\newcommand{\R}{\mathbb{R}}
\theoremstyle{plain}
\newtheorem{theorem}{Theorem}
\newtheorem{lemma}[theorem]{Lemma}
\newtheorem{definition}[theorem]{Definition}
\newtheorem{corollary}[theorem]{Corollary}
\newtheorem{prop}[theorem]{Proposition}
\theoremstyle{remark}
\newtheorem{remark}[theorem]{Remark}
\newtheorem{example}[theorem]{Example}
 \author[A. Ciomaga]{Adina Ciomaga$^\ddag$}
\address{$^\ddag$Centre de Math\'ematiques et de Leurs
  Applications, Ecole Normale Sup\'erieure de Cachan, CNRS, UniverSud,
  61 avenue du pr\'esident Wilson, F-94230 Cachan, France}
\email{ciomaga@cmla.ens-cachan.fr}
\title[On the Strong Maximum Principle for PIDEs]{On the Strong Maximum Principle for Second Order Nonlinear Parabolic Integro-Differential Equations}
\begin{document}
\graphicspath{{./figures/}}

\begin{abstract}
This paper is concerned with the study of the Strong Maximum Principle for semicontinuous viscosity solutions of fully nonlinear, second-order parabolic integro-differential equations. 
We study separately the propagation of maxima in the horizontal component of the domain and the local vertical propagation in simply connected sets of the domain. 
We give two types of results for horizontal propagation of maxima: one is the natural extension of the classical results of local propagation of maxima and the other  comes from the structure of the nonlocal operator.
As an application, we use the Strong Maximum Principle to prove a Strong Comparison Result of viscosity sub and supersolution for integro-differential equations.
\end{abstract} \medskip

\maketitle

{\textbf{Keywords:} nonlinear parabolic integro-differential equations, strong maximum principle, viscosity solutions \medskip}
 
{\textbf{AMS Subject Classification:} 35R09,  35K55 , 35B50, 35D40 \medskip}

\bigskip\section{Introduction}\bigskip

We investigate the Strong Maximum Principle for viscosity solutions of second-order non-linear parabolic integro-differential equations of the form

\begin{equation}\label{ePIDEs}
 u_t + F(x,t,Du,D^2u,\mathcal{I}[x,t,u]) =0 \hbox{ in } \Omega\times (0,T)
\end{equation}
where $\Omega\subset\R^N$ is an open bounded set, $T>0$ and $u$ is a real-valued function defined on $\R^N\times[0,T]$.
The symbols $u_t$, $Du$, $D^2u$ stand for the derivative with respect to time, respectively the gradient 
and the Hessian matrix with respect to $x$. $\mathcal{I}[x,t,u]$ is an integro-differential operator, 
taken on the whole space $\R^N$.  Although the nonlocal operator is defined on the whole space, 
we consider equations on a bounded domain $\Omega$. Therefore, we assume that the function 
$u = u(x,t)$ is a priori defined outside the domain $\Omega$. The choice corresponds to prescribing 
the solution in $\Omega^c\times(0,T)$, as for example in the case of Dirichlet boundary conditions.\smallskip

The nonlinearity $F$ is a real-valued, continuous function in $\Omega\times [0,T]\times\R^N\times\mathbb{S}^N\times\R$, ($\mathbb{S}^N$ being the set of real symmetric $N\times N$ matrices) and \emph{degenerate elliptic}, i.e.
\begin{equation}\label{Ell}
F(x,t,p,X,l_1)\leq  F(x,t,p,Y,l_2) \hbox{ if } X\geq Y, \ l_1\geq l_2,
\end{equation}
for all $(x,t)\in\overline{\Omega}\times [0,T]$, $p\in\R^N\setminus\{0\}$, $X,Y\in\mathbb{S}^N$ and $l_1,l_2\in\R$.\smallskip

Throughout this work, we consider integro-differential operators of the type
\begin{equation}\label{NL_op}
 \mathcal{I}[x,t,u]=\int_{\R^N} \left(u(x+z,t)-u(x,t)-Du(x,t)\cdot z 1_B(z)\right)\mu_x(dz)
\end{equation}
where $1_B(z)$ denotes the indicator function of the unit ball $B$ and $\{\mu_x\}_{x\in\Omega}$ is a family of L\'evy measures, 
i.e. non-negative, possibly singular, Borel measures on $\Omega$ such that
$$\sup_{x\in\Omega}\int_{\R^N}\min(|z|^2,1)\mu_x(dz)<\infty.$$
In particular, L\'evy-It\^o operators are important special cases of nonlocal operators and are defined as follows
\begin{equation}\label{LI_op}
 \mathcal{J}[x,t,u]=\int_{\R^N}\left(u(x+j(x,z),t)-u(x,t)-Du(x,t)\cdot j(x,z) 1_B(z)\right)\mu(dz)
\end{equation}
where $\mu$ is a L\'evy measure and  $j(x,z)$ is the size of the jumps at $x$ satisfying
$$|j(x,z)|\leq C_0 |z|, \ \forall x\in\Omega, \forall z\in\R^N$$
with $C_0$ a positive constant. \smallskip

We denote by $USC(\R^N\times[0,T])$ and $LSC(\R^N\times[0,T])$ the set of respectively upper and lower semi-continuous functions in $\R^N\times[0,T]$. 
By Strong Maximum for equation (\ref{ePIDEs}) in an open set $\Omega\times(0,T)$ we mean the following.   \smallskip

\medskip\noindent
SMaxP: \emph{any $u\in USC(\R^N\times[0,T])$ viscosity subsolution of (\ref{ePIDEs}) that attains a maximum at $(x_0,t_0)\in\Omega\times(0,T)$ is constant in $\Omega\times[0,t_0]$}. \medskip

The Strong Maximum Principle follows from the horizontal and vertical propagation of maxima, that we study separately.
By horizontal propagation of maxima we mean the following:
if the maximum is attained at some point $(x_0,t_0)$ then the function becomes constant in the connected component of the domain $\Omega\times\{t_0\}$ which contains the point $(x_0,t_0)$. By local vertical propagation we understand that if the maximum is attained at some point $(x_0,t_0)$ then at any time $t<t_0$ one can find another point $(x,t)$ where the maximum is attained. This will further imply the propagation of maxima in the region $\Omega\times (0, t_0)$. \smallskip

\begin{figure}[h!]
\centering
\includegraphics[width=0.95\linewidth]{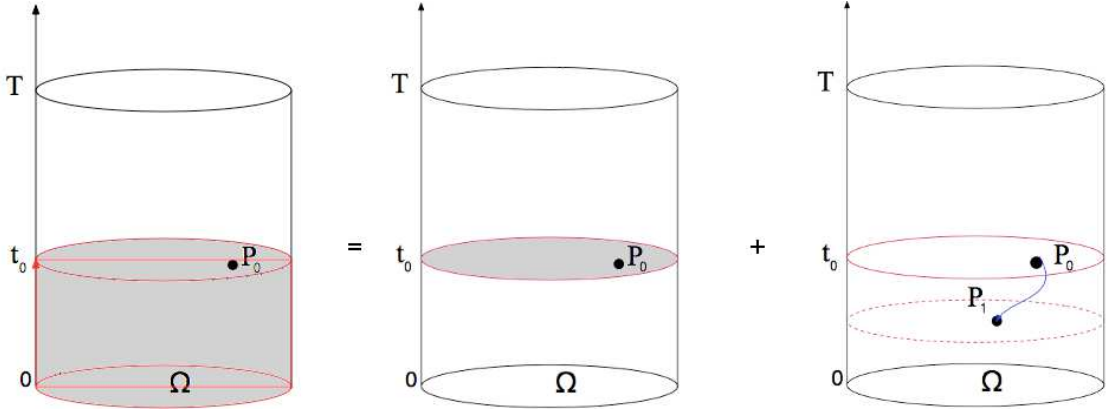} 
\caption{\small Strong Maximum Principle follows from the horizontal and vertical propagation of maxima.}
\label{fig::smp}
\end{figure}

We set $Q_T = \Omega \times (0, T ]$ and for any point $P_0 = (x_0 , t_0 ) \in Q_T$, we denote by $S(P_0)$ the set of all points $Q \in Q_T$ which can be connected to $P_0$ by a simple continuous curve in $Q_T$ and by $C(P_0)$ we denote the connected component of $\Omega\times\{ t_0 \}$ which contains $P_0$. \smallskip

The horizontal propagation of maxima in $C(P_0)$ requires two different perspectives. An almost immediate result follows from the structure of the nonlocal operator. More precisely, we show that \textit{Strong Maximum Principle holds for PIDEs involving nonlocal operators in the form (\ref{NL_op}) whenever the whole domain (not necessarily connected) can be covered by translations of measure supports, starting from a maximum point.} This is the case for example of a pure nonlocal diffusion
\begin{equation*}
u_t - \mathcal{I}[x,t,u]=0 \hbox{ in } \R^N \times (0,T)
\end{equation*}
where $\mathcal{I}$ is an isotropic L\'evy operator of form (\ref{NL_op}), integrated against the L\'evy measure associated with the fractional Laplacian $(-\Delta)^{\beta/2}, \beta\in(0,2)$:
$$ \mu(dz) = \frac{dz}{|z|^{N+\beta}}. $$
The result is the natural extension to PIDEs of the maximum principle for nonlocal 
operators generated by nonnegative kernels obtained by Coville in \cite{C:08:SMaxNL}.\smallskip

Nevertheless, there are equations for which maxima do not propagate just by translating measure supports, such as pure nonlocal equations with nonlocal terms associated with the fractional Laplacian, but whose measure supports are defined only on half space. \emph{Mixed integro-differential equations}, i.e. equations for which local diffusions occur only in certain directions and nonlocal diffusions on the orthogonal ones cannot be handled by simple techniques, as they might be degenerate in both local or nonlocal terms but the overall behavior might be driven by their interaction (the two diffusions cannot cancel simultaneously).  We have in mind equations of the type
\begin{equation*}
 u_t  - \mathcal{I}_{x_1}[u] - \frac{\partial^2u}{\partial x_2^2}= 0 \hbox{ in } \R^2\times (0,T)
\end{equation*}
for $x=(x_1,x_2)\in\R^2$. 
The diffusion term gives the ellipticity in the direction of $x_2$, while the nonlocal term gives it in the direction of $x_1$ 
$$
\mathcal{I}_{x_1}[u] = \int_{\R} \left(u(x_1+z_1,x_2)-u(x)-\frac{\partial u}{\partial x_1}(x)\cdot z_1 1_{[-1,1]}(z_1)\right)\mu_{x_1}(dz_1)
$$
where $\{\mu_{x_1}\}_{x_1}$ is a family of L\'evy measures.
However, we manage to show that under some nondegeneracy and scaling assumptions on the nonlinearity $F$, 
\textit{if a viscosity subsolution attains a maximum at $P_0=(x_0,t_0)\in Q_T$, 
then $u$ is constant (equal to the maximum value) in the horizontal component $C(P_0)$.}\smallskip

We then prove the local propagation of maxima in the cylindrical region 
$\Omega\times(0,T]$ and thus extend to parabolic integro-differential equations 
the results obtained by Da Lio in \cite{daLio:04:SMaxP} and Bardi and Da Lio 
in \cite{BarLio:01:SMaxP1} and \cite{BarLio:03:SMaxP2} for fully nonlinear 
degenerate elliptic convex and concave Hamilton Jacobi operators. For helpful details of 
Strong Maximum Principle results for Hamilton Jacobi equations we refer to 
\cite{Barles:94:ViscositeHJ}. Yet, it is worth mentioning that Strong Maximum 
Principle for linear elliptic equations goes back to Hopf in the 20s and to 
Nirenberg, for parabolic equations \cite{Nirenberg:53:SMaxP}. 
\smallskip

In the last part we use Strong Maximum Principle to prove a Strong Comparison Result of viscosity sub and supersolution for integro-differential equations of the form (\ref{ePIDEs}) with the Dirichlet boundary condition
\begin{equation*}
 u = \varphi \hbox{ on } \Omega^c\times [0,T]
\end{equation*}
where $\varphi$ is a continuous function. \smallskip

Nonlocal equations find many applications in mathematical finance and occur in the theory of L\'evy jump-diffusion processes. 
The theory of viscosity solutions has been extended for a rather long time to Partial Integro-Differential Equations (PIDEs). 
Some of the first papers are due to Soner \cite{Soner:86:OC1}, \cite{Soner:86:OC2}, 
in the context of stochastic control jump diffusion processes. 
Following his work, existence and comparison results of solutions for 
\emph{first order PIDEs} were given by Sayah in \cite{Sayah:91:HJID1} and \cite{Sayah:91:HJID2}.\smallskip

\emph{Second-order degenerate PIDEs} are more complex and required careful studies, 
according to the nature of the integral operator (often reflected in the singularity 
of the L\'evy measure against which they are integrated). When these equations involve 
\emph{bounded integral operators}, general existence and comparison results for 
semi-continuous and unbounded viscosity solutions were found by Alvarez and Tourin 
\cite{AlvTou:96:ViscPIDE}. Amadori extended the existence and uniqueness results to a 
class of Cauchy problems for integro-differential equations, starting with initial data 
with exponential growth at infinity \cite{Amadori:03:viscPIDEs} and proved a local Lipschitz regularity result.\smallskip

Systems of parabolic integro-differential equations dealing with \emph{second order nonlocal operators} 
were connected to backwards stochastic differential equations in \cite{BarlesBuckPard:97:SDE_PIDEs} 
and existence and comparison results were established. Pham connected the optimal stopping time 
problem in a finite horizon of a controlled jump diffusion process with a parabolic PIDE in 
\cite{Pham:98:OCJDPviscosity} and proved existence and comparison principles of uniformly 
continuous solutions. Existence and comparison results were also provided by Benth, Karlsen and 
Reikvam in \cite{BeKaRe:01:OptimalSelectionPIDEs} where a singular stochastic control problem 
is associated to a nonlinear second-order degenerate elliptic integro-differential equation 
subject to gradient and state constraints, as its corresponding Hamilton-Jacobi-Bellman equation.\smallskip

Jakobsen and Karlsen in \cite{JakoKarl:06:MPIDE} used the original approach due to Jensen \cite{Jensen:88:MaxP_PDEs}, Ishii \cite{Ishii:89:UE_viscosityPDEs}, Ishii and Lions \cite{IL:90:ViscPDE}, Crandall and Ishii \cite{CrandallIshii:90:MaxP} and Crandall, Ishii and Lions \cite{CIL:92:UsersGuide} for proving comparison results for 
viscosity solutions of nonlinear degenerate elliptic integro-partial differential equations with second 
order nonlocal operators. Parabolic versions of their main results were given in \cite{JakoKarl:05:ContDep_PIDEs}. 
They give an analogous of Jensen-Ishii's Lemma, a keystone for many comparison principles, but they are restricted 
to \emph{subquadratic solutions}. \smallskip

The viscosity theory for general PIDEs has been recently revisited and extended to 
\emph{solutions with arbitrary growth at infinity} by Barles and Imbert \cite{BI:08:ViscNL}. 
The authors provided as well a variant of Jensen Ishii's Lemma for general integro-differential equations. 
The notion of viscosity solution generalizes the one introduced by Imbert in \cite{Imbert:05:NLreg} 
for first-order Hamilton Jacobi equations in the whole space and Arisawa in \cite{Arisawa:06:ViscosityPIDEs}, 
\cite{Arisawa:06:ViscosityPIDEsCor} for degenerate integro-differential equations on bounded domains.\smallskip

The paper is organized as follows. In section \S\ref{sec:StrongMaxPrinc} 
we study separately the propagation of maxima in $C(P_0)$ and in the region $\Omega\times (0, t_0)$. 
In section \S\ref{sec:StrongMaxPrinc_LI} similar results are given for L\'evy It\^o operators. 
Examples are provided in section \S\ref{sec:examples}. In section \S\ref{sec:StrongCompPrinc} 
we prove a Strong Comparison Result for the Dirichlet Problem, based on the Strong Maximum Principle for the linearized equation.
\smallskip

\bigskip
\section{Strong Maximum Principle - General Nonlocal Operators}\label{sec:StrongMaxPrinc}

The aim of this section is to prove the local propagation of maxima of viscosity solutions of (\ref{ePIDEs}) in the cylindrical region $Q_T$. 
As announced, we study separately the propagation of maxima in the horizontal domains $\Omega\times\{t_0\}$ and the local vertical propagation in regions $\Omega\times(0,t_0)$. Each case requires different sets of assumptions. \smallskip

In the sequel, we refer to integro-differential equations of the form (\ref{ePIDEs})
where the function $u$ is a priori given outside $\Omega$. Assume that $F$ satisfies 

\begin{itemize}
\item [$(E)$] $F$ is continuous in $\Omega\times [0,T]\times\R^N\times\mathbb{S}^N\times\R$ and degenerate elliptic.
\end{itemize} 

\noindent Results are presented for general nonlocal operators 
\begin{eqnarray}\nonumber
 \mathcal{I}[x,t,u]=\int_{\R^N} (u(x+z,t)-u(x,t)-Du(x,t)\cdot z 1_B(z))\mu_x(dz)
\end{eqnarray}
where $\{\mu_x\}_{x\in\Omega}$ is a family of L\'evy measures.
We assume it satisfies assumption  
\begin{itemize}
\item [($M$)]  there exists a constant $\tilde C_\mu>0$ such that, for any $x\in\Omega$, 
$$
\int_B|z|^2\mu_x(dz)+\int_{\R^N\setminus B}\mu_x(dz)\leq \tilde C_\mu.
$$ 
\end{itemize}\smallskip

To overcome the difficulties imposed by the behavior at infinity of the measures $(\mu_x)_x$, we often need to split the nonlocal term into 
\begin{eqnarray}
\nonumber
 \mathcal{I}^1_\delta [x,t,u] & = &\int_{|z|\leq \delta}(u(x+z,t)-u(x,t)-Du(x,t)\cdot z 1_B(z))\mu_x(dz) \\
\nonumber
 \mathcal{I}^2_\delta [x,t,p,u]& = &\int_{|z|>\delta}(u(x+z,t)-u(x,t)-p\cdot z 1_B(z))\mu_x(dz)
\end{eqnarray}

with $0<\delta<1$ and $p\in\R^N$. \smallskip

There are several equivalent definitions of viscosity solutions, but we will mainly refer to the following one.\smallskip

\begin{definition}[Viscosity solutions]
An usc function $u:\R^N\times[0,T]\rightarrow\R$ is a \emph{subsolution} of (\ref{ePIDEs}) 
if for any $\phi\in C^2(\R^N\times[0,T])$ such that $u-\phi$ attains a global maximum at $(x,t)\in\Omega\times (0,T)$  
$$
\phi_t(x,t) +  F(x,t,\phi(x,t),D\phi(x,t),D^2\phi(x,t), \mathcal{I}^1_\delta[x,t,\phi]+\mathcal{I}^2_\delta[x,t,D\phi(x,t),u])\leq 0.
$$
A lsc function $u:\R^N\times[0,T]\rightarrow\R$ is a \emph{supersolution} of (\ref{ePIDEs}) 
if for any test function $\phi\in C^2(\R^N\times[0,T])$ such that $u-\phi$ attains a global minimum at $(x,t)\in\Omega\times(0,T)$ 
$$
\phi_t(x,t) +  F(x,t,\phi(x,t),D\phi(x,t),D^2\phi(x,t), \mathcal{I}^1_\delta[x,t,\phi]+\mathcal{I}^2_\delta[x,t,D\phi(x,t),u])\geq 0.
$$
\end{definition}

\medskip\subsection[Horizontal Propagation - Translations of Measure Supports]
{Horizontal Propagation of Maxima by Translations of Measure Supports}

Maximum principle results for nonlocal operators generated by nonnegative kernels 
defined on topological groups acting continuously on a Hausdorff space were settled out 
by Coville in \cite{C:08:SMaxNL}. In the following, we present similar results for integro-differential 
operators in the setting of viscosity solutions.\smallskip
 
It can be shown that Maximum Principle holds for nonlocal operators given by (\ref{NL_op}) 
whenever the whole domain can be covered by translations of measure supports, starting from a maximum point, as suggested in Figure \ref{fig::translations}. 

An additional assumption is required with respect to the nonlinearity $F$. More precisely we require that 
\begin{itemize}
\item [$(E')$] $F$ is continuous, degenerate elliptic and for $x,p\in\R^N$ and $l\in\R$
\begin{equation*}
F(x,t,0,O,l)\leq 0 \Rightarrow l\geq 0. 
\end{equation*}
\end{itemize} 

For the sake of precision, the following result is given for integro-differential equations defined in $\R^N$. We explain in Remark \ref{rk::Omega} what happens when we restrict to some open set $\Omega$.

\begin{theorem}\label{th::trans_supp}
Assume the family of measures $\{\mu_x\}_{x\in\Omega}$ satisfies assumption $(M)$. Let $F$ satisfy $(E')$ in $\R^N\times [0,T]$ and $u\in USC(\R^N\times[0,T])$ be a viscosity subsolution of (\ref{ePIDEs}) in $\R^N\times (0,T)$. If $u$ attains a global maximum at $(x_0,t_0)\in\R^N\times(0,T)$, then $u(\cdot,t_0)$ is constant on $\overline{\bigcup_{n\geq 0}A_n},$ with
\begin{equation}\label{trans_supp}
A_0 = \{x_0\}, \hspace{0.2cm} A_{n+1} = \bigcup_{x\in A_{n}}(x + \hbox{supp}(\mu_x)).
\end{equation}
\end{theorem}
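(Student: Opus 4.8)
The plan is to isolate a one-step propagation lemma and then iterate it. The lemma reads: \emph{if $u$ attains its global maximum value $M := u(x_0,t_0)$ at a point $(\bar x,t_0)$ with $\bar x \in \R^N$, then $u(\cdot,t_0) \equiv M$ on $\bar x + \mathrm{supp}(\mu_{\bar x})$}. Granting the lemma, the theorem follows by induction on $n$: the case $n=0$ is the hypothesis since $A_0 = \{x_0\}$, and if $u(\cdot,t_0)\equiv M$ on $A_n$ then every point of $A_n$ is a global maximum point, so applying the lemma at each $x \in A_n$ and taking the union yields $u(\cdot,t_0)\equiv M$ on $A_{n+1} = \bigcup_{x\in A_n}(x+\mathrm{supp}(\mu_x))$. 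Consequently $u(\cdot,t_0)\equiv M$ on $\bigcup_{n\ge 0}A_n$, and then on its closure because $u$ is upper semicontinuous and $u\le M$.

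To prove the lemma I would test the subsolution inequality at $(\bar x,t_0)$ with the constant function $\phi\equiv M$. Since $u\le M$ everywhere with equality at $(\bar x,t_0)\in\R^N\times(0,T)$, the difference $u-\phi$ has a global maximum there, so $\phi$ is an admissible test function. All derivatives of $\phi$ vanish, so $\mathcal{I}^1_\delta[\bar x,t_0,\phi] = 0$ for every $\delta\in(0,1)$, while $\mathcal{I}^2_\delta[\bar x,t_0,0,u] = \int_{|z|>\delta}\bigl(u(\bar x+z,t_0)-M\bigr)\,\mu_{\bar x}(dz) =: L_\delta$, which is $\le 0$ because the integrand is nonpositive. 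The subsolution inequality therefore reduces, for each $\delta\in(0,1)$, to $F(\bar x,t_0,0,O,L_\delta)\le 0$, and assumption $(E')$ forces $L_\delta\ge 0$; hence $L_\delta = 0$. Since the integrand is $\le 0$, this means $u(\bar x+z,t_0) = M$ for $\mu_{\bar x}$-almost every $z$ with $|z|>\delta$, and letting $\delta = 1/k \downarrow 0$ along a sequence and taking a countable union, $u(\bar x+z,t_0) = M$ for $\mu_{\bar x}$-a.e.\ $z\neq 0$.

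It remains to upgrade this almost-everywhere statement to one that holds at every point of $\mathrm{supp}(\mu_{\bar x})$. The point is that the set $O := \{z : u(\bar x+z,t_0) < M\}$ is open, because $u(\cdot,t_0)$ is upper semicontinuous, and it is $\mu_{\bar x}$-null by the previous step (note $0\notin O$); by the very definition of the support of a Borel measure, an open null set is disjoint from $\mathrm{supp}(\mu_{\bar x})$, whence $\mathrm{supp}(\mu_{\bar x})\subset\{z : u(\bar x+z,t_0) = M\}$, which is the lemma.

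The admissibility of the constant test function and the measure-theoretic facts are routine; the only point deserving a word is that $L_\delta$ is a genuine real number, which is clear since it lies in $(-\infty,0]$ — it cannot be $-\infty$, for otherwise the subsolution inequality would fail by the contrapositive of $(E')$ (alternatively, the viscosity framework in use already requires the nonlocal term to be finite). I expect the brief passage from ``a.e.\ on the support'' to ``everywhere on the support'', which combines the upper semicontinuity of $u(\cdot,t_0)$ with the topological meaning of $\mathrm{supp}$, to be the conceptual heart of the argument; the remainder is the iteration and the closure step.
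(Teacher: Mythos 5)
Your proposal follows the same route as the paper: test the subsolution inequality at the maximum with a constant test function, invoke $(E')$ to force the nonlocal term $\mathcal{I}^2_\delta$ to be nonnegative, combine with the fact that the integrand is nonpositive to conclude equality, and then iterate over the sets $A_n$ and pass to the closure by upper semicontinuity. The one place where you are noticeably more careful than the paper is the passage from $L_\delta = 0$ to equality on the support: the paper simply asserts ``letting $\delta$ go to zero we have $u(z,t_0) = u(x_0,t_0)$ for all $z\in x_0+\mathrm{supp}(\mu_{x_0})$,'' whereas you correctly observe that the vanishing of $\int_{|z|>\delta}(u(\bar x+z,t_0)-M)\,\mu_{\bar x}(dz)$ only yields $u(\bar x+\cdot,t_0)=M$ $\mu_{\bar x}$-a.e., and that the upgrade to \emph{every} point of $\mathrm{supp}(\mu_{\bar x})$ requires the topological argument: the set $\{z : u(\bar x+z,t_0)<M\}$ is open by upper semicontinuity and $\mu_{\bar x}$-null, hence disjoint from the support. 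Spelling this out is a genuine improvement in rigor, not a change of method. Your remark on the finiteness of $L_\delta$ is also appropriate, though as you note it is ultimately built into the viscosity framework.
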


\begin{proof}
Assume that $u$ is a viscosity subsolution for the given equation. 
Consider the test-function $\psi \equiv 0$ and write the viscosity inequality at point $(x_0,t_0)$
$$
F(x_0,t_0,0,O,\mathcal{I}^1_\delta[x_0,t_0,\psi]+\mathcal{I}^2_\delta[x_0,t_0,D\psi(x_0,t_0),u])\leq 0.
$$
This implies according to assumption $(E')$, that
$$
\mathcal{I}^2_\delta[x_0,t_0,u]= \int_{|z|\geq \delta} (u(x_0+z,t_0)-u(x_0,t_0))\mu_{x_0}(dz)\geq 0.
$$
But $u$ attains its maximum at $(x_0,t_0)$ and thus
$
u(x_0+z,t_0)-u(x_0,t_0))\leq 0.
$
Letting $\delta$  go to zero we have
$$
u(z,t_0) = u(x_0,t_0), \hbox{ for all } z\in x_0+\hbox{supp} (\mu_{x_0}).
$$
Arguing by induction, we obtain
$$
u(z,t_0) = u(x_0,t_0), \forall z\in \bigcup_{n\geq 0}A_n. 
$$
Take now $z_0\in\overline{\bigcup_{n\geq 0}A_n}$. Then, there exists a sequence of points $(z_n)_n\subset\bigcup_{n\geq 0}A_n$ converging to $z_0$. Since $u$ is upper semicontinuous, we have
$$
u(z_0,t_0)\geq \limsup_{z_n\rightarrow z_0} u(z_n,t_0) = u(x_0,t_0).
$$
But $(x_0,t_0)$ is a maximum point and the converse inequality holds. Therefore
$$
u(z,t_0) = u(x_0,t_0), \forall z\in\overline{\bigcup_{n\geq 0}A_n}.
$$
\end{proof}

\begin{remark}
In particular when $\hbox{supp}(\mu_x)=\hbox{supp}(\mu)= B$, with $\mu$ being a L\' evy measure and $B$ the unit ball,
$\R^N$ can be covered by translations of $\hbox{supp}(\mu)$ starting at $x_0$
$$
\R^N = x_0 + \bigcup_{n\geq0}\big(\underbrace{\hbox{supp}(\mu)+ ... +\hbox{supp}(\mu)}_{n} \big).
$$ 
and thus $u(\cdot,t_0)$ is constant in $\R^N$.
\end{remark}

\begin{remark}\label{rk::Omega}
Whenever the equation is restricted to $\Omega$, with the corresponding Dirichlet condition outside the domain, then iterations must be taken for all the points in $\Omega$, i.e. $$ A_{n+1} = \bigcup_{x\in\Omega\cap A_{n}}(x + \hbox{supp}(\mu_x))$$
In particular, if $\Omega\subset\overline{\bigcup_{n\geq 0}A_n}$, then $u(\cdot,t_0)$ is constant in $\Omega$.

\begin{figure}
\centering
 \includegraphics[width=8cm]{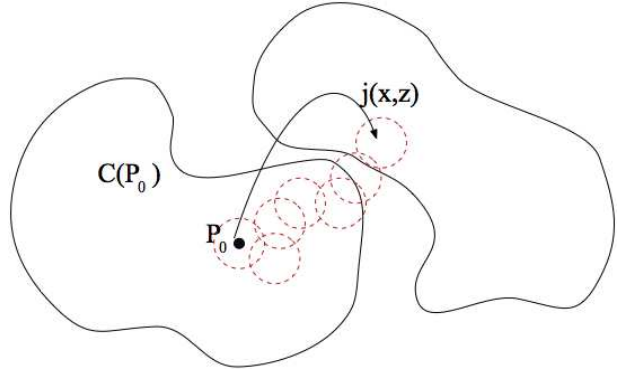}
\caption{Horizontal propagation of maxima by translations of measure supports.}
\label{fig::translations}
\end{figure}

\end{remark}

\begin{remark}
The domain $\Omega$ may not necessarily be connected and  still maxima might propagate, since jumps from one connected component to another might occur when measure supports overlap two or more connected components.
\end{remark}

The previous result has an immediate corollary. If all measure supports have nonempty (topological) interior and contain the origin, strong maximum principle holds.

\begin{corollary}\label{th::int_supp}
Let $\Omega$ be connected, $F$ be as before and $u\in USC(\R^N\times[0,T])$ be a viscosity subsolution of (\ref{ePIDEs}) in  $\Omega\times (0,T)$.
Assume that $\{\mu_x\}_{x\in\Omega}$ satisfies $(M)$ and in addition that the origin belongs to the topological interiors of all measure supports
\begin{equation}\label{int_supp}
 0\in\mathring{\widehat{\hbox{supp}(\mu_x)}},  \forall x\in\Omega.
\end{equation}
If the solution $u$ attains a global maximum at $(x_0,t_0)\in\Omega\times(0,T)$, then $u(\cdot,t_0)$ is constant in the whole domain $\Omega$.
\end{corollary}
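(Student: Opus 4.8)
The plan is to promote the translation-of-supports mechanism of Theorem~\ref{th::trans_supp} into a connectedness argument. The key observation is that assumption~\eqref{int_supp} forces, for each $x\in\Omega$, the existence of $r_x>0$ with $B(0,r_x)\subset\hbox{supp}(\mu_x)$; hence a single ``jump'' from $x$ already covers a whole ball, $B(x,r_x)\subset x+\hbox{supp}(\mu_x)$. So once we know that some point of $\Omega$ is a maximum point of $u(\cdot,t_0)$, the maximum propagates at once to a full neighbourhood of it inside $\Omega$, and the connectedness of $\Omega$ will then do the rest.

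Concretely, I would introduce the contact set $M:=\{x\in\R^N:\ u(x,t_0)=u(x_0,t_0)\}$. Since $u$ is usc and $(x_0,t_0)$ is a global maximum of $u$ on $\R^N\times[0,T]$, we have $M=\{x:\ u(x,t_0)\ge u(x_0,t_0)\}$, which is closed, and $x_0\in M$. The main step is to show that $M\cap\Omega$ is open in $\Omega$. Fix $y\in M\cap\Omega$: then $(y,t_0)\in\Omega\times(0,T)$ is again a global maximum of $u$, so testing the subsolution inequality with the test function $\psi\equiv0$ and using $(E')$ --- exactly as in the proof of Theorem~\ref{th::trans_supp} --- gives $\int_{|z|>\delta}(u(y+z,t_0)-u(y,t_0))\,\mu_y(dz)\ge0$. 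The integrand being $\le0$, it vanishes $\mu_y$-a.e.\ on $\{|z|>\delta\}$; since $\{z:\ u(y+z,t_0)<u(y,t_0)\}$ is open (upper semicontinuity again), it cannot meet $\hbox{supp}(\mu_y)$ without carrying positive $\mu_y$-mass, so $u(y+z,t_0)=u(y,t_0)$ for every $z\in\hbox{supp}(\mu_y)$ with $|z|>\delta$. Letting $\delta\downarrow0$ we get $y+\hbox{supp}(\mu_y)\subset M$, hence $B(y,r_y)\subset M$; since $\Omega$ is open, a smaller ball around $y$ sits in $M\cap\Omega$.

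Finally, $M\cap\Omega$ is closed in $\Omega$ (because $M$ is closed in $\R^N$) and nonempty (it contains $x_0$), so by connectedness of $\Omega$ it equals $\Omega$; that is, $u(\cdot,t_0)\equiv u(x_0,t_0)$ on $\Omega$, which is the claim. The point that needs the most care is the passage in the second step from $\mu_y$-almost-everywhere equality to equality on the entire support of $\mu_y$: this is precisely where upper semicontinuity of $u$ and the definition of the support of a measure are used. Everything else is the bookkeeping already contained in Theorem~\ref{th::trans_supp}, combined with the topological fact that a nonempty clopen subset of a connected set is the whole set.
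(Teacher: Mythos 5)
Your proof is correct and follows essentially the same route as the paper: both define the contact set (the paper calls it the iso-level $\Gamma_{x_0}$), show it is simultaneously open (via the translation-of-supports mechanism from Theorem~\ref{th::trans_supp} combined with \eqref{int_supp}) and closed (via upper semicontinuity), and conclude by connectedness of $\Omega$. The only cosmetic difference is that you re-derive the support-translation step inline and work with the set $M\subset\R^N$ rather than directly with $\Gamma_{x_0}\subset\Omega$, but the mathematics is identical.
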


\begin{proof}
Consider the iso-level 
\begin{equation*}
\Gamma_{x_0}=\{x\in\Omega; u(x,t_0)=u(x_0,t_0)\}.
\end{equation*}
Then the set is simultaneously open since $ 0\in\mathring{\widehat{\hbox{supp}(\mu_x)}}$ implies, by Theorem \ref{th::trans_supp}, together with Remark \ref{rk::Omega} that for any $x \in \Gamma_{x_0}$ we have 
$$\big (x+ \mathring{\widehat{\hbox{supp}(\mu_x)}}\big) \cap \Omega\subset \Gamma_{x_0}$$ 
and closed because for any $x\in\bar\Gamma_{x_0}$ we have by the upper-semicontinuity of $u$
$$u(x,t_0) \geq \limsup_{y\rightarrow x,\space\ y\in\Gamma_{x_0}} u(y,t_0) = \max_{y\in \Omega} u(y,t_0)  $$
thus $u(x,t_0) = u(x_0,t_0).$
Therefore, $\Gamma_{x_0} = \Omega$ since $\Omega$ is connected and this completes the proof.
\end{proof}

\medskip\subsection[Horizontal Propagation - Nondegeneracy Conditions]
{Horizontal Propagation of Maxima under Nondegeneracy Conditions}
There are cases when conditions (\ref{trans_supp}) and (\ref{int_supp}) fail, 
such as measures whose supports are contained in half space or nonlocal terms acting in one direction, 
as we shall see in section  \S\ref{sec:examples}. 
\medskip

However, we manage to show that, if a viscosity subsolution attains a maximum at $P_0=(x_0,t_0)\in Q_T$, then the maximum propagates in the horizontal component $C(P_0)$, as shown in Figure \ref{fig::smp}. This result is based on \emph{nondegeneracy} $(N)$ and \emph{scaling} $(S)$ properties on the nonlinearity $F$:  

\begin{itemize}
\item [($N$)]  For any $\bar x \in \Omega $ and $0<t_0<T$ there exist $R_0>0$ small enough and $0\leq\eta<1$ such that for any $0 < R < R_0$ and $c >0$
	        $$ F(x,t,p,I-\gamma p\otimes p,\tilde C_\mu - 
	     	c\gamma\int_{\mathcal{C}_{\eta,\gamma}(p)}\big|p\cdot z\big|^2 \mu_{x} (dz))\rightarrow +\infty \hbox{ as } \gamma\rightarrow +\infty$$
	     	uniformly for $ |x- \bar x| \leq R$ and $|t-t_0| \leq R$, $R/2 \leq |p| \leq R$,
		where $$\mathcal{C}_{\eta,\gamma}(p) = \{z; \space\ (1-\eta)|z||p|\leq|p\cdot z|\leq 1/\gamma\}$$
		and $\tilde C_\mu$ appears in $(M)$.  

\item [($S$)] There exist some constants $R_0>0$, $\varepsilon_0>0$  and $\gamma_0>0$
		s.t. for all $0<R<R_0, \varepsilon<\varepsilon_0 \hbox{ and } \gamma\geq\gamma_0$
		the following condition holds for all $ |x- \bar x| \leq R \hbox{ and } |t-t_0| \leq R\hbox{ and } R/2 \leq |p| \leq R$
		\begin{eqnarray}
		 && \nonumber F(x,t,\varepsilon p,\varepsilon (I-\gamma p\otimes p),\varepsilon l)\geq
		 \varepsilon F(x,t,p,I-\gamma p\otimes p,l).
		\end{eqnarray}
\end{itemize} 

As we shall see in \S\ref{sec:examples} the assumption $(M)$ which states that the measure $\mu_x$ is bounded at infinity, uniformly with respect to $x$ and the possible singularity at the origin is of order $|z|^2$ is not sufficient to ensure condition $(N)$. The following assumption is in general needed, provided that the nonlinearity $F$ is nondegenerate in the nonlocal term.

\begin{itemize}
\item [($M^c$)] {For any $x\in\Omega$ there exist $1<\beta<2$, $0\leq\eta<1$ and a constant $C_\mu(\eta)>0$ such that the following holds with $\mathcal C_{\eta,\gamma}(p)$ as before
$$ \int_{\mathcal C_{\eta,\gamma}(p)}|z|^2 \mu_x(dz)\geq C_\mu(\eta)\gamma^{\beta-2}, \forall \gamma\geq 1.$$
}
\end{itemize} 
As pointed out in section \S\ref{sec:examples}, ($M^c$) holds for a wide class of L\'evy measures as well as $(N)-(S)$ for a class of nonlinearities $F$. 
		
\begin{theorem}\label{th::horizprop}
Assume the family of measures $\{\mu_x\}_{x\in\Omega}$ satisfies assumptions $(M)$. Let $u\in USC(\R^N\times[0,T])$ be a viscosity subsolution of (\ref{ePIDEs}) that attains a global maximum at $P_0=(x_0,t_0)\in Q_T$. If $F$ satisfies $(E), (N),$ and $(S)$ then u is constant in $C(P_0)$. 
\end{theorem}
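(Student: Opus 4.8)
The plan is to fix a time $t_0$ and a point $x_0 \in \Omega$ where the global maximum $M = u(x_0,t_0)$ is attained, and to show that the horizontal iso-level set $\Gamma = \{x \in C(P_0) : u(x,t_0) = M\}$ is both open and closed in $C(P_0)$; since $C(P_0)$ is connected by definition, this forces $\Gamma = C(P_0)$. Closedness is immediate from upper semicontinuity of $u$ together with the fact that $(x_0,t_0)$ is a \emph{global} maximum, exactly as in the closedness arguments of Theorem \ref{th::trans_supp} and Corollary \ref{th::int_supp}. So the entire content is in proving that $\Gamma$ is open, i.e. that if $u(\bar x, t_0) = M$ then $u(\cdot, t_0) \equiv M$ on a full neighbourhood of $\bar x$ inside $\Omega$.

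First I would set up the standard barrier argument by contradiction: suppose $\bar x \in \Gamma$ but there are points arbitrarily close to $\bar x$ where $u(\cdot,t_0) < M$. Working in a small space-time ball $Q_{R}(\bar x, t_0) = \{|x - \bar x| \le R, |t - t_0| \le R\}$ with $R < R_0$ (from $(N)$ and $(S)$), I would construct a smooth test function of the form $\phi(x,t) = M - \varepsilon\, w(x,t)$, where $w$ is built to be a strict supersolution of the linearized inequality on a suitable annular region and to touch $u$ from above at an interior point of that region. The natural choice, following the Hopf-type constructions of Bardi--Da Lio and Da Lio, is $w(x,t) = e^{-\gamma |x - y_0|^2} - e^{-\gamma \rho^2}$ (possibly with a time term), where $y_0$ is a nearby point at which $u(\cdot,t_0)$ drops strictly below $M$, and $\gamma$ is a large parameter to be chosen. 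With $p = D\phi/(-\varepsilon) = Dw$ one computes $D^2 w \sim \gamma(I - 2\gamma (x-y_0)\otimes (x-y_0))$ up to lower order, so that the Hessian contribution is exactly of the form $I - \gamma\, p \otimes p$ appearing in $(N)$ and $(S)$ after rescaling, and $|p|$ is comparable to $R$ on the relevant annulus. The key new ingredient compared to the purely local case is the nonlocal term: I would estimate $\mathcal{I}[x,t,\phi]$ from above. On the small-jump part $\mathcal{I}^1_\delta$, a Taylor expansion gives a contribution controlled by $\int_{\mathcal C_{\eta,\gamma}(p)} |p\cdot z|^2 \mu_x(dz)$ plus harmless terms (this is where the cone $\mathcal C_{\eta,\gamma}(p)$ enters — it isolates the directions where $\phi$ is genuinely concave at scale $1/\gamma$), while the large-jump part $\mathcal{I}^2_\delta$ and the far-away part are bounded using $(M)$ by the constant $\tilde C_\mu$, since $u \le M = \phi$ at the contact point pushes the increments $u(x+z)-u(x)$ in the favourable direction. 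Altogether the nonlocal term is bounded above by $\tilde C_\mu - c\gamma \int_{\mathcal C_{\eta,\gamma}(p)}|p\cdot z|^2\mu_x(dz)$ for an appropriate $c > 0$.

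Then I would feed this into the viscosity subsolution inequality: since $u - \phi$ attains a maximum at the contact point, using ellipticity $(E)$ to replace the true nonlocal argument by the upper bound just derived,
\[
\phi_t + F\bigl(x,t,\phi,D\phi,D^2\phi,\ \tilde C_\mu - c\gamma\textstyle\int_{\mathcal C_{\eta,\gamma}(p)}|p\cdot z|^2\mu_x(dz)\bigr) \le 0 .
\]
Now I apply the scaling assumption $(S)$ to pull the factor $\varepsilon$ out of $F$ — this reduces matters to the inequality with $\varepsilon = 1$ — and then invoke the nondegeneracy assumption $(N)$: as $\gamma \to +\infty$ the left-hand side of the $F$-term tends to $+\infty$ uniformly in the ball, while $\phi_t$ stays bounded (bounded for the time-independent barrier, or controlled by choosing its coefficient small), contradicting $\le 0$ for $\gamma$ large enough. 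This contradiction shows no such interior contact point with $u(\cdot,t_0) < M$ nearby can occur, hence $\Gamma$ is open, completing the proof. The main obstacle, and the step requiring the most care, is the construction of the barrier $w$ and the bookkeeping of the contact point: one must arrange that $u - \phi$ is maximized at an \emph{interior} point of the annulus and at time exactly $t_0$ (or control what happens for $t$ slightly less than $t_0$), that $|p|$ lands in the window $[R/2, R]$ demanded by $(N)$ and $(S)$, and that the small parameter $\varepsilon$ is chosen after $\gamma$ so that the barrier genuinely lies above $u$ on the parabolic boundary of the region — the nonlocal estimate itself is then a fairly mechanical Taylor-expansion-plus-$(M)$ argument, with $\mathcal C_{\eta,\gamma}(p)$ and assumption $(M^c)$ ensuring the gain term $c\gamma\int_{\mathcal C_{\eta,\gamma}}|p\cdot z|^2\mu_x(dz)$ is of size $\gamma^{\beta-1}\to\infty$ and therefore not swamped by the bounded remainder.
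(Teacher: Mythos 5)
Your proposal follows essentially the same route as the paper's proof: a Hopf-type exponential barrier $e^{-\gamma(|x-\bar x|^2+\lambda|t-t_0|^2)}$ constructed on a ball (resp.\ space-time ellipsoid) inside $\{u<M\}$ that touches $\{u=M\}$ at a boundary point, a three-way decomposition of the nonlocal operator isolating the cone $\mathcal{C}_{\eta,\gamma}(p)$ to extract the negative gain $-c\gamma\int_{\mathcal{C}_{\eta,\gamma}(p)}|p\cdot z|^2\mu_x(dz)$, and then $(S)$ to pull out the factor $\varepsilon$ followed by $(N)$ to blow up $F$ and contradict the viscosity inequality at the touching point; your ``open and closed iso-level set'' framing is a cosmetic repackaging of the paper's direct contradiction and buys nothing new, since the openness step is precisely the barrier argument. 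One small slip worth flagging: $(M^c)$ is \emph{not} a hypothesis of this theorem and plays no role in its proof — it is introduced later only as a sufficient condition on the measures under which specific nonlinearities satisfy $(N)$; the proof runs on $(N)$ alone, and the nonlocal estimate (Proposition \ref{prop::NL_est}) uses only $(M)$.
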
 \smallskip

\begin{proof}
We proceed as for locally uniformly parabolic equations and argue by contradiction. 
\smallskip

1. Suppose there exists a point $P_1=(x_1, t_0)$ such that $u(P_1)<u(P_0)$. 
The solution $u$ being upper semi-continuous, by classical arguments we can construct for fixed $t_0$ a ball $B(\bar x,R)$ where
$$u(x,t_0)< M=\max_{\R^N}(u(\cdot,t_0)), \forall x\in B(\bar x,R).$$
In addition there exists $x^*\in\partial B(\bar x,R)$ such that $u(x^*,t_0)=M$.
Translating if necessary the center $\bar x$ in the direction $x^*-\bar x$, we can choose $R<R_0$, with $R_0$ given by condition $(N)$.
 
Moreover we can extend the ball to an ellipsoid 
$$\mathcal{E}_R(\bar x, t_0):= \{(x,t);\space\ |x-\bar x|^2 + \lambda |t-t_0|^2< R^2\}$$
with $\lambda$ large enough the function $u$ satisfies
$$
u(x,t)< M, \hbox{ for } (x,t)\in\overline { \mathcal E_R(\bar x, t_0)} \hbox{ s.t. } |x-\bar x| \leq R/2.
$$ 
Remark that  $\ (x^*,t_0)\in\partial\mathcal E_R(\bar x, t_0)$ with $u(x^*,t_0)=M.$
\smallskip

\begin{figure}
\centering
\includegraphics[width=0.8\linewidth]{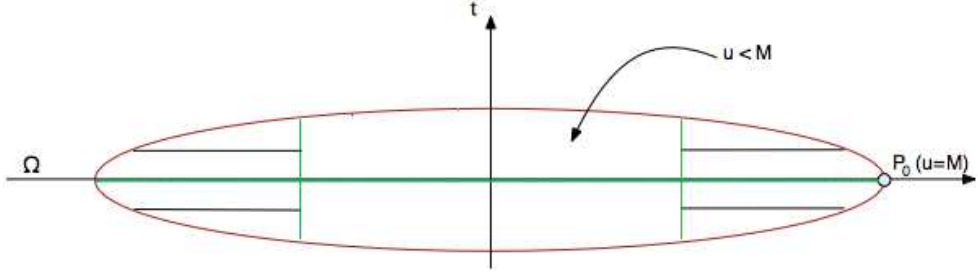}
\caption{\small Construction of the ellipsoid $\mathcal E_R(\bar x, t_0):= \{(x,t);\space\ |x-\bar x|^2 + \lambda |t-t_0|^2< R^2\}$ and of the corresponding auxiliary function $v$ such that inside the dashed area, $v$ is a strict supersolution of the integro-differential equation.}
\end{figure}

2. Introduce the auxiliary function
$$v(x,t)= e^{-\gamma R^2} - e^{-\gamma (|x-\bar x|^2 + \lambda |t-t_0|^2)}$$
where $\gamma>0$ is a large positive constant, yet to be determined.
Note that $ v=0 \hbox{ on }\partial\mathcal E_R(\bar x, t_0) $ and $-1<v<0$, in $\mathcal E_R(\bar x, t_0).$
Denote $d(x,t)=|x-\bar x|^2 + \lambda |t-t_0|^2.$ Direct computations give
\begin{eqnarray*}
       v_t(x,t) & = & 2\gamma e^{-\gamma d(x,t)} \lambda (t-t_0)\\
       Dv(x,t ) & = & 2\gamma e^{-\gamma d(x,t)} (x-\bar x)\\
       D^2v(x,t)& = & 2\gamma e^{-\gamma d(x,t)}(I-2\gamma(x-\bar x)\otimes(x-\bar x)).
\end{eqnarray*}
In upcoming Proposition \ref{prop::NL_est} we show there exist two positive constants $c = c(\eta, R)$ and $\gamma_0>0$ such that for $\gamma\geq\gamma_0$, the following estimate of the nonlocal term holds
$$
\mathcal{I}[x,t,v]\leq 2\gamma e^{-\gamma d(x,t)}\big\{\tilde C_\mu - 
	     c\gamma\int_{\mathcal C_{\eta,\gamma}(x-\bar x)}\big|(x-\bar x) \cdot z\big|^2 \mu_x (dz) \big\}
$$
in the subdomain
$
\mathcal{D}_R(\bar x, t_0):=\{ (x,t)\in\mathcal E_R(\bar x, t_0); |x-\bar x| > R/2\}.
$
\smallskip

3. From the nondegeneracy condition $(N)$ and scaling assumption $(S)$ we get that $v$ is a strict supersolution at points $(x,t)$ in $\mathcal D_R(\bar x, t_0)$. Indeed, for $\gamma$ large enough
$$
F\big(x,t,x-\bar x,I-2\gamma(x-\bar x)\otimes(x-\bar x),
\tilde C_\mu 
	-c\gamma\int_{\mathcal C_{\eta,\gamma}(x-\bar x)}\big|(x-\bar x) \cdot z\big|^2 \mu_x(dz)\big)\big)> 0
$$
On the other hand
\begin{eqnarray*}
&& v_t(x,t) +  F(x,t,Dv(x,t),D^2v(x,t),\mathcal{I}[x,t,v])  \\ 
&& \hspace{2.4cm} = 2 \gamma e^{-\gamma d(x,t)} \lambda (t-t_0)
	+ F\big(x,t,2\gamma e^{-\gamma  d(x,t)}(x-\bar x),\\
&& \hspace{3cm} ...,	2\gamma e^{-\gamma  d(x,t)}(I-2\gamma(x-\bar x)\otimes(x-\bar x)\big), \\   
&& \hspace{3cm} ..., 2\gamma e^{-\gamma  d(x,t)}
	\big\{\tilde C_\mu-c\gamma\int_{\mathcal C_{\eta,\gamma}(x-\bar x)}\big|(x-\bar x) \cdot z\big|^2 \mu_x(dz)\big\})
	\end{eqnarray*}
This further implies that
\begin{eqnarray*}
&& v_t(x,t) +  F(x,t,Dv(x,t),D^2v(x,t),\mathcal{I}[x,t,v])  \\ 
&& \hspace{2.4cm} \geq 2\gamma e^{-\gamma d(x,t)} \big( \lambda (t-t_0) + 
	F\big(x,t,x-\bar x,I-2\gamma(x-\bar x)\otimes(x-\bar x), \\
&& \hspace{3cm} ..., \tilde C_\mu 
	-c\gamma\int_{\mathcal C_{\eta,\gamma}(x-\bar x)}\big|(x-\bar x) \cdot z\big|^2 \mu_x(dz)\big)\big)> 0.
\end{eqnarray*}

Furthermore, the scaling assumption $(S)$ ensures the existence of a constant $\varepsilon_0>0$ such that for all $\varepsilon<\varepsilon_0$, $\varepsilon v$ is a strict supersolution of (\ref{ePIDEs}) in $\mathcal D_R(\bar x,t_0)$. Indeed we have
\begin{eqnarray*}
&&\varepsilon v_t(x,t)+F(x,t,\varepsilon Dv(x,t),\varepsilon D^2v(x,t),\varepsilon \mathcal{I}[x,t,v])  \\
&&   \hspace{1cm}\geq  \varepsilon\big(v_t(x,t)+ F(x,t,Dv(x,t), D^2v(x,t), \mathcal{I}[x,t,v])\big)>0.
\end{eqnarray*}

4. Remark that 
\begin{eqnarray*}
 v \geq 0 & \hbox{ in } & \mathcal E_R^c(\bar x, t_0)\\
 u < M & \hbox{ in } &\mathcal E_R(\bar x,t_0)\setminus\mathcal D_R(\bar x,t_0).
\end{eqnarray*}
Therefore, there exists some $\varepsilon_0>0$ such that for all $\varepsilon<\varepsilon_0$ outside the domain $\mathcal D_R(\bar x,t_0)$ 
$$u(x,t)\leq u(x^*,t_0)+\varepsilon v(x,t).$$
Then we claim that the inequality holds inside $\mathcal D_R(\bar x,t_0)$. Indeed, if $u\leq u(x^*,t_0)+\varepsilon v$ does not hold, then 
$\max_{\mathcal \R^N}(u-M-\varepsilon v)>0$ would be attained in $\mathcal D_R(\bar x, t_0)$ at say, $(x',t')$.
Since  $u$ is a viscosity subsolution the following would hold
$$\varepsilon v_t(x',t')+F(x',t',\varepsilon Dv(x',t'),\varepsilon D^2v(x',t'), \mathcal{I}[x',t',\varepsilon v])\leq 0$$
arriving thus to a contradiction with the fact that $M+\varepsilon v$ is a strict supersolution of (\ref{ePIDEs}). 
\smallskip 

5. The function $u(x,t)-\varepsilon v(x,t)$ has therefore a global maximum at $(x^*,t_0)$. Since $u$ is a viscosity subsolution of (\ref{ePIDEs}), we have
$$\varepsilon v_t(x^*,t_0)+F(x^*,t_0,\varepsilon Dv(x^*,t_0),\varepsilon D^2v(x^*,t_0), \mathcal{I}[x^*,t_0,\varepsilon v])\leq0.$$
As before, we arrived  at a contradiction because $\varepsilon v$ is a strict supersolution and thus the converse inequality holds at $(x^*,t_0)$. Consequently, the assumption made is false and $u$ is constant in the horizontal component of $P_0$.
\end{proof}

In the following we give the estimate for the nonlocal operator acting on the auxiliary function. We use the same notations as before.

\begin{prop}\label{prop::NL_est}
Let $R>0$, $\lambda>0, \gamma>0$ and consider the smooth function 
\begin{eqnarray*}
&& v(x,t)= e^{-\gamma R^2} - e^{-\gamma d(x,t)}\\
&& d(x,t)=|x-\bar x|^2 + \lambda |t-t_0|^2.
\end{eqnarray*}
 Then there exist two constants $c=c(\eta,R)$ and $\gamma_0>0$ such that for $\gamma\geq\gamma_0$ the nonlocal operator satisfies 
\begin{equation*}
\mathcal{I}[x,t,v]\leq 2\gamma e^{-\gamma d(x,t)}\big\{\tilde C_\mu - 
	    c\gamma\int_{\{(1-\eta)|z||x-\bar x|\leq|(x-\bar x) \cdot z|\leq 1/\gamma\}}\big|(x-\bar x) \cdot z\big|^2 \mu_x (dz) \big\}			
\end{equation*}
for all $R/2<|x-\bar x|<R$.
\end{prop}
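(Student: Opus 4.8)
The plan is to compute $\mathcal{I}[x,t,v]$ directly from the explicit formula for $v$, using the second-order Taylor-type structure of the nonlocal operator and carefully tracking the sign of the increments $v(x+z,t)-v(x,t)-Dv(x,t)\cdot z\,1_B(z)$. Writing $v(y,t) = e^{-\gamma R^2} - e^{-\gamma d(y,t)}$, note that the constant $e^{-\gamma R^2}$ drops out of all increments, so $\mathcal{I}[x,t,v] = -\mathcal{I}[x,t,w]$ where $w(y,t) = e^{-\gamma d(y,t)}$; equivalently the increments of $v$ equal $e^{-\gamma d(x,t)} - e^{-\gamma d(x+z,t)}$ plus the gradient correction. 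The first step is therefore to expand $d(x+z,t) - d(x,t) = 2(x-\bar x)\cdot z + |z|^2$ and write
$$
e^{-\gamma d(x+z,t)} = e^{-\gamma d(x,t)}\,e^{-\gamma(2(x-\bar x)\cdot z + |z|^2)},
$$
so that, after factoring out $2\gamma e^{-\gamma d(x,t)}$ (the prefactor appearing in the claimed bound, consistent with the computed $Dv$ and $D^2v$), the integrand becomes a function of $\gamma$, $z$, and $(x-\bar x)$ only. The key elementary inequality is $1 - e^{-s} \le s$ for all $s\in\R$ (and the refined $1-e^{-s}\le s - s^2/2 + \dots$ type control for $s$ small, or simply $e^{-s} \ge 1 - s + s^2/2 - \ldots$ handled via $e^{-s}\ge 1-s$ and a second-order remainder on a bounded range); applying this on the region where the argument is controlled gives the positive "drift" contribution bounded by $\tilde C_\mu$ via assumption $(M)$, and on the cone-like set $\mathcal C_{\eta,\gamma}(x-\bar x)$ one extracts the genuinely negative quadratic term $-c\gamma\int |(x-\bar x)\cdot z|^2\mu_x(dz)$.

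Concretely, I would split $\mathcal{I}[x,t,v] = \mathcal{I}^1_\delta + \mathcal{I}^2_\delta$ with $\delta = 1/\gamma$ (this is the natural scale, and it is why the upper cutoff $|p\cdot z|\le 1/\gamma$ appears in $\mathcal C_{\eta,\gamma}$). On $\{|z|>1/\gamma\}$ the increment is $e^{-\gamma d(x,t)}(1 - e^{-\gamma(2(x-\bar x)\cdot z + |z|^2)}) - 2\gamma e^{-\gamma d(x,t)}(x-\bar x)\cdot z\,1_B(z)$; using $1-e^{-s}\le s$ with $s = \gamma(2(x-\bar x)\cdot z + |z|^2)$ the linear-in-$z$ terms cancel against the gradient correction (at least on $B$), leaving a term controlled by $2\gamma e^{-\gamma d(x,t)}\gamma|z|^2$ which, integrated against $\mu_x$ on $\{1/\gamma < |z| \le 1\}$ together with the far part $\{|z|>1\}$ where one uses $\sup_x\int_{\R^N\setminus B}\mu_x(dz)<\infty$ plus boundedness of $v$, is dominated by $2\gamma e^{-\gamma d(x,t)}\tilde C_\mu$ after absorbing constants. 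The delicate part is $\mathcal{I}^1_\delta$ on $\{|z|\le 1/\gamma\}$: here $s = \gamma(2(x-\bar x)\cdot z + |z|^2)$ is bounded, so a genuine second-order expansion $1 - e^{-s} = s - \tfrac{s^2}{2} + O(s^3)$ is valid, and after cancelling $s$ against the gradient term one is left with $-\tfrac{s^2}{2} + O(s^3)$ whose dominant piece is $-2\gamma^2|(x-\bar x)\cdot z|^2$ (the cross term $(2(x-\bar x)\cdot z)^2$); restricting the integration to the sub-cone $\mathcal C_{\eta,\gamma}(x-\bar x)$ where $|(x-\bar x)\cdot z|$ is comparable to $|z||x-\bar x| \ge (R/2)|z|$ guarantees this quadratic term genuinely dominates the cubic remainder and the $|z|^2$-from-$d$ contribution, yielding the negative term $-c\gamma \cdot \gamma\int_{\mathcal C_{\eta,\gamma}(x-\bar x)}|(x-\bar x)\cdot z|^2\mu_x(dz)$ with $c = c(\eta,R)$; the positive contribution from $z\notin\mathcal C_{\eta,\gamma}(x-\bar x)$ but $|z|\le 1/\gamma$ is nonpositive (it is $s - \tfrac{s^2}{2}+\dots$ with the leading behavior still absorbed, using that on this complementary set $|(x-\bar x)\cdot z|$ is small relative to $|z||x-\bar x|$) or else harmlessly bounded by the $\tilde C_\mu$ term.

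The main obstacle is the bookkeeping in step three: ensuring that the cubic and higher remainder terms in the expansion of $1-e^{-s}$, as well as the $|z|^2$ contribution coming from the $|z|^2$ inside $d(x+z,t)-d(x,t)$, are genuinely dominated by the extracted negative quadratic term $-2\gamma^2|(x-\bar x)\cdot z|^2$ \emph{uniformly} for $R/2 < |x-\bar x| < R$ and for all large $\gamma$ — this is exactly where the cone restriction $\mathcal C_{\eta,\gamma}(p)$ and the upper cutoff $|p\cdot z|\le 1/\gamma$ are essential, since on that set $\gamma|(x-\bar x)\cdot z|\le 1$ controls the remainder and $(1-\eta)|z||x-\bar x|\le |(x-\bar x)\cdot z|$ converts $|z|^2$-quantities into $|(x-\bar x)\cdot z|^2$-quantities up to the factor $((1-\eta)R/2)^{-2}$, which is absorbed into $c(\eta,R)$. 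Once the sign and size of each piece are pinned down, collecting them gives precisely the claimed inequality, and one checks that $\gamma_0$ can be chosen (depending on $\eta, R, \lambda, \tilde C_\mu$) so that all "large $\gamma$" expansions are valid and all absorptions go through.
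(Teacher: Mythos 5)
Your overall strategy tracks the paper's: factor out $e^{-\gamma d(x,t)}$, use $d(x+z,t)-d(x,t)=2(x-\bar x)\cdot z+|z|^2$, control the positive contributions by $\tilde C_\mu$ via assumption $(M)$, and extract the negative quadratic term on the cone $\mathcal C_{\eta,\gamma}(x-\bar x)$. The paper's three-piece decomposition into $\{|z|\geq 1\}$, $B\setminus\mathcal C_{\eta,\gamma}$ and $\mathcal C_{\eta,\gamma}$ differs from your cut at $|z|=1/\gamma$, but that is a surface difference.

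The genuine gap is in the crucial step. You claim that on $\{|z|\le 1/\gamma\}$ the quantity $s=\gamma(2(x-\bar x)\cdot z+|z|^2)$ is bounded and therefore ``a genuine second-order expansion $1 - e^{-s} = s - \tfrac{s^2}{2} + O(s^3)$ is valid,'' with the quadratic then ``genuinely dominating the cubic remainder.'' But on the cone the constraint $\gamma|(x-\bar x)\cdot z|\leq 1$ only forces $|s|\leq 2+O(1/\gamma)$ --- bounded, yet not small --- and for $s$ of that size the remainder $R(s)=1-e^{-s}-s+\tfrac{s^2}{2}$ is comparable to $\tfrac{s^2}{2}$, not negligible. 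The asymptotic $O(s^3)$ notation buys you nothing quantitative on that range, and domination in the sense of the remainder being a lower-order correction is simply false there. What actually works, and is the content of Lemma~\ref{lemma:exp_ineq2} which the paper invokes, is the \emph{exact} second-order Lagrange form $1-e^{-s}-s=-\tfrac{s^2}{2}e^{-\xi}$ with $\xi$ between $0$ and $s$: for all $s\leq\delta$ one has $e^{-\xi}\geq e^{-\delta}$, hence the uniform inequality $1-e^{-s}-s\leq -\tfrac12 e^{-\delta}s^2$, i.e.\ $e^y-1-y\geq\tfrac12 e^{-\delta}y^2$ for $y\geq-\delta$ --- a convexity estimate valid on a whole half-line, not a Taylor expansion near the origin. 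The paper then takes $\delta=2+\tfrac{2}{(1-\eta)R}$ so that $\mathcal C_{\eta,\gamma}(x-\bar x)\subseteq\{s\le\delta\}$ for $\gamma\geq\gamma_0$, and reads off $c=c(\eta,R)=\tfrac12 e^{-\delta}$. You allude to this idea in passing (``a second-order remainder on a bounded range'') but then revert to the asymptotic expansion, which does not close. A secondary, easily fixable slip: $\mathcal C_{\eta,\gamma}(x-\bar x)\subseteq\{|z|\leq 2/(\gamma(1-\eta)R)\}$, and for $R<R_0$ small this radius exceeds $1/\gamma$, so your cutoff at $|z|=1/\gamma$ does not in fact contain the cone.
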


\begin{proof}
In order to estimate the nonlocal term $\mathcal{I}[x,t,v]$, we split the domain of integration into three pieces and take the integrals on each of these domains. Namely we part the unit ball into the subset 
$$\mathcal{C}_{\eta,\gamma}(x-\bar x) = \{z;(1-\eta)|z||x-\bar x|\leq|(x-\bar x) \cdot z|\leq 1/\gamma\}$$
and its complementary. Indeed $\mathcal{C}_{\eta,\gamma} (x-\bar x)$ lies inside the unit ball, as for $|x-\bar x|\geq R/2$ and for $\gamma$ large enough 
\begin{equation*}
 |z| \leq \frac{1}{\gamma (1-\eta) |x-\bar x|} \leq \frac{2}{\gamma (1-\eta) R} \leq 1.
\end{equation*} 
Thus we write the nonlocal term as the sum 
\begin{eqnarray*}
\mathcal{I}[x,t,v]=\mathcal{T}^1[x,t,v]+\mathcal{T}^2[x,t,v]+\mathcal{T}^3[x,t,v]
\end{eqnarray*}
with
\begin{eqnarray*}
&&  \mathcal{T}^1[x,t,v]=\int_{|z|\geq 1} (v(x+z,t)-v(x,t))\mu_x(dz) \\
&&  \mathcal{T}^2[x,t,v]=\int_{B\setminus\mathcal{C}_{\eta,\gamma}(x-\bar x)}(v(x+z,t)-v(x,t)-Dv(x,t)\cdot z)\mu_x(dz)\\
&&  \mathcal{T}^3[x,t,v]=\int_{\mathcal{C}_{\eta,\gamma}(x-\bar x)}(v(x+z,t)-v(x,t)-Dv(x,t)\cdot z)\mu_x(dz).
\end{eqnarray*}
In the sequel, we show that each integral term is controlled from above by an exponential term of the form $\gamma e^{-\gamma d(x,t)}$. In addition, the last integral is driven by a nonpositive quadratic nonlocal term.

\begin{lemma} \label{est_I1}
We have
\begin{eqnarray*}
&& \mathcal{T}^1[x,t,v]\leq e^{-\gamma d(x,t)}\int_{|z|\geq 1} \mu_x(dz), \forall (x,t)\in\Omega\times[0,T].
\end{eqnarray*}
\end{lemma}

\begin{proof}
The estimate is due to the uniform bound of the measures $\mu_x$ away from the origin. Namely
\begin{eqnarray}
  \nonumber \mathcal{T}^1[x,t,v] &   =   & \int_{|z|\geq 1} (-e^{-\gamma d(x+z,t)} + e^{-\gamma d(x,t)})\mu_x(dz)\\
  \nonumber  \ & \leq  & \int_{|z|\geq 1} e^{-\gamma d(x,t)}\mu_x(dz) = e^{-\gamma d(x,t)} \int_{|z|\geq 1} \mu_x(dz)\leq e^{-\gamma d(x,t)} \tilde C_\mu.
\end{eqnarray}
\end{proof} 

\begin{lemma}\label{est_I2}
We have
 \begin{eqnarray*}
 \mathcal{T}^2[x,t,v] \leq \gamma e^{-\gamma d(x,t)}\int_{B}|z|^2\mu_x(dz), \forall (x,t)\in\Omega\times[0,T].
\end{eqnarray*}
\end{lemma}

\begin{proof}
Note that $\mathcal{T}^2[x,t,v] = - \mathcal{T}^2[x,t,e^{-\gamma d}].$
From Lemma \ref{lemma:exp_ineq1} in Appendix
\begin{eqnarray*}
&& \mathcal{T}^2[x,t,e^{-\gamma d}]\geq e^{-\gamma d(x,t)} \mathcal{T}^2[x,t,-\gamma d]= -\gamma e^{-\gamma d(x,t)} \mathcal{T}^2[x,t, d].
\end{eqnarray*}
Taking into account the expression for $d(x,t)$, we get that
\begin{eqnarray*}
\mathcal{T}^2[x,t,v] & \leq & \gamma e^{-\gamma d(x,t)}\int_{B\setminus\mathcal{C}_{\eta,\gamma}(x-\bar x)} (d(x+z,t) - d(x,t)- Dd(x,t)\cdot z)\mu_x(dz)\\
          \ &   =  & \gamma e^{-\gamma d(x,t)}\int_{B\setminus\mathcal{C}_{\eta,\gamma}(x-\bar x)}|z|^2\mu_x(dz)\\
&\leq &	      \gamma e^{-\gamma d(x,t)}\int_{B}|z|^2\mu_x(dz)\leq\gamma e^{-\gamma d(x,t)} \tilde C_\mu.
\end{eqnarray*}
\end{proof} 

\begin{lemma}\label{est_I3}
There exist two positive constants $c = c(\eta,R)$ and $\gamma_0>0$ such that for $\gamma\geq\gamma_0$
\begin{eqnarray}
\nonumber \mathcal{T}^3[x,t,v]\leq e^{-\gamma d(x,t)}\big(\gamma\int_{B}|z|^2\mu_x(dz) 
- 2 c\gamma^2\int_{\mathcal{C}_{\eta,\gamma}(x-\bar x)}\big|(x-\bar x) \cdot z\big|^2 \mu_x (dz) \big).
\end{eqnarray} 
for all $(x,t)\in\mathcal{D}_R.$
\end{lemma}

\begin{proof}
Rewrite equivalently the integral as
$$
\mathcal{T}^3[x,t,v]=\mathcal{T}^3[x,t,v-e^{-\gamma R^2}] = - \mathcal{T}^3[x,t,e^{-\gamma d}].
$$
We apply then Lemma \ref{lemma:exp_ineq2} in Appendix to the function $e^{-\gamma d}$ and get that for all $\delta>0$ there exists $c=c(\eta,R) >0$ such that  
\begin{eqnarray*}
 \mathcal{T}^3[x,t,e^{-\gamma d}] & \geq & e^{-\gamma d(x,t)}\big(\mathcal{T}^3[x,t,-\gamma d]+
2c \gamma^2\int_{\mathcal{C}_{\eta,\gamma}(x-\bar x)}\big(d(x+z,t)-d(x,t)\big)^2\mu_x (dz)\big)\\
        \ &    = & -\gamma e^{-\gamma d(x,t)}\big(\mathcal{T}^3[x,t,d]-
2c \gamma\int_{\mathcal{C}_{\eta,\gamma}(x-\bar x)}\big(d(x+z,t)-d(x,t)\big)^2\mu_x (dz)\big).
\end{eqnarray*} 
Remark that $\mathcal C_{\eta,\gamma}(x-\bar x) \subseteq D_\delta$ for $\delta = 2 + \frac{2}{(1-\eta)R}$, with
$$\mathcal D_\delta = \{z;\space\ \gamma \big(d(x+z,t)-d(x,t)\big)\leq \delta\}=\{z;\space\ \gamma (2(x-\bar x) \cdot z + |z|^2)\leq \delta\}.$$ 
We have thus
\begin{eqnarray*}
 \mathcal{T}^3[x,t,v]  & \leq & \gamma e^{-\gamma d(x,t)}\big(\mathcal{T}^3[x,t,d]- 2c \gamma\int_{\mathcal{C}_{\eta,\gamma}(x-\bar x)}\big(d(x+z,t)-d(x,t)\big)^2\mu_x (dz)\big).
\end{eqnarray*} 
Taking into account the expression of $d(x,t)$, direct computations give
\begin{eqnarray*}
\mathcal{T}^3[x,t,d]  & = & \int_{\mathcal{C}_{\eta,\gamma}(x-\bar x)} \big(d(x+z,t) - d(x,t)- Dd(x,t)\cdot z\big)\mu_x(dz) \\
		      & = & \int_{\mathcal{C}_{\eta,\gamma}(x-\bar x)}|z|^2\mu_x(dz)\leq \int_{B}|z|^2\mu_x(dz),
\end{eqnarray*} 
while the quadratic term is bounded from below by
\begin{eqnarray*}
\int_{\mathcal{C}_{\eta,\gamma}(x-\bar x)}\big(d(x+z,t)-d(x,t)\big)^2\mu_x (dz)
&  =  & \int_{\mathcal{C}_{\eta,\gamma}(x-\bar x)}\big|2(x-\bar x) \cdot z + |z|^2\big|^2\mu_x (dz)\\
&\geq & \int_{\mathcal{C}_{\eta,\gamma}(x-\bar x)}\big|(x-\bar x) \cdot z\big|^2 \mu_x (dz).
\end{eqnarray*}
Indeed, recall that $|x-\bar x|\geq R/2$ and see that for all $z\in\mathcal{C}_{\eta,\gamma}(x-\bar x)$ 
\begin{eqnarray*}
(1-\eta)|x-\bar x||z|\leq 1/\gamma                    & \Rightarrow & |z| \leq \frac{2}{ \gamma R(1-\eta) }\\
(1-\eta)|x-\bar x||z|\leq |(x-\bar x) \cdot z| & \Rightarrow & |z| \leq \frac{2|(x-\bar x) \cdot z|}{ R(1-\eta) } 
\end{eqnarray*}
Then for $\gamma_0= 4/ R^2(1-\eta)^2$ and $\gamma\geq\gamma_0$ we have the estimate
\begin{eqnarray*}
\big|2(x-\bar x) \cdot z + |z|^2\big| & \geq & 2|(x-\bar x) \cdot z|-|z|^2 
       \geq 2|(x-\bar x) \cdot z| -\frac{4|(x-\bar x) \cdot z|}{\gamma R^2 (1-\eta)^2}\\
\ & = & |(x-\bar x) \cdot z| \big(2-\frac{4}{\gamma R^2 (1-\eta)^2}\big) \geq |(x-\bar x) \cdot z|.
\end{eqnarray*}
Therefore, we obtain the upper bound for the integral term
$$
 \mathcal{T}^3[x,t,v] \leq \gamma e^{-\gamma d(x,t)}\big(\int_{\mathcal{C}_{\eta,\gamma}(x-\bar x)}|z|^2\mu_x(dz) 
- 2c\gamma\int_{\mathcal{C}_{\eta,\gamma}(x-\bar x)}\big|(x-\bar x) \cdot z\big|^2 \mu_x (dz) \big).
$$
\end{proof} 
From the three lemmas estimating the integral terms we deduce that
\begin{eqnarray*}
\mathcal{I}[x,t,v] & \leq & e^{-\gamma d(x,t)}  \Big\{ \int_{|z|\geq 1} \mu_x(dz) + 2\gamma \int_{B} |z|^2\mu_x(dz)
			  - 2c\gamma^2\int_{\mathcal{C}_{\eta,\gamma}(x-\bar x)}\big|(x-\bar x) \cdot z\big|^2 \mu_x (dz)\Big\}\\
		   & \leq & 2\gamma e^{-\gamma d(x,t)} 
			     \big\{ \tilde C_\mu - c\gamma\int_{\mathcal{C}_{\eta,\gamma}(x-\bar x)}\big|(x-\bar x) \cdot z\big|^2 \mu_x (dz)\big\}.
\end{eqnarray*}
\end{proof}
 
\medskip\subsection[Local Vertical Propagation]
{Local Vertical Propagation of Maxima}

We show that if $u\in USC(\R^N\times[0,T])$ is a viscosity subsolution of (\ref{ePIDEs}) which attains a maximum at $P_0=(x_0,t_0)\in Q_T$, then the maximum propagates locally in rectangles, say,
$$\mathcal R (x_0,t_0)= \{(x,t)| |x^i-x^i_0|\leq a^i, t_0-a_0\leq t\leq t_0\}$$
where we have denoted $x = (x^1, x^2, ..., x^N)$.
Denote by $\mathcal R_0 (x_0,t_0)$ the rectangle $\mathcal R (x_0,t_0)$ less the top face $\{t=t_0\}$. 
\smallskip

Local vertical propagation of maxima occurs under softer assumptions on the nondegeneracy and scaling conditions. More precisely, we suppose the following holds: 
\begin{itemize}
\item [($N'$)]  For any $(x_0,t_0)\in Q_T$ there exists $\lambda> 0$  such that
 		$$\lambda+F(x_0,t_0,0,I, \tilde C_\mu)>0$$
 		where $\tilde C_\mu$ is given by assumption $(M)$.
 		
\item [($S'$)] There exist two constants $r_0>0$, $\varepsilon_0>0$ such that for all $\varepsilon<\varepsilon_0$ and $0<r<r_0$ the following condition holds for all 
		$(x,t)\in B((x_0,t_0),r)$, $|p|\leq r$, $l \leq \tilde C_\mu$
		$$F(x,t,\varepsilon p,\varepsilon I, \varepsilon l)\geq	\varepsilon F(x,t, p,I,l).$$ 
\end{itemize} 

\begin{theorem}
Let $u\in USC(\R^N\times[0,T])$ be a viscosity subsolution of (\ref{ePIDEs}) that attains a maximum at $P_0=(x_0,t_0)\in Q_T$. If $F$ satisfies $(E), (N')$ and $(S')$ then for any rectangle $\mathcal R (x_0,t_0)$, $\mathcal R_0 (x_0,t_0)$ contains a point $P\not=P_0$ such that $u(P)=u(P_0)$.
\end{theorem}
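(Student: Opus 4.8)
Proof plan.

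The plan is to follow the scheme of Theorem~\ref{th::horizprop}: argue by contradiction, construct a smooth barrier that is a strict supersolution on a thin region hanging below $P_0$, and contradict the viscosity subsolution inequality of $u$. Suppose the conclusion fails for some rectangle $\mathcal R(x_0,t_0)$; since $P_0\notin\mathcal R_0(x_0,t_0)$ this means $u<M$ on $\mathcal R_0(x_0,t_0)$, where $M:=u(P_0)$, and we aim for a contradiction. Fix $\lambda>0$ as in $(N')$ and set $\theta_0:=\lambda+F(x_0,t_0,0,I,\tilde C_\mu)>0$; by continuity of $F$ and its monotonicity in the last variable, $\lambda+F(x,t,p,I,l)\ge\theta_0/2$ whenever $(x,t)$ is close to $(x_0,t_0)$, $|p|$ is small and $l\le\tilde C_\mu$.

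Next I would build the barrier. Choose $r>0$ small and $\mu>0$ large, put $\bar t:=t_0-r/\sqrt\mu$ and $d(x,t):=|x-x_0|^2+\mu(t-\bar t)^2$, so that the ellipsoid $\mathcal E:=\{d<r^2\}$ satisfies $\overline{\mathcal E}\subset\mathcal R(x_0,t_0)$, $\overline{\mathcal E}\subset B\big((x_0,t_0),\rho_0\big)$ with $\rho_0$ from $(S')$, and $r\sqrt\mu>\lambda+1$; note $\overline{\mathcal E}\cap\{t=t_0\}=\{P_0\}$, hence $\overline{\mathcal E}\setminus\{P_0\}\subset\mathcal R_0(x_0,t_0)$. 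For $\gamma\ge1$ large set
$$v(x,t)=e^{-\gamma r^2}-e^{-\gamma d(x,t)},$$
so $v=0$ on $\partial\mathcal E$ and at $P_0$, $-1<v<0$ in $\mathcal E$, $v\ge0$ outside $\mathcal E$, while $Dv(P_0)=0$, $D^2v(P_0)=2\gamma e^{-\gamma r^2}I$ and $v_t(P_0)=2r\sqrt\mu\,\gamma e^{-\gamma r^2}$. Finally let $t_1:=\bar t+(\lambda+1)/\mu\in(\bar t,t_0)$ and $\mathcal D:=\mathcal E\cap\{t>t_1\}$; this lens-shaped set satisfies $P_0\in\partial\mathcal D$.

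The core step is to show that, for $\varepsilon$ small, $\varepsilon v$ is a strict supersolution of (\ref{ePIDEs}) on $\overline{\mathcal D}$. Since the time part of $d$ does not affect $\mathcal I$, the computations of Lemmas~\ref{est_I1}--\ref{est_I2} and Proposition~\ref{prop::NL_est} apply verbatim and give $\mathcal I[x,t,v]\le2\gamma e^{-\gamma d(x,t)}\tilde C_\mu$. Using $D^2v=2\gamma e^{-\gamma d}(I-2\gamma(x-x_0)\otimes(x-x_0))\le2\gamma e^{-\gamma d}I$, then degenerate ellipticity $(E)$, then the scaling property $(S')$ with $\varepsilon'=2\varepsilon\gamma e^{-\gamma d}$, $p=x-x_0$ and $l=\tilde C_\mu$, then monotonicity in $l$ and the continuity bound derived from $(N')$, one obtains on $\overline{\mathcal D}$
$$\varepsilon v_t+F\big(x,t,\varepsilon Dv,\varepsilon D^2v,\mathcal I[x,t,\varepsilon v]\big)\ge2\varepsilon\gamma e^{-\gamma d}\big(\mu(t-\bar t)-\lambda+\theta_0/2\big)>0,$$
since $t\ge t_1$ forces $\mu(t-\bar t)\ge\lambda+1$; at $P_0$ one uses the exact identity $D^2v(P_0)=2\gamma e^{-\gamma r^2}I$ and $r\sqrt\mu>\lambda+1$. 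After this, the comparison argument is parallel to Theorem~\ref{th::horizprop}. Outside $\mathcal D$ one has $u\le M+\varepsilon v$: on $\{d\ge r^2\}$ because $v\ge0$ and $u\le M$; on the compact subset $\{d\le r^2,\ t\le t_1\}$ of $\mathcal R_0(x_0,t_0)$ because there $u$ is usc and $<M$, so $u\le M-\delta$ for some $\delta>0$ and hence $u\le M-\delta\le M+\varepsilon v$ once $\varepsilon<\delta$ (as $\|v\|_\infty<1$). If $u\le M+\varepsilon v$ failed somewhere in $\mathcal D$, then $u-M-\varepsilon v$ would attain a positive maximum over $\overline{\mathcal D}$ at an interior point — it is $\le0$ on $\partial\mathcal D$ by the above and since $v=0$ on the curved boundary and at $P_0$ — hence a global maximum of $u-(M+\varepsilon v)$; the subsolution inequality there (evaluating the nonlocal term on the test function, as one may at a global maximum, and using monotonicity of $F$ in $l$) would contradict strict supersolutionarity. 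Therefore $u\le M+\varepsilon v$ everywhere, and since $v(P_0)=0=u(P_0)-M$ the function $u-(M+\varepsilon v)$ attains its global maximum at $P_0\in\partial\mathcal D$; the subsolution inequality at $P_0$ then contradicts the strict supersolution property, proving the theorem (and $P\ne P_0$ since $P_0\notin\mathcal R_0(x_0,t_0)$).

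I expect the construction and verification of the barrier to be the main obstacle: one must get the geometry right — a thin ellipsoid hanging below $P_0$, centered spatially at $x_0$ so that $Dv(P_0)=0$ and $D^2v(P_0)$ is a positive multiple of $I$, truncated to the lens $\{t>t_1\}$ where the time-derivative term dominates — and choose $r,\mu,\gamma,\varepsilon$ so that $\overline{\mathcal E}$ fits both inside the given rectangle and inside the neighbourhood where $(S')$ and the consequence of $(N')$ are available, so that $r\sqrt\mu$ is large, and so that $\mathcal I[\cdot,\cdot,v]$ stays controlled by $\tilde C_\mu$; one must also run the chain ``degenerate ellipticity $\to(S')\to$ monotonicity in $l\to$ continuity/$(N')$'' carefully. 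The remaining comparison steps and the two applications of the subsolution inequality are routine, mirroring the proof of Theorem~\ref{th::horizprop}.
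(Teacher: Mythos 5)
Your proof is correct, but it follows a genuinely different route from the paper's. The paper's barrier is the much simpler function $v(x,t)=1-e^{-h(x,t)}$ with $h(x,t)=\tfrac12|x-x_0|^2+\lambda(t-t_0)$, \emph{linear} in time, designed so that at $P_0$ one gets exactly $v_t=\lambda$, $Dv=0$, $D^2v=I$, $\mathcal I[P_0,v]\le\tilde C_\mu$, so that the strict supersolution property at $P_0$ is literally the inequality in $(N')$; continuity of $F$ then extends it to a ball $B(P_0,r)$ and $(S')$ scales it down, after which the comparison is run on $S=B(P_0,r)\cap\{v<0\}$. You instead carry over the Hopf ellipsoid construction from Theorem~\ref{th::horizprop}: the barrier $v=e^{-\gamma r^2}-e^{-\gamma d}$ with $d$ \emph{quadratic} in time, an ellipsoid $\mathcal E$ hanging below $P_0$ so that $\overline{\mathcal E}\cap\{t=t_0\}=\{P_0\}$ with $Dv(P_0)=0$ and $D^2v(P_0)$ a positive multiple of $I$, truncated to the lens $\mathcal D=\mathcal E\cap\{t>t_1\}$ where the time derivative $\mu(t-\bar t)$ dominates the constant $-\lambda$ coming out of $(N')$. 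This is heavier machinery than the paper needs here (the parameters $\gamma,\mu,r,t_1$ and the lens geometry all have to be tuned), but it makes the treatment uniform with the horizontal case and it has a real payoff: your $v$ satisfies $v\ge0$ on the whole complement of the compact set $\overline{\mathcal E}$, so the inequality $u\le M+\varepsilon v$ genuinely holds globally and $P_0$ is a global max of $u-\varepsilon v$ without any further modification of the test function. By contrast, the paper's $\{v<0\}=\{h<0\}$ is an unbounded paraboloidal region and the step from ``$u\le M+\varepsilon v$ on $S$'' to ``$(x_0,t_0)$ is a \emph{global} max of $u-\varepsilon v$'' is glossed over (it requires modifying $v$ outside $B(P_0,r)$, legitimate but unstated). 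You are also more careful than the paper with the application of $(S')$: since $(S')$ is stated only with the Hessian argument $I$, your chain ``$D^2v\le 2\gamma e^{-\gamma d}I$, then degenerate ellipticity, then $(S')$ with $\varepsilon'=2\varepsilon\gamma e^{-\gamma d}$, then monotonicity in $l$, then continuity'' is the correct way to invoke it, whereas the paper writes the scaling inequality directly with $D^2v\ne I$ in the argument. Both arguments are sound; yours is longer but tighter, the paper's is shorter and more directly reflects why $(N')$ is phrased the way it is.
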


\begin{proof}
Similarly to the horizontal propagation of maxima, we argue by contradiction.

1. Suppose there exists a rectangle $\mathcal R(x_0,t_0)$  on which 
$u(x,t)<M=u(x_0,t_0)$, with $\mathcal R_0 (x_0,t_0)\subseteq \Omega\times[0,t_0)$. 
Denote $h(x,t)=\frac12|x-x_0|^2+\lambda(t-t_0)$ with $\lambda>0$ a constant yet to be determined.
Consider the auxiliary function
$$v(x,t)=1-e^{-h(x,t)}.$$
Direct calculations give
$$
\begin{array}{l}
  v_t(x,t) = \lambda e^{- h(x,t)}\\ 
  Dv(x,t)  = e^{- h(x,t)} (x-x_0)\\
  D^2v(x,t)= e^{- h(x,t)}(I-(x-x_0)\otimes(x-x_0)),
\end{array}
$$
Note that 
\begin{eqnarray*}
  v(x_0,t_0)=0 & v_t(x_0,t_0)=\lambda\\
  Dv(x_0,t_0)=0 & D^2v(x_0,t_0)= I.
\end{eqnarray*}
The nonlocal term is written as the sum of two integral operators:
$$\mathcal{I}[x,t,v]=\mathcal{T}^1[x,t,v]+\mathcal{T}^2[x,t,v],$$
where 
\begin{eqnarray*}
 \mathcal{T}^1[x,t,v] & = &\int_{|z|\geq 1} (v(x+z,t)-v(x,t))\mu_x(dz) \\
 \mathcal{T}^2[x,t,v] & = &\int_{B}(v(x+z,t)-v(x,t)-Dv(x,t)\cdot z)\mu_x(dz). 
\end{eqnarray*}
Similarly to Lemma \ref{est_I1} we obtain the estimate:

\begin{lemma} We have
\begin{eqnarray}\nonumber
 \mathcal{T}^1[x,t,v]\leq e^{-h(x,t)}  \int_{|z|\geq 1}\mu_x(dz), \forall (x,t)\in\Omega\times[0,T].
\end{eqnarray}
\end{lemma}

On the other hand, the estimate obtained for the second integral term is softer than the estimate obtained in the case of the horizontal propagation of maxima. 

\begin{lemma}  We have
\begin{eqnarray}\nonumber
 \mathcal{T}^2[x,t,v]\leq e^{-h(x,t)} \int_{B}|z|^2\mu_x(dz), \forall (x,t)\in\Omega\times[0,T].
\end{eqnarray}
\end{lemma}

\begin{proof}
1. From Lemma \ref{lemma:exp_ineq1} we have
$$
\mathcal{T}^2[x,t,v]= -\mathcal{T}^2[x,t,e^{-h}]\leq e^{-h(x,t)}\mathcal{T}^2[x,t,h].
$$
We then use a second-order Taylor expansion for $h$ and get
\begin{eqnarray}
\nonumber \mathcal{T}^2[x,t,h] & = & \frac{1}{2}\int_{B} \sup_{\theta\in(-1,1)}\big(D^2h(x+\theta z, t)z\cdot z\big)\mu_x(dz) \\
\nonumber          \ & = & \frac{1}{2}\int_{B}|z|^2\mu_x(dz)\leq\frac{1}{2}\int_{B}|z|^2\mu_x(dz),
\end{eqnarray}
from where the conclusion.
\end{proof} 
We now go back to the proof of the theorem and see that 
$$
\mathcal{I}[x,t,v]\leq e^{-h(x,t)}\tilde C_\mu.
$$
In particular $\mathcal{I}[x_0,t_0,v]\leq \tilde C_\mu.$   
\smallskip

2. From the nondegeneracy assumption $(N')$ we have that there exists $\lambda>0$ such that
\begin{eqnarray*}
&&  
v_t(x_0,t_0)+F(x_0,t_0,Dv(x_0,t_0),D^2v(x_0,t_0),\mathcal{I}[x_0,t_0,v]) \\
&&  
\hspace{2cm}\geq v_t(x_0,t_0)+F(x_0,t_0,Dv(x_0,t_0),D^2v(x_0,t_0),\tilde C_\mu) \\
&&  
\hspace{2cm} = \lambda+F(x_0,t_0, 0, I, \tilde C_\mu ) > 0.
\end{eqnarray*}
Hence $v$ is a strict supersolution of (\ref{ePIDEs}) at $(x_0,t_0)$. By the continuity of $F$, there exists $r<r_0$ such that $\forall (x,t)\in B((x_0,t_0),r)\subseteq  Q_T$
$$v_t(x,t)+F(x,t,Dv(x,t),D^2v(x,t),\mathcal{I}[x,t,v])\geq C>0.$$
Consider then the set 
$$ S = B((x_0,t_0),r)\cap \{(x,t)| v(x,t)<0\}.$$
By $(S')$ there exists $\varepsilon_0>0$ such that $\forall \varepsilon<\varepsilon_0$, $\varepsilon v$ is a strict supersolution of (\ref{ePIDEs}) in $S$. Indeed
\begin{eqnarray*}
 &&  
\varepsilon v_t(x,t)+F(x,t,\varepsilon Dv(x,t),\varepsilon D^2v(x,t),\varepsilon \mathcal{I}[x,t,v]) \geq \\
 &&  
\hspace{0.4cm}\varepsilon \big( v_t(x,t)+F(x,t,Dv(x,t), D^2v(x,t), \mathcal{I}[x,t,v])\big ) >0.
\end{eqnarray*}

3. Let $\varepsilon_0$ be sufficiently small such that
$$u(x,t)-u(x_0,t_0)\leq\varepsilon v(x,t), \ \forall (x,t)\in\partial S.$$
Then, arguing as in the case of horizontal propagation of maxima we get
$$u(x,t)-u(x_0,t_0)\leq\varepsilon v(x,t), \ \forall (x,t)\in S.$$
Thus $(x_0,t_0)$ is a maximum of $u-\varepsilon v$ with $Dv(x_0,t_0)=\lambda>0$. Since $u$ is a subsolution, we have
$$ \varepsilon v_t(x_0,t_0)+F(x_0,t_0,\varepsilon v(x_0,t_0),\varepsilon Dv(x_0,t_0),\varepsilon D^2v(x_0,t_0),\mathcal{I}[x_0,t_0,\varepsilon v])\leq 0.$$
We arrived at a contradiction with the fact that $\varepsilon v$ is a strict supersolution. Thus, the supposition is false and the rectangle contains a point $P\neq P_0$ such that $u(P)=u(P_0)$.

\end{proof}

\begin{example}\label{ex_dislocation}
Non-local first order Hamilton Jacobi equations describing the dislocation dynamics
\begin{equation*}
 u_t = (c(x) + M[u]) |Du|
\end{equation*}
where $M$ is a zero order nonlocal operator defined by
$$
M[u](x,t) = \int_{\R^N} \big(u(x+z,t) - u(x,t))\mu(dz) 
$$
with $$\mu(dz)= g(\frac{z}{|z|})\frac{dz}{|z|^{N+1}}$$ have vertical propagation of maxima. 
\smallskip

{\rm Indeed, they do not satisfy any of the sets of assumptions required by Theorems \ref{th::trans_supp} and \ref{th::horizprop}.  Particularly nondegeneracy condition $(N)$
$$
-\big(c(x) + \tilde C_\mu\big) |p| > 0
$$  fails for example if $c(x)\geq 0$, and holds whenever $c(x)<-\tilde C_\mu$. Hence, one cannot conclude on horizontal propagation of maxima. 

On the other hand we have local vertical propagation of maxima, since $(N')$ is immediate and $(S')$ is satisfied by $\tilde F = -c(x)|p|$, the linear approximation of the nonlinearity
$$
-\big(c(x) + \varepsilon l) |\varepsilon p| = - \varepsilon c(x) |p| + o(\varepsilon^2).
$$}
\end{example} 

\subsection{Strong Maximum Principle}

When both horizontal and local vertical propagation of maxima occur for a viscosity subsolution of (\ref{ePIDEs}) which attains a global maximum at an interior point,  the function is constant in any rectangle contained in the domain $\overline\Omega\times[0,t_0]$ passing through the maximum point.

\begin{prop}
Let $u\in USC(\R^N\times [0,T])$ be a viscosity subsolution of (\ref{ePIDEs}) in $\Omega\times (0,T)$ that attains a global maximum at $(x_0,t_0)\in Q_T$. Assume the family of measures $\{\mu_x\}_{x\in\Omega}$ satisfies assumption $(M)$ and assume $\Omega\subset\overline{\bigcup_{n\geq 0}A_n}$, with $\{A_n\}_n$ given by (\ref{trans_supp}).
If $F$ satisfies $(E')$, $(S')$ and $(N')$ then u is constant in any rectangle $\mathcal R(x_0,t_0)\subseteq\overline\Omega\times[0,t_0]$.
\end{prop}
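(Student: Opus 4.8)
The plan is to combine the two propagation results already established, Theorem~\ref{th::trans_supp} (horizontal propagation by translations of measure supports) and the preceding theorem on local vertical propagation, and to glue them together by an iteration that alternately moves down in time and spreads out in space. First I would fix a rectangle $\mathcal R(x_0,t_0)\subseteq\overline\Omega\times[0,t_0]$ and show that $u\equiv M:=u(x_0,t_0)$ on it. The hypothesis $\Omega\subset\overline{\bigcup_{n\geq 0}A_n}$ together with assumption $(E')$ and Theorem~\ref{th::trans_supp} (and Remark~\ref{rk::Omega}) gives immediately that $u(\cdot,t_0)\equiv M$ on all of $\overline\Omega$, and in particular on the top face $\{t=t_0\}$ of $\mathcal R(x_0,t_0)$. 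This handles the horizontal direction at the initial time slice.

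The core of the argument is then the vertical descent. Using the vertical propagation theorem with assumptions $(E)$, $(N')$, $(S')$: since $u$ attains its maximum at $(x_0,t_0)$, for every rectangle around $(x_0,t_0)$ the set $\mathcal R_0(x_0,t_0)$ contains a maximum point $P\neq P_0$. I would apply this not just at $(x_0,t_0)$ but at \emph{every} point $(y,t_0)$ with $y\in\Omega$ — each such point is also a global maximum of $u$ by the first step — to produce, at each earlier time, maximum points; then iterate. Concretely, define $\tau^* = \inf\{t\le t_0 : u(\cdot,s)\equiv M \text{ on } \Omega \text{ for all } s\in[t,t_0]\}$ restricted to times for which the relevant points stay in $\mathcal R(x_0,t_0)$, and argue $\tau^*$ cannot be an interior time of the rectangle: if $u(\cdot,t)\equiv M$ on $\Omega$ for $t\in(\tau^*,t_0]$, then by upper semicontinuity $u(\cdot,\tau^*)\equiv M$ as well, and then the vertical propagation theorem applied at points $(y,\tau^*)$ forces maxima strictly below $\tau^*$; combined with a second application of Theorem~\ref{th::trans_supp} at each such time slice (legitimate since $(E')$ holds and $\Omega\subset\overline{\bigcup A_n}$), one spreads the maximum across the full slice $\Omega\times\{s\}$ for $s$ slightly below $\tau^*$, contradicting minimality. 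This shows $u\equiv M$ on $\Omega\times[t_0-a_0,t_0]$, hence on $\mathcal R(x_0,t_0)$.

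The main obstacle is the bookkeeping of the interplay between the \emph{local} nature of vertical propagation (it only guarantees a maximum somewhere in a small rectangle, not on a whole time slice) and the \emph{global} horizontal spreading. One must be careful that the rectangles produced by the vertical theorem at successive steps can be chosen with a uniform time-height, so that finitely many descents reach the bottom face $\{t=t_0-a_0\}$; this uses that $(N')$–$(S')$ hold at \emph{every} point $(y,s)\in Q_T$ with uniform constants on the compact set $\overline{\mathcal R(x_0,t_0)}$, so the radius $r$ and the admissible $\varepsilon_0$ in $(S')$ do not degenerate. A secondary point requiring care is that at each intermediate time slice one genuinely re-enters the hypotheses of Theorem~\ref{th::trans_supp}: the subsolution property on $\Omega\times(0,T)$ is time-independent in its structural assumptions, so the horizontal spreading argument applies verbatim at time $s$ once a single maximum point in $\Omega\times\{s\}$ is known, and the covering condition $\Omega\subset\overline{\bigcup_{n\ge0}A_n}$ is what converts that single point into the whole slice.
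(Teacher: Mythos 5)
Your overall strategy is the right one — combine horizontal propagation by translations of measure supports (Theorem \ref{th::trans_supp}, Remark \ref{rk::Omega}) with local vertical propagation, first filling the slice $\{t=t_0\}$ and then descending in time. The paper gives no written proof of this Proposition (it is stated without argument and then invoked inside the proof of the Strong Maximum Principle theorem), so the proposal has to be judged on its own.

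The gap is in the step ``contradicting minimality'' of $\tau^*$. With $\tau^*=\inf\{t\le t_0 : u(\cdot,s)\equiv M\ \forall s\in[t,t_0]\}$, applying vertical propagation at a point $(y,\tau^*)$ and then Theorem \ref{th::trans_supp} yields $u(\cdot,t')\equiv M$ for \emph{one} time $t'<\tau^*$. This does not lower $\tau^*$: the set defining $\tau^*$ requires $u(\cdot,s)\equiv M$ for \emph{every} $s\in[t',t_0]$, and you have said nothing about the times in $(t',\tau^*)$. The vertical propagation theorem produces a single maximum point per rectangle, not a whole interval of slices, and the ``uniform time-height, finitely many descents'' remark has the same flaw: after $n$ descents you know $n$ slices, not the interval they span.

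The fix is a first-hitting-time argument along a single vertical fiber, which makes the contradiction genuine. Suppose $u(y_1,s_1)<M$ for some $(y_1,s_1)\in\mathcal R(x_0,t_0)$; by upper semicontinuity one may take $y_1\in\Omega$. Since $u(y_1,t_0)=M$ by the horizontal propagation at $t_0$, and $\{t\in[s_1,t_0]: u(y_1,t)=M\}$ is nonempty and closed (again by upper semicontinuity), it has a minimum $t^*>s_1$. Apply vertical propagation at the global maximum point $(y_1,t^*)\in Q_T$ with a rectangle of time-height $a_0'<t^*-s_1$ and spatial extent small enough to stay in $\Omega$; this produces $P=(z,t')$ with $s_1<t'<t^*$ and $u(P)=M$. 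Horizontal propagation at time $t'$ then gives $u(y_1,t')=M$, contradicting the minimality of $t^*$. Hence $u\equiv M$ on $\mathcal R(x_0,t_0)$. The rest of your write-up (applicability of Theorem \ref{th::trans_supp} under $(E')$ and the covering hypothesis $\Omega\subset\overline{\bigcup_{n\ge 0}A_n}$) is correct.
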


\begin{prop}
Let $u\in USC(\R^N\times[0,T])$ be a viscosity subsolution of (\ref{ePIDEs}) that attains a global maximum at $P_0=(x_0,t_0)\in Q_T$. If $F$ satisfies $(E)$, $(N) - (N')$, and $(S)-(S')$, then u is constant in any rectangle $\mathcal R(x_0,t_0)\subseteq\overline\Omega\times[0,t_0]$.
\end{prop}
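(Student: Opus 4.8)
The plan is to combine the horizontal propagation result (Theorem~\ref{th::horizprop}, available here because $F$ satisfies $(E),(N),(S)$) with the local vertical propagation theorem (available because $F$ satisfies $(E),(N'),(S')$) through a continuation argument in the time variable. Fix $M=u(x_0,t_0)=\max u$ and a rectangle $\mathcal R=\mathcal R(x_0,t_0)\subseteq\overline\Omega\times[0,t_0]$; it is enough to treat rectangles whose spatial box is compactly contained in $\Omega$ and with $t_0-a_0>0$, the general closed rectangle being recovered afterwards by a monotone exhaustion together with the upper semicontinuity of $u$ and the bound $u\le M$. Introduce the set of good times
$$\mathcal S=\{\,t\in[t_0-a_0,t_0]\ :\ u(\cdot,t)\equiv M\ \text{on the slice}\ \mathcal R\cap\{t\}\,\}.$$

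First I would record two facts. (i) If $u$ attains the value $M$ somewhere on the slice $\mathcal R\cap\{t\}$ with $t\in(0,T]$, then in fact $t\in\mathcal S$: that point belongs to $Q_T$, so Theorem~\ref{th::horizprop} makes $u$ constant equal to $M$ on its horizontal component, which contains the whole slice (connected, and contained in $\Omega\times\{t\}$ by the compact inclusion of the box). (ii) $\mathcal S$ is closed: if $t_n\in\mathcal S$ and $t_n\to t$, then $u(x_0,t_n)=M$ for all $n$, so by upper semicontinuity $u(x_0,t)\ge M$, hence $u(x_0,t)=M$, and $t\in\mathcal S$ by (i). Moreover $t_0\in\mathcal S$, since $u$ has a global maximum at $(x_0,t_0)\in Q_T$.

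The decisive step is the input from vertical propagation: every $t\in\mathcal S$ with $t>t_0-a_0$ is accumulated from below by $\mathcal S$. Indeed $u(x_0,t)=M$ and $(x_0,t)\in Q_T$, so $u$ attains a global maximum there; applying the vertical propagation theorem to a rectangle centered at $(x_0,t)$ whose spatial half-widths are small enough to keep it inside $\mathcal R$ and whose time-depth is an arbitrary $\delta\in(0,t)$, we get a point $P=(y,s)\in\mathcal R$ with $u(P)=M$ and $s\in[t-\delta,t)$; by (i), $s\in\mathcal S$. It then remains to invoke an elementary topological lemma: a closed set $\mathcal S\subseteq[a,b]$ with $b\in\mathcal S$ such that every point of $\mathcal S\cap(a,b]$ is accumulated from below by $\mathcal S$ must be all of $[a,b]$ — if $t^*\in[a,b]\setminus\mathcal S$ and $\beta=\inf\{t\in\mathcal S:t>t^*\}$, then $\beta\in\mathcal S$ by closedness, $\beta>t^*\ge a$, and $\mathcal S\cap(t^*,\beta)=\emptyset$, contradicting accumulation from below at $\beta$. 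Hence $\mathcal S=[t_0-a_0,t_0]$, i.e. $u\equiv M$ on every slice of $\mathcal R$, which is the claim.

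The main obstacle is that the vertical propagation theorem only produces a single new maximum point strictly below a prescribed time (one per thin rectangle), not a whole interval of good times; so the naive argument "let $\tau=\inf\{\,t:u\not\equiv M\ \text{on}\ \mathcal R\cap\{t\}\,\}$ and contradict its minimality" does not close by itself, and one is forced to encode the information as the two structural properties of $\mathcal S$ above and finish with the topological lemma. The remaining point — ensuring that the slices entering Theorem~\ref{th::horizprop} are connected and interior to $\Omega$, and then passing from compactly-contained rectangles to an arbitrary $\mathcal R\subseteq\overline\Omega\times[0,t_0]$ — is routine and handled by exhaustion and semicontinuity.
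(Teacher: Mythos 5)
Your proof is correct and takes the approach the paper leaves implicit for this proposition: combine Theorem~\ref{th::horizprop} for horizontal propagation with the local vertical-propagation theorem, first reducing to rectangles compactly contained in $\Omega\times(0,t_0]$ and then passing to the closed rectangle $\mathcal R\subseteq\overline\Omega\times[0,t_0]$ by exhaustion together with the upper semicontinuity of $u$ and the global bound $u\le M$. The one genuine subtlety --- that vertical propagation produces only a single new maximum point strictly below the prescribed time, not a whole interval --- is handled correctly by your closed set $\mathcal S$ of good times together with the elementary topological lemma (closed, contains $t_0$, every point of $\mathcal S\cap(t_0-a_0,t_0]$ accumulated from below), which is exactly what is needed to close the continuation argument.
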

From the horizontal and local vertical propagation of maxima one can derive the Strong Maximum Principle. The proof is based on geometric arguments and is identical to that for fully nonlinear second order partial differential equations.
\smallskip

\begin{theorem}[Strong Maximum Principle]
Assume the family of measures $\{\mu_x\}_{x\in\Omega}$ satisfies assumption $(M)$. Let $u\in USC(\R^N\times[0,T])$ be a viscosity subsolution of (\ref{ePIDEs}) that attains a global maximum at $P_0=(x_0,t_0)\in Q_T$. If $F$ satisfies $(E)$, $(S) - (S')$, and $(N)-(N')$, then u is constant in $S(P_0)$.
\end{theorem}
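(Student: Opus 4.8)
The plan is to deduce the global statement from the two propagation mechanisms already established: horizontal propagation in the slices $\Omega\times\{t\}$ (Theorem~\ref{th::horizprop}, under $(E),(N),(S)$) and local vertical propagation in rectangles below the maximum point (the theorem under $(E),(N'),(S')$). Since the hypotheses here are exactly $(E),(S)-(S'),(N)-(N')$, both mechanisms are available, and the two immediately preceding propositions already tell us that $u$ is constant on any rectangle $\mathcal R(x_0,t_0)\subseteq\overline\Omega\times[0,t_0]$. The remaining work is purely topological: to upgrade ``constant on every admissible rectangle through $(x_0,t_0)$'' to ``constant on $S(P_0)$'', the set of points joinable to $P_0$ by a simple continuous curve inside $Q_T$.

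First I would set $M=u(x_0,t_0)$ and define $\Sigma=\{P\in Q_T:\ u(P)=M\}$. By upper semicontinuity $\Sigma$ is closed in $Q_T$, and by the maximality of $M$ it suffices to show $S(P_0)\subseteq\Sigma$. The key step is a local one: I claim that for every $P=(x,t)\in\Sigma\cap Q_T$ there is a neighborhood $V$ of $P$ in $Q_T$ such that $\Sigma\supseteq\{(y,s)\in V:\ s\le t\}$, i.e. $\Sigma$ contains a full ``lower half-neighborhood'' of each of its points. This follows by applying the two preceding propositions with $P$ in place of $P_0$: vertical propagation gives points of $\Sigma$ at all times slightly below $t$ in a small rectangle, and then horizontal propagation at each such lower time $s$ spreads $M$ across the connected slice component, so that a whole small rectangle $\mathcal R(x,t)$ (minus possibly its top face, where horizontal propagation still applies at time $t$ itself through $C(P)$) lies in $\Sigma$.

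Next I would propagate along a curve. Given $Q\in S(P_0)$, pick a simple continuous curve $\sigma:[0,1]\to Q_T$ with $\sigma(0)=P_0$, $\sigma(1)=Q$. Let $\tau=\sup\{a\in[0,1]:\ \sigma([0,a])\subseteq\Sigma\}$; this set is nonempty ($0$ belongs to it) and $\sigma([0,\tau])\subseteq\Sigma$ by closedness of $\Sigma$ and continuity of $\sigma$. If $\tau<1$, I apply the half-neighborhood property at $\sigma(\tau)$; since a simple curve cannot be monotone in time only downward in a way that escapes the lower half-neighborhood at every step, one uses that the curve enters both ``forward in the curve parameter'' directions through the rectangle, and the standard argument (identical to the PDE case, as the paper notes) shows $\sigma(a)\in\Sigma$ for $a$ slightly past $\tau$, contradicting maximality. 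Hence $\tau=1$ and $Q\in\Sigma$.

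The main obstacle is the curve-following step: a simple continuous curve may oscillate in time, so one cannot naively say ``keep going down.'' The honest fix is the classical one for parabolic strong maximum principles — cover $\sigma([0,1])$ by finitely many of the half-neighborhoods $V_i$ guaranteed above, order them along the curve, and at each transition use that consecutive neighborhoods overlap in a set containing points at a common time level, so horizontal propagation bridges them and vertical propagation lets one move to earlier times when the curve forces it; because $S(P_0)\subseteq Q_T=\Omega\times(0,T]$ the time coordinate stays bounded below by $0$, so only finitely many downward steps are needed. This is exactly the geometric argument referred to in the statement as ``identical to that for fully nonlinear second order partial differential equations,'' so I would invoke it rather than reproduce it, having verified that the only analytic inputs it needs — horizontal and vertical propagation of maxima — hold here under $(E),(S)-(S'),(N)-(N')$.
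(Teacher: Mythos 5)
Your high-level plan matches the paper's: both rely on the two preceding propositions (which give $u\equiv M$ on every admissible rectangle through a maximum point) and finish with a geometric continuation argument along a curve. The divergence is in that last geometric step, and yours has a real gap which you yourself flag. The lower-half-neighborhood you construct at $\sigma(\tau)$ gives no information about $\sigma(a)$ for $a$ just past $\tau$ whenever the curve immediately moves upward in time, so the sup argument does not close; and your ``honest fix'' (finite covering by half-neighborhoods plus bridging at common time levels) is left as a sketch rather than carried through. As written, this part of the proposal is incomplete.

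The paper sidesteps the difficulty by a simpler device. It chooses the curve $\gamma$ from $Q$ to $P_0$ so that the time coordinate is nondecreasing from $Q$ to $P_0$ (the standard parabolic convention built into $S(P_0)$), and then runs a first-point-of-contact argument: supposing $u(Q)<M$ for some $Q\in S(P_0)$, let $P_1$ be the first point on $\gamma$ (walking from $Q$) with $u(P_1)=M$; this exists by upper semicontinuity of $u$, and $P_1\ne Q$. A small rectangle $\mathcal R(P_1)$ with top face at the time level of $P_1$ then contains a nontrivial arc of $\gamma$ strictly before $P_1$ --- this is exactly where monotonicity of time along $\gamma$ is used --- and the rectangle proposition forces $u\equiv M$ on that arc, contradicting the choice of $P_1$. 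Because the curve never goes up in time as one walks back from $P_1$, the oscillation issue you ran into never arises. Replacing your sup/half-neighborhood scheme by this monotone-curve/first-contact argument closes the proof; the rest of your proposal (the reduction to the two preceding propositions and the bookkeeping of hypotheses $(E)$, $(N)$--$(N')$, $(S)$--$(S')$) is correct and aligned with the paper.
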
\label{th:SMaxP_nondeg}
\smallskip

\begin{proof}
Suppose that $u\not\equiv u(P_0)$ in $S(P_0)$. Then there exists a point $Q\in S(P_0)$ such that $u(Q)<u(P_0)$. Then, we can connect $Q$ to $P_0$ by a simple continuous curve $\gamma$ lying in $S(P_0)$ such that the temporal coordinate $t$ us nondecreasing from $Q$ to $P_0$. On the curve $\gamma$ there exists a point $P_1$ take takes the maximum value $u(P_1)= u(P_0)$ and at the same time, for all the points $P$ on $\gamma$ between $Q$ and $P_1$ we have $u(P)<u(P_0)$. We construct a rectangle
$$
x_i^1 - a\leq x_i\leq  x_i^1+a, i=1,n, \ t^1-a<t<t_1
$$
where $(x_i^1, t^1)$ are the coordinates of $P_1$ and $a$ sufficiently small such that the rectangle does not exceed the domain $\Omega$.
Applying the vertical propagation of maxima we deduce that $u\equiv u(P_0)$ in this rectangle. Thus, the function is constant on the arc of the curve lying in this rectangle. But this contradicts the definition of $P_1$.

\end{proof}
\smallskip

Similarly the following holds.

\begin{theorem}[Strong Maximum Principle]
Let $u\in USC(\R^N\times [0,T])$ be a viscosity subsolution of (\ref{ePIDEs}) in $\R^N\times (0,T)$ that attains a global maximum at $(x_0,t_0)\in \R^N\times (0,T]$. Assume the family of measures $\{\mu_x\}_{x\in\Omega}$ satisfies assumption $(M)$ and $F$ satisfies $(E')$, $(S')$ and $(N')$.
Then u is constant in  $\overline{\bigcup_{n\geq 0}A_n}\times[0,t_0]$ with $\{A_n\}_n$ given by (\ref{trans_supp}).
\end{theorem}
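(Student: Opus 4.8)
The plan is to combine the horizontal propagation obtained in Theorem~\ref{th::trans_supp} with the local vertical propagation of maxima, following the geometric scheme used in the proof of the preceding Strong Maximum Principle, while keeping track of the fact that here the horizontal propagation only spreads the maximum along the iterated sets $A_n$, not inside full rectangles. Throughout put $M:=u(x_0,t_0)$ and $\mathcal D:=\bigcup_{n\ge0}A_n$.

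First I would record the \emph{top slice}: since $F$ satisfies $(E')$ and $u$ attains its global maximum at $(x_0,t_0)\in\R^N\times(0,T)$, Theorem~\ref{th::trans_supp} gives $u(\cdot,t_0)\equiv M$ on $\overline{\mathcal D}$. The decisive point is that the family $(A_n)_n$ depends only on $x_0$ and on $\{\mu_x\}$ and not on time, so the same conclusion holds at \emph{every} level $s\in(0,T)$ for which $u(x_0,s)=M$, namely $u(\cdot,s)\equiv M$ on $\overline{\mathcal D}$. Granting this, by upper semicontinuity it suffices to prove $u\equiv M$ on $\mathcal D\times(0,t_0)$: the closure in the $x$-variable is recovered from $M=\limsup_{x_n\to x_\infty}u(x_n,s)\le u(x_\infty,s)\le M$ whenever $\mathcal D\ni x_n\to x_\infty\in\overline{\mathcal D}$; the level $t=t_0$ is the top slice; and the level $t=0$ follows from upper semicontinuity from above. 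Since $u(\cdot,s)\equiv M$ on $\mathcal D$ is implied by $u(x_0,s)=M$, the whole statement reduces to the \emph{vertical claim}: $u(x_0,s)=M$ for every $s\in[0,t_0]$.

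To prove the vertical claim I would argue by contradiction, setting
$$\tau^*:=\inf\{\tau\in[0,t_0]\ :\ u(x_0,t)=M\ \text{ for all }t\in[\tau,t_0]\}$$
and assuming $\tau^*>0$. Upper semicontinuity from above gives $u(x_0,\tau^*)=M$, so $(x_0,\tau^*)$ is again a global maximum point in $Q_T$, and one checks that there is a sequence $t_k\uparrow\tau^*$ with $u(x_0,t_k)<M$. I would then rerun the construction of the local vertical propagation theorem at the point $(x_0,\tau^*)$: with $\lambda>0$ provided by $(N')$, take $v(x,t)=1-e^{-h(x,t)}$ with $h(x,t)=\frac{1}{2}|x-x_0|^2+\lambda(t-\tau^*)$, for which the two integral estimates of that proof yield $\mathcal I[x,t,v]\le e^{-h(x,t)}\tilde C_\mu$; then $(N')$ followed by $(S')$ shows that $\varepsilon v$ is a strict supersolution of (\ref{ePIDEs}) in a small set $S\subset B((x_0,\tau^*),r)\cap\{v<0\}$ for $r$ and $\varepsilon$ small enough. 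Establishing $u-M\le\varepsilon v$ on $\partial S$ and propagating the inequality inside $S$ as in the cited proof, one gets that $u-\varepsilon v$ attains a maximum at $(x_0,\tau^*)$, and the viscosity subsolution inequality at that point contradicts the strict supersolution property. This forces $\tau^*=0$, and with the reduction above it gives $u\equiv M$ on $\overline{\mathcal D}\times[0,t_0]$.

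The main obstacle is precisely the vertical claim. The local vertical propagation theorem only produces \emph{some} maximum point strictly below a given maximum point, with no control on its horizontal position, so it does not by itself force $u(x_0,\cdot)\equiv M$. The delicate point in the scheme above is the boundary inequality $u-M\le\varepsilon v$ on the part of $\partial B((x_0,\tau^*),r)$ where $v<0$: one has $u<M$ on the fibre $\{x_0\}\times[t_k,\tau^*)$, but a competing maximum of $u-\varepsilon v$ could in principle form slightly below and off the fibre. Ruling this out is where the horizontal propagation of Theorem~\ref{th::trans_supp} must be fed back in — the constancy of $u(\cdot,s)$ on $\overline{\mathcal D}$ for $s\in[\tau^*,t_0]$, together with upper semicontinuity, constrains the localizing geometry — and turning this into a rigorous argument is the heart of the proof.
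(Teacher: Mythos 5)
Your reduction is the right first move: Theorem~\ref{th::trans_supp} depends only on the spatial variable through $x_0$, so once you know $u(x_0,s)=M$ for every $s\in(0,t_0]$ you can apply it slice by slice and recover constancy on $\overline{\bigcup_n A_n}\times[0,t_0]$ (the endpoints $s=0$ and, when $t_0=T$, $s=t_0$ being handled by upper semicontinuity exactly as you say). Reducing the statement to the \emph{vertical claim} $u(x_0,\cdot)\equiv M$ on $[0,t_0]$ is clean and correct.

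The problem is that you do not prove the vertical claim, and you say so yourself (``turning this into a rigorous argument is the heart of the proof''). The obstruction is real. Running the vertical-propagation construction at $(x_0,\tau^*)$ requires $u<M$ on the part of $\partial B((x_0,\tau^*),r)$ where $v<0$; nothing in the hypotheses gives you this, because the vertical propagation theorem only produces \emph{some} maximum point $(x',t')$ with $t'<\tau^*$ and no control on $x'$, and Theorem~\ref{th::trans_supp} applied at $(x',t')$ yields constancy of $u(\cdot,t')$ on $\overline{\bigcup_n A_n(x')}$, the hull seeded at $x'$, which need not contain $x_0$. (Concretely: if every $\mathrm{supp}(\mu_x)$ lies in a fixed half-space $\{z_1\ge 1\}$, then $\overline{\bigcup_n A_n(x')}\subset\{y:\,y_1\ge x'_1\}$, which misses $x_0$ whenever $x'_1>(x_0)_1$.) So the ``feedback'' from horizontal propagation that you want to invoke does not close the loop. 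A smaller inaccuracy in the same step: from $\tau^*>0$ you only obtain a sequence $t_k\uparrow\tau^*$ with $u(x_0,t_k)<M$, not $u<M$ on the whole fibre $\{x_0\}\times[t_k,\tau^*)$, so even the one-dimensional boundary control you state is not available.

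It is worth knowing that the paper gives no proof of this theorem either; it is preceded by ``Similarly the following holds,'' and the template is the proof of the nondegeneracy Strong Maximum Principle, which relies on the Propositions asserting constancy in rectangles and then a curve argument. But the Proposition that matches the hypotheses here carries the extra assumption $\Omega\subset\overline{\bigcup_n A_n}$, and it is precisely that assumption which makes the ``vertical then horizontal'' iteration close (any maximum point produced below $t_0$ then spreads, at its own level, over all of $\Omega$, hence over $x_0$). The statement you are asked to prove drops that assumption. So the gap you flagged is not a matter of routine detail: you either need an argument that genuinely localizes the vertical step onto the fibre $\{x_0\}\times(0,t_0]$ without the rectangle Proposition, or you need to supply and justify a hypothesis of the $\Omega\subset\overline{\bigcup_n A_n}$ type. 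As written, the proposal is an outline with the central step missing.
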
\label{th:SMAXP_supptrans}

\section{Strong Maximum Principle for L\'evy-It\^o operators}\label{sec:StrongMaxPrinc_LI}

The results established for general nonlocal operators remain true for  L\'evy-It\^o operators. We translate herein the corresponding assumptions and theorems on the Strong Maximum Principle for second order integro-differential equations associated to L\'evy-It\^o operators
$$
\mathcal{J}[x,t,u]=\int_{\R^N} (u(x+j(x,z),t)-u(x,t)-Du(x,t)\cdot j(x,z) 1_B(z))\mu(dz),
$$
where $\mu$ is a L\'evy measure.  
In the sequel we assume  that $F$ respects the scaling assumption $(S)$ and the nondegeneracy condition  
\begin{itemize}
\item [($N_{LI}$)] For any $\bar x \in \Omega $ and $0<t_0<T$ there exist $R_0>0$ small enough and $0<\eta <1$ such that for any $0 < R < R_0$ and $c >0$
	        $$ F(x,t,p,I-\gamma p\otimes p,\tilde C_\mu - 
	     	c\gamma\int_{\mathcal{C}_{\eta,\gamma}(p)}\big|p\cdot j(x,z)\big|^2 \mu (dz))\rightarrow \infty \hbox{ as } \gamma\rightarrow \infty$$
	     	uniformly for $ |x- \bar x| \leq R$ and $|t-t_0| \leq R$, $R/2 \leq |p| \leq R$,
		where $$\mathcal{C}_{\eta,\gamma}(p) = \{z; \space\ (1-\eta)|j(x,z)||p|\leq|p\cdot j(x,z)|\leq 1/\gamma\}.$$ 
\end{itemize} 
and that the L\'evy measure $\mu$ satisfies assumptions
\begin{itemize}
\item [($M_{LI}$)]  there exists a constant $\tilde C_\mu>0$ such that for any $x\in\Omega$, 
$$
\int_B|j(x,z)|^2\mu(dz)+\int_{\R^N\setminus B}\mu(dz)\leq \tilde C_\mu;
$$
\item [($M^c_{LI}$)] {For any $x\in\Omega$ there exist $1<\beta<2$, $0\leq\eta<1$ and a constant $C_\mu(\eta)>0$ such that the following holds
$$ \int_{\mathcal C_{\eta,\gamma}(p)}|j(x,z)|^2 \mu(dz)\geq C_\mu(\eta)\gamma^{\beta-2}, \forall \gamma\geq 1.$$}
\end{itemize} 

Theorem \ref{th::trans_supp} holds for L\'evy-It\^o operators, since L\'evy It\^ o measures can be written as push-forwards of some L\'evy measure $\tilde \mu$
$$ \mu_x = (j(x,\cdot)_*(\tilde\mu)) $$
defined for measurable functions $\phi$ as 
$$ \int_{\R^N}\phi(x)\mu_x(dz) = \int_{\R^N}\phi(j(x,z))\tilde\mu(dz). $$
Hence it is sufficient to replace $\hbox{supp}(\mu_x) = j(x,\hbox{supp}(\tilde\mu))$ in order to get the result.

\begin{theorem}
Assume the L\'evy measure $\mu$ satisfies assumption $(M_{LI})$. Let $u\in USC(\R^N\times [0,T])$ be a viscosity subsolution of (\ref{ePIDEs}) that attains a maximum at $P_0=(x_0,t_0)\in Q_T$. If $F$ satisfies $(E), (S),$ and $(N_{LI})$ then u is constant in $C(P_0)$. 
\end{theorem}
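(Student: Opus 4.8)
The strategy is to mirror, line by line, the proof of Theorem \ref{th::horizprop} for general nonlocal operators, replacing the measure $\mu_x$ by the pushed-forward measure $j(x,\cdot)_*(\mu)$ and the increment $z$ by the jump $j(x,z)$ throughout. First I would argue by contradiction: suppose there is a point $P_1=(x_1,t_0)$ in the same horizontal component as $P_0$ with $u(P_1)<u(P_0)=M$, and, exactly as in step 1 of Theorem \ref{th::horizprop}, use upper semicontinuity to build a ball $B(\bar x,R)$ with $R<R_0$ on which $u(\cdot,t_0)<M$ but with a boundary point $x^*$ where $u(x^*,t_0)=M$; then thicken it to the ellipsoid $\mathcal E_R(\bar x,t_0)=\{(x,t);\ |x-\bar x|^2+\lambda|t-t_0|^2<R^2\}$ so that $u<M$ on the portion with $|x-\bar x|\le R/2$. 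The auxiliary function is the same, $v(x,t)=e^{-\gamma R^2}-e^{-\gamma d(x,t)}$ with $d(x,t)=|x-\bar x|^2+\lambda|t-t_0|^2$, and its derivatives are unchanged.

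The only genuinely new ingredient is the Lévy–Itô analogue of Proposition \ref{prop::NL_est}, i.e. the estimate
$$
\mathcal J[x,t,v]\leq 2\gamma e^{-\gamma d(x,t)}\Bigl\{\tilde C_\mu - c\gamma\int_{\mathcal C_{\eta,\gamma}(x-\bar x)}\bigl|(x-\bar x)\cdot j(x,z)\bigr|^2\mu(dz)\Bigr\}
$$
on the subdomain $\{R/2<|x-\bar x|<R\}$, where now $\mathcal C_{\eta,\gamma}(p)=\{z;\ (1-\eta)|j(x,z)||p|\le|p\cdot j(x,z)|\le 1/\gamma\}$. I would prove this by splitting $\mathcal J[x,t,v]=\mathcal T^1+\mathcal T^2+\mathcal T^3$ over $\{|z|\ge 1\}$, $B\setminus\mathcal C_{\eta,\gamma}$, and $\mathcal C_{\eta,\gamma}$ respectively, and running the three lemmas of Proposition \ref{prop::NL_est} verbatim with $z\mapsto j(x,z)$: the bound $|j(x,z)|\le C_0|z|$ guarantees $\mathcal C_{\eta,\gamma}(x-\bar x)$ sits inside $B$ for $\gamma$ large, assumption $(M_{LI})$ replaces $(M)$ in the $\mathcal T^1$ and $\mathcal T^2$ bounds, and the convexity/Taylor inequalities from the Appendix (Lemmas \ref{lemma:exp_ineq1}, \ref{lemma:exp_ineq2}) apply to the composed increments. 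The elementary estimate $|2(x-\bar x)\cdot j(x,z)+|j(x,z)|^2|\ge|(x-\bar x)\cdot j(x,z)|$ on $\mathcal C_{\eta,\gamma}$, for $\gamma$ beyond a threshold depending on $R,\eta,C_0$, is proved just as before since on that set $|j(x,z)|\le 2|(x-\bar x)\cdot j(x,z)|/(R(1-\eta))$ and $|j(x,z)|\le 2/(\gamma R(1-\eta))$.

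With that estimate in hand, nondegeneracy $(N_{LI})$ shows that for $\gamma$ large the quantity $F(x,t,x-\bar x,\,I-2\gamma(x-\bar x)\otimes(x-\bar x),\,\tilde C_\mu-c\gamma\int_{\mathcal C_{\eta,\gamma}}|(x-\bar x)\cdot j(x,z)|^2\mu(dz))$ is positive uniformly on $\mathcal D_R(\bar x,t_0)=\{(x,t)\in\mathcal E_R;\ |x-\bar x|>R/2\}$; combining with the explicit form of $v_t,Dv,D^2v,\mathcal J[x,t,v]$ and the scaling assumption $(S)$ exactly as in steps 3–5 of Theorem \ref{th::horizprop} makes $\varepsilon v$ a strict supersolution of (\ref{ePIDEs}) in $\mathcal D_R$ for $\varepsilon$ small. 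Since $v\ge 0$ outside $\mathcal E_R$ and $u<M$ on $\mathcal E_R\setminus\mathcal D_R$, one gets $u\le u(x^*,t_0)+\varepsilon v$ outside $\mathcal D_R$ and then, by the usual viscosity maximum argument against the strict supersolution, also inside $\mathcal D_R$; hence $u-\varepsilon v$ attains a global maximum at $(x^*,t_0)\in\partial\mathcal D_R$, contradicting that $\varepsilon v$ is a strict supersolution there. Therefore no such $P_1$ exists and $u$ is constant on $C(P_0)$. The main obstacle is purely bookkeeping: checking that every inequality in Proposition \ref{prop::NL_est}'s three lemmas survives the substitution $z\mapsto j(x,z)$, in particular that the sublinearity $|j(x,z)|\le C_0|z|$ is exactly what is needed to keep $\mathcal C_{\eta,\gamma}(x-\bar x)\subset B$ and to control $\mathcal T^2$ by $\int_B|j(x,z)|^2\mu(dz)\le\tilde C_\mu$.
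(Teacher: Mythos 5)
Your proposal mirrors the paper's own argument: the paper literally says ``the proof is technically the same'' as Theorem~\ref{th::horizprop} and then only spells out the L\'evy--It\^o analogue of the nonlocal estimate, splitting $\mathcal J[x,t,v]$ into $\mathcal T^1,\mathcal T^2,\mathcal T^3$ over $\{|z|\ge 1\}$, $B\setminus\mathcal C_{\eta,\gamma}(x-\bar x)$, $\mathcal C_{\eta,\gamma}(x-\bar x)$ with $z$ replaced by $j(x,z)$ exactly as you do, and invoking the Appendix exponential lemmas (whose L\'evy--It\^o versions are Lemmas~\ref{lemma:exp_ineq1_LI} and \ref{lemma:exp_ineq2_LI}). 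One small imprecision: the inequality $|j(x,z)|\le C_0|z|$ bounds $|j(x,z)|$ \emph{above} by $|z|$, so it does \emph{not} by itself force $\mathcal C_{\eta,\gamma}(x-\bar x)\subset B$ (one would need a lower bound $|j(x,z)|\ge c_0|z|$ for that); the harmless fix, which the paper also leaves tacit, is to define the cone as $\mathcal C_{\eta,\gamma}(x-\bar x)\cap B$ so that the three-way decomposition of $\R^N$ is genuine and the compensation term $1_B(z)$ is present on the $\mathcal T^3$ piece.
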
\label{horiz_LI}%

\begin{proof}
Since the proof is technically the same, we just point out the main differences, namely the estimate of the nonlocal term.
Consider as before the smooth function 
$$
v(x,t)= e^{-\gamma R^2} - e^{-\gamma d(x,t)}
$$
where $d(x,t)=|x-\bar x|^2 + \lambda |t-t_0|^2,$ for large $\gamma>\gamma_0$. 
Write similarly the nonlocal term as the sum 
\begin{eqnarray*}
\mathcal{J}[x,t,v]=\mathcal{T}^1[x,t,v]+\mathcal{T}^2[x,t,v]+\mathcal{T}^3[x,t,v]
\end{eqnarray*}
where
\begin{eqnarray*}
&&  \mathcal{T}^1[x,t,v]=\int_{|z|\geq 1} (v(x+j(x,z),t)-v(x,t))\mu(dz) \\
&&  \mathcal{T}^2[x,t,v]=\int_{B\setminus\mathcal{C}_{\eta,\gamma}(x-\bar x)}(v(x+j(x,z),t)-v(x,t)-Dv(x,t)\cdot j(x,z))\mu(dz)\\
&&  \mathcal{T}^3[x,t,v]=\int_{\mathcal{C}_{\eta,\gamma}(x-\bar x)}(v(x+j(x,z),t)-v(x,t)-Dv(x,t)\cdot j(x,z))\mu(dz)
\end{eqnarray*}
with $$\mathcal{C}_{\eta,\gamma} (x-\bar x )= \{(1-\eta)|j(x,z)||x-\bar x |\leq|(x-\bar x )\cdot j(x,z)|\leq 1/\gamma\}.$$

\noindent Then the nonlocal operator satisfies for all $(x,t)\in\mathcal D_R$
\begin{eqnarray*}
&& 
\mathcal{T}^1[x,t,v]\leq e^{-\gamma d(x,t)}\int_{|z|\geq 1} \mu(dz).\\
&&
\mathcal{T}^2[x,t,v] \leq \gamma e^{-\gamma d(x,t)}\int_{B}|j(x,z)|^2\mu(dz).\\
&&
\mathcal{T}^3[x,t,v]\leq e^{-\gamma d(x,t)}\big[\gamma\int_{B}|j(x,z)|^2\mu(dz)
- 2 c\gamma^2\int_{\mathcal C_{\eta,_\gamma}(x-\bar x )}\big|(x-\bar x)\cdot j(x,z)\big|^2 \mu (dz) \big].
\end{eqnarray*} 
from where we get the global estimation
\begin{eqnarray*}
\mathcal{J}[x,t,v]   & \leq & e^{-\gamma d(x,t)}\big[\int_{B} \mu(dz)+2\gamma \int_{B}|j(x,z)|^2\mu(dz) \\
&& \hspace{3cm}		- 2 c\gamma^2\int_{\mathcal C_{\eta,_\gamma}(x-\bar x )}\big|(x-\bar x)\cdot j(x,z)\big|^2 \mu (dz) \big]\\
& \leq & 2\gamma e^{-\gamma d(x,t)} \big[\tilde C_\mu 
		-  c\gamma\int_{\mathcal C_{\eta,\gamma}(x-\bar x )}\big|(x-\bar x)\cdot j(x,z)\big|^2 \mu (dz)\big].
\end{eqnarray*} 
\end{proof}

Vertical propagation of maxima holds under the same conditions.
\begin{theorem}
Let $\mu$ be a L\'evy measure satisfying $(M_{LI})$ and $u\in USC(\R^N\times [0,T])$ be a viscosity subsolution of (\ref{ePIDEs}) that attains a maximum at $P_0=(x_0,t_0)\in Q_T$. If $F$ satisfies $(E), (S')$ and $(N')$ then for any rectangle $\mathcal R (x_0,t_0)$, $\mathcal R_0 (x_0,t_0)$ contains a point $P\not=P_0$ such that $u(P)=u(P_0)$.
\end{theorem}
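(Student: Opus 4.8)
The plan is to mirror the proof of the general-nonlocal vertical propagation theorem, replacing every occurrence of the kernel $\mu_x(dz)$ acting on $u(x+z,t)-u(x,t)-Du(x,t)\cdot z$ by the L\'evy–It\^o kernel $\mu(dz)$ acting on $u(x+j(x,z),t)-u(x,t)-Du(x,t)\cdot j(x,z)$. Arguing by contradiction, suppose there is a rectangle $\mathcal R(x_0,t_0)$ with $\mathcal R_0(x_0,t_0)\subseteq\Omega\times[0,t_0)$ on which $u<M=u(x_0,t_0)$. Introduce the same auxiliary function $v(x,t)=1-e^{-h(x,t)}$ with $h(x,t)=\tfrac12|x-x_0|^2+\lambda(t-t_0)$, $\lambda>0$ to be chosen, and record the derivatives exactly as in the general case; in particular $v(x_0,t_0)=0$, $v_t(x_0,t_0)=\lambda$, $Dv(x_0,t_0)=0$, $D^2v(x_0,t_0)=I$.

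First I would establish the nonlocal estimate $\mathcal J[x,t,v]\le e^{-h(x,t)}\tilde C_\mu$, splitting $\mathcal J=\mathcal T^1+\mathcal T^2$ with $\mathcal T^1$ over $\{|z|\ge1\}$ and $\mathcal T^2$ over $B$, now integrating against $\mu$ and evaluating $v$ at $x+j(x,z)$. For $\mathcal T^1$, since $v(x+j(x,z),t)-v(x,t)=e^{-h(x,t)}-e^{-h(x+j(x,z),t)}\le e^{-h(x,t)}$, one gets $\mathcal T^1\le e^{-h(x,t)}\int_{|z|\ge1}\mu(dz)$. For $\mathcal T^2$, I would use the convexity-type inequality of Lemma \ref{lemma:exp_ineq1} (applied to $e^{-h}$, with the jump $j(x,z)$ in place of $z$) followed by a second-order Taylor expansion of $h$ along $j(x,z)$, which yields $\mathcal T^2[x,t,v]\le e^{-h(x,t)}\cdot\tfrac12\int_B|j(x,z)|^2\mu(dz)$ because $D^2h\equiv I$. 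Summing and invoking $(M_{LI})$ gives $\mathcal J[x,t,v]\le e^{-h(x,t)}\tilde C_\mu$, hence $\mathcal J[x_0,t_0,v]\le\tilde C_\mu$.

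Next, using $(N')$ pick $\lambda>0$ with $\lambda+F(x_0,t_0,0,I,\tilde C_\mu)>0$; by degenerate ellipticity $(E)$ and the estimate just proved, $v_t(x_0,t_0)+F(x_0,t_0,Dv,D^2v,\mathcal J[x_0,t_0,v])\ge\lambda+F(x_0,t_0,0,I,\tilde C_\mu)>0$, so $v$ is a strict supersolution at $(x_0,t_0)$; by continuity of $F$ it remains one on a ball $B((x_0,t_0),r)\subseteq Q_T$, and then $(S')$ upgrades $\varepsilon v$ to a strict supersolution on $S=B((x_0,t_0),r)\cap\{v<0\}$ for $\varepsilon<\varepsilon_0$. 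Choosing $\varepsilon_0$ small enough that $u(x,t)-M\le\varepsilon v(x,t)$ on $\partial S$ (possible since $u<M$ there while $v\ge0$ on the part of $\partial S$ where $v=0$ and $v$ is bounded below), the usual comparison argument — if the inequality failed the max of $u-M-\varepsilon v$ over $\R^N$ would be interior to $S$, contradicting that $M+\varepsilon v$ is a strict supersolution — propagates it to all of $S$. Then $(x_0,t_0)$ is a global max of $u-\varepsilon v$, and testing the subsolution inequality there contradicts the strict supersolution property, so the supposition fails and $\mathcal R_0(x_0,t_0)$ contains a point $P\ne P_0$ with $u(P)=u(P_0)$.

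The only genuinely new point relative to the general-kernel proof is checking that the variable jump size $j(x,z)$ does not spoil the two integral estimates; this is where the hypothesis $|j(x,z)|\le C_0|z|$ and assumption $(M_{LI})$ (which is phrased precisely in terms of $\int_B|j(x,z)|^2\mu(dz)$ and $\int_{\R^N\setminus B}\mu(dz)$) are used, and no singularity finer than $|z|^2$ at the origin needs to be controlled here since — unlike the horizontal case — the second-order term of $v$ is merely $I$ rather than $I-\gamma p\otimes p$. I expect this verification to be routine, so the statement follows with essentially the same proof as the general nonlocal vertical propagation theorem.
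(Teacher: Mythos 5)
Your proposal is correct and follows exactly the route the paper intends: the paper states this theorem without explicit proof, saying ``Vertical propagation of maxima holds under the same conditions,'' and your argument reproduces the general nonlocal vertical propagation proof with $u(x+z,t)$ replaced by $u(x+j(x,z),t)$ and $(M)$ replaced by $(M_{LI})$, noting correctly that the variable jump $j(x,z)$ is harmless here because the second-order auxiliary term is just $I$ rather than $I-\gamma p\otimes p$.
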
\label{vert_LI}
Strong Maximum Principle can thus be formulated for L\'evy-It\^o operators.

\begin{theorem}[Strong Maximum Principle - L\'evy It\^o]
Assume the measure $\mu$ satisfies assumption $(M_{LI})$. Let $u\in USC(\R^N\times [0,T])$ be a viscosity subsolution of (\ref{ePIDEs}) that attains a global maximum at $P_0=(x_0,t_0)\in Q_T$. If $F$ satisfies $(E)$, $(S) - (S')$, and $(N_{LI})-(N')$, then u is constant in $S(P_0)$.
\end{theorem}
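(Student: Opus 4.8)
\emph{Strategy.} The plan is to repeat, line for line, the proof of the Strong Maximum Principle in the general nonlocal setting, simply substituting at each step the L\'evy-It\^o versions of the propagation results for their general counterparts. The one preparatory ingredient is the L\'evy-It\^o analogue of the two Propositions that precede the general theorem: under $(E)$, $(N_{LI})$, $(N')$, $(S)$, $(S')$ and $(M_{LI})$, a viscosity subsolution $u$ of (\ref{ePIDEs}) attaining a global maximum $M=u(P_0)$ at $P_0=(x_0,t_0)\in Q_T$ is constant, equal to $M$, on every closed rectangle $\mathcal R(x_0,t_0)\subseteq\overline\Omega\times[0,t_0]$. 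I would obtain this exactly as in the local case, by alternating the two mechanisms already proved for L\'evy-It\^o operators: the vertical propagation theorem (hypotheses $(E),(S'),(N')$) produces, at each level $t<t_0$, a point of $\mathcal R_0$ where $u=M$; the horizontal propagation theorem (hypotheses $(E),(S),(N_{LI})$) then spreads the value $M$ over the horizontal cross-section through that point; and a standard connectedness-and-continuity sweep over the time slices of the rectangle, together with the upper semicontinuity of $u$, fills the whole rectangle. No new estimate is required here, since the analytic core --- the bound on $\mathcal{J}[x,t,v]$ for the exponential barrier $v$ --- has already been established in the proofs of those two theorems.

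\emph{The geometric step.} Suppose, for contradiction, that $u\not\equiv M$ on $S(P_0)$, and pick $Q\in S(P_0)$ with $u(Q)<M$. Join $Q$ to $P_0$ by a simple continuous curve $\gamma\colon[0,1]\to S(P_0)$, $\gamma(0)=Q$, $\gamma(1)=P_0$, along which the time coordinate is nondecreasing; this is possible by the definition of $S(P_0)$ and of $Q_T=\Omega\times(0,T]$. Since $u\circ\gamma$ is upper semicontinuous, $u\le M$, and $u(Q)<M$, the number $s_1=\min\{s\in[0,1]\colon u(\gamma(s))=M\}$ exists and is positive; set $P_1=(x^1,t^1)=\gamma(s_1)$, so that $u(\gamma(s))<M$ for all $s<s_1$. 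Because $P_1\in S(P_0)\subseteq\Omega\times(0,T]$, we may choose $a>0$ so small that the rectangle $\mathcal R(x^1,t^1)=\{(x,t)\colon |x^i-x^i_1|\le a,\ t^1-a\le t\le t^1\}$ satisfies $\mathcal R_0(x^1,t^1)\subseteq\Omega\times[0,t^1)\subseteq\Omega\times[0,t_0)$. By the rectangle statement above, $u\equiv M$ on $\mathcal R(x^1,t^1)$. On the other hand, since the time coordinate along $\gamma$ is nondecreasing, $\gamma(s)$ has time $\le t^1$ for every $s\le s_1$, and by continuity $\gamma(s)\in\mathcal R(x^1,t^1)$ for $s$ in a left neighbourhood of $s_1$; hence $u(\gamma(s))=M$ for such $s<s_1$, contradicting the definition of $s_1$. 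Therefore $u\equiv M$ on $S(P_0)$.

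\emph{Main obstacle.} The only genuinely delicate point is not analytic but geometric: one must know that inside $S(P_0)$ a connecting curve can indeed be chosen with nondecreasing time, and one must set up the ``first maximum point'' $P_1$ correctly (this is where upper semicontinuity of $u$ and compactness of $\gamma$ enter). Once the time monotonicity of $\gamma$ is available, the potential ``tangency from above'' difficulty disappears automatically, since $t(\gamma(s))\le t^1$ for $s\le s_1$. This bookkeeping is identical to the one in the fully nonlinear second-order PDE case and in the general Strong Maximum Principle above, so I would simply invoke that argument rather than reproduce it. Everything specific to L\'evy-It\^o operators has already been absorbed into the horizontal and vertical propagation theorems and into assumptions $(M_{LI})$, $(M^c_{LI})$, $(N_{LI})$; the present statement is just their synthesis through the geometric argument.
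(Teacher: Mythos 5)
Your proposal is correct and follows essentially the same route as the paper: the paper proves the general-nonlocal Strong Maximum Principle by first combining horizontal and vertical propagation into a "constancy on rectangles" proposition and then running the curve/first-maximum-point/rectangle contradiction, and it leaves the L\'evy--It\^o version as an immediate corollary of the L\'evy--It\^o horizontal and vertical propagation theorems via the identical geometric argument. Your write-up simply makes explicit, a bit more carefully than the paper's prose, the closedness of the level set $\{s:u(\gamma(s))=M\}$ via upper semicontinuity, the choice of $a$ so that the rectangle stays in $Q_T$, and why $t^1\le t_0$; no new analytic estimate is needed, exactly as you say.
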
\label{th:SMaxP_nondeg_LI}

\begin{theorem}[Strong Maximum Principle - L\'evy It\^o]
Let $u\in USC(\R^N\times [0,T])$ be a viscosity subsolution of (\ref{ePIDEs}) in $\R^N\times (0,T)$ that attains a global maximum at $(x_0,t_0)\in \R^N\times (0,T]$. Assume the measure $\mu$ satisfies assumption $(M_{LI})$ and $F$ satisfies $(E_0)$, $(S')$ and $(N')$.
Then u is constant in  $\overline{\bigcup_{n\geq 0}A_n}\times[0,t_0]$ with $\{A_n\}_n$ given by (\ref{trans_supp}).
\end{theorem}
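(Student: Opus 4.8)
The plan is to mirror exactly the two-step structure used in the previous Strong Maximum Principle theorem, namely deriving the constancy on $\overline{\bigcup_{n\geq 0}A_n}\times[0,t_0]$ from a combination of horizontal propagation of maxima (here given by Theorem \ref{th::trans_supp}, which applies for L\'evy-It\^o operators as explained in the paragraph preceding Theorem \ref{horiz_LI}) and local vertical propagation of maxima (Theorem \ref{vert_LI}). First I would recall that, since $u$ attains its global maximum at $(x_0,t_0)$, Theorem \ref{th::trans_supp} applied to the L\'evy-It\^o operator $\mathcal J$ — using the push-forward identification $\mu_x=j(x,\cdot)_*(\tilde\mu)$ and $\hbox{supp}(\mu_x)=j(x,\hbox{supp}(\tilde\mu))$ — yields that $u(\cdot,t_0)$ is constant, equal to $M:=u(x_0,t_0)$, on $\overline{\bigcup_{n\geq 0}A_n}$, with $\{A_n\}_n$ given by (\ref{trans_supp}). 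This uses only that $F$ satisfies $(E')$ (so that the viscosity inequality with the zero test function forces the nonlocal term at the maximum point to be nonnegative, hence zero since $u$ is at its max), together with assumption $(M_{LI})$ on $\mu$; note that here $(E_0)$ in the statement should be read as $(E')$, the structural assumption used in Theorem \ref{th::trans_supp}.

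Next I would propagate this horizontal slice of maxima downwards in time. Fix any point $P=(\bar x,t_0)$ with $\bar x\in\overline{\bigcup_{n\geq0}A_n}$; then $u(P)=M$, so $P$ is itself a global maximum point of $u$. Apply Theorem \ref{vert_LI} (valid under $(E)$, $(S')$, $(N')$, and $(M_{LI})$, all assumed): for any rectangle $\mathcal R(P)$ the punctured rectangle $\mathcal R_0(P)$ contains a point $P'\neq P$ with $u(P')=M$. Iterating the vertical propagation and combining with a connectedness/covering argument exactly as in the proof of the preceding Strong Maximum Principle theorem — construct, along any descending continuous path, a chain of small rectangles in which $u\equiv M$ and derive a contradiction with the existence of a first point below which $u<M$ — one concludes that $u\equiv M$ on $\{\bar x\}\times[0,t_0]$. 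Since $\bar x\in\overline{\bigcup_{n\geq0}A_n}$ was arbitrary, $u$ is constant (equal to $M$) on $\overline{\bigcup_{n\geq0}A_n}\times[0,t_0]$, which is the claim.

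A point deserving slight care is the interplay between the time variable and the set $\overline{\bigcup_{n\geq0}A_n}$: the horizontal propagation of Theorem \ref{th::trans_supp} is a statement at the single time level $t=t_0$, while the vertical propagation acts at a fixed spatial configuration and moves $t$ downward. One must check that the two can be glued, i.e. that the vertical argument may be run starting from \emph{every} point of the (possibly closed, non-open) set $\overline{\bigcup_{n\geq0}A_n}$ and not merely from $x_0$; this is exactly where one uses that every such point is genuinely a global maximum point of $u$, which in turn rests on the upper semicontinuity argument already carried out in the proof of Theorem \ref{th::trans_supp} for points in the closure. The remaining geometric bookkeeping — covering a descending arc by finitely many rectangles of shrinking size that stay inside the relevant region — is identical to the fully nonlinear second-order PDE case and to the proof of the earlier Strong Maximum Principle theorem, so no new difficulty arises there.

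The main obstacle, such as it is, is therefore not analytic but organizational: ensuring that the hypotheses listed ($(E_0)$/$(E')$, $(S')$, $(N')$, $(M_{LI})$) are precisely those needed to invoke both Theorem \ref{th::trans_supp} (in its L\'evy-It\^o form) and Theorem \ref{vert_LI} simultaneously, and that the weaker nondegeneracy and scaling conditions $(N')$, $(S')$ — which suffice for vertical propagation — are enough here because the horizontal direction is handled by the purely structural translation-of-supports mechanism rather than by the stronger $(N_{LI})$–$(S)$ pair. Once this is observed, the proof is a direct transcription of the argument given for the general-nonlocal-operator counterpart, with $\hbox{supp}(\mu_x)$ replaced by $j(x,\hbox{supp}(\tilde\mu))$ throughout.
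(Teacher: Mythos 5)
The paper does not give an explicit proof of this theorem: it is stated (like its general–nonlocal twin th:SMAXP\_supptrans) with the understanding that the same geometric chain-of-rectangles argument already written out for the preceding Strong Maximum Principle theorem (th:SMaxP\_nondeg) applies verbatim, the horizontal ingredient now being Theorem~\ref{th::trans_supp} in its L\'evy-It\^o form and the vertical ingredient being Theorem~\ref{vert_LI}. Your proposal identifies exactly these ingredients and the two-step structure (translations of supports at $t=t_0$, then descent in time via vertical propagation and a covering argument), and also correctly remarks that $(E_0)$ must be read as $(E')$; so the route is the one the paper intends.

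One place deserves a sharper word than you give it. In the covering step you say ``construct \dots\ a chain of small rectangles in which $u\equiv M$'' and later ``one concludes that $u\equiv M$ on $\{\bar x\}\times[0,t_0]$'' --- but constancy in a rectangle is \emph{not} a consequence of vertical propagation alone. Theorem~\ref{vert_LI} only produces \emph{one} other maximum point in the punctured rectangle, possibly at a different spatial location; to upgrade this to $u\equiv M$ throughout the rectangle one needs the Proposition preceding th:SMaxP\_nondeg, which couples vertical propagation with a horizontal propagation mechanism acting \emph{inside} that rectangle. In the present theorem the only available horizontal mechanism is translation of measure supports, and that one gives constancy on $\overline{\bigcup_n A_n^{(y)}}$ starting from the reached point $y$, not automatically on the full spatial cross-section of an arbitrary small rectangle around $(\bar x,t_0)$; in the paper's Proposition this is handled by the explicit hypothesis $\Omega\subset\overline{\bigcup_n A_n}$. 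Your third paragraph gestures at this gluing issue but resolves it only by noting that every point of $\overline{\bigcup_n A_n}$ is a global maximum, which is necessary but not sufficient: you still need the translated supports, started from the points produced by vertical propagation, to re-cover the spatial slice at the lower time level. Your write-up should make this appeal to the rectangle-constancy Proposition (and the conditions under which the supports re-cover the slice) explicit rather than attributing the constancy to vertical propagation alone.
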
\label{th:SMAXP_supptrans_LI}

\section{Examples}\label{sec:examples}
In this  section we discuss the validity of the Strong Maximum Principle on several representative examples.

\medskip\subsection[Horizontal Propagation - Translations of Measure Supports]
{Horizontal Propagation of Maxima by Translations of Measure Supports}
As pointed out in section \ref{sec:StrongMaxPrinc}, translations of measure supports starting at any maximum point $x_0$ lead to horizontal propagation of maxima. 
In particular, Theorem \ref{th::trans_supp} holds for nonlocal terms integrated against L\'evy measures whose supports are the whole space.

\begin{example}\label{ex_thespace}
Consider a pure nonlocal diffusion
\begin{equation}\label{ex_purenonlocal}
u_t - \mathcal{I}[x,t,u]=0 \hbox{ in } \R^N\times(0,T)
\end{equation}
where $\mathcal{I}$ is the L\'evy operator integrated against the L\'evy measure associated with the fractional Laplacian $(-\Delta)^{\beta/2}$:
$$ \mu(dz) = \frac{dz}{|z|^{N+\beta}}. $$
Then \emph{ the support of the measure is the whole space} and thus horizontal propagation of maxima holds for equation (\ref{ex_purenonlocal}) by Theorem \ref{th::trans_supp}.
\end{example}

\begin{example}
Let $N=2$ and consider equation (\ref{ex_purenonlocal}) with $\{\mu_x\}_x$ a family of \emph{L\'evy measures charging two axis} meeting at the origin
$$\mu_x(dz)=1_{\{z_1=\pm\alpha z_2\}}\nu_x(dz),$$
with $\alpha>0$ and $\hbox{supp}(\nu_x)= \R^2,$ for all $x\in\R^2$. Even though zero is not an interior point of the support, translations of measure supports starting at any point $x_0$ cover the whole space, propagating thus maxima all over $\R^2$. 

Similarly, horizontal propagation of maxima holds if \emph{measures charge cones} 
$$\mu_x(dz)=1_{\{|z_1|>\alpha|z_2|\}}\nu_x(dz),$$
with $\alpha>0$ and $\hbox{supp}(\nu_x)= \R^2$.
\end{example}

\medskip\subsection[Strong Maximum Principle - Nondegenerate Nonlocal]
{Strong Maximum Principle driven by the Nonlocal Term under Nondegeneracy Conditions.}
There are equations for which propagation of maxima does not propagate just by translating measure supports, but cases when it requires a different set of assumptions. Nondegeneracy and scaling conditions of the nonlinearity $F$ need to be satisfied in order to have a Strong Maximum Principle. But to ensure condition $(N)$, one has to assume $(M^c)$.

\begin{example}\label{ex_FLhalfspace}
Consider as before equation (\ref{ex_purenonlocal}) and let $\mu$ be the \emph{L\'evy measure} associated to the fractional Laplacian \emph{but restricted to half space}

$$
\mu(dz)= 1_{\{z_1\geq 0\}}(z)\frac{dz}{|z|^{N+\beta}}, \beta\in(1,2).
$$

\noindent where  $z=(z_1,z')\in\R\times\R^{N-1}$. Then $\R^N$ can not be covered by translations of 
the measure support and therefore one cannot conclude the function $u$ is constant on the whole domain, 
except for particular cases like the periodic case. However, $C^{0,\alpha}$ regularity results hold 
(cf. \cite{BCI:11:HdrNL}) and we expect to have Strong Maximum Principle.\smallskip

\begin{figure}[ht]
\centering\includegraphics[width=8cm]{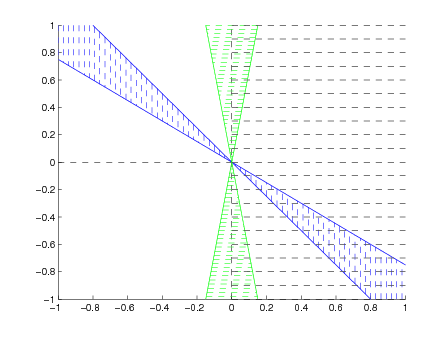}
\caption{Even if the measures are defined on half space, we can always find half cones where the integral terms are nondegenerate.}
\end{figure} 

{\rm We show that the nondegeneracy and scaling assumptions are satisfied in the case of
 Example \ref{ex_FLhalfspace}. Before proceeding to the computations, remark that 

\begin{eqnarray*}
\mathcal{C}_{\eta,\gamma}(p)& = & \{z; \space (1-\eta)|p||z|\leq|p\cdot z|\leq 1/\gamma\} \\
& = &  \{ z; \space (1-\eta)|p||\gamma z|\leq|p\cdot \gamma z|\leq 1\} = \gamma^{-1}\mathcal C_{\eta,1}(p)
\end{eqnarray*}
\begin{eqnarray*} 
\gamma\int_{\mathcal{C}_{\eta,\gamma}(p)\cap\{z_1\geq 0\}} |p\cdot z|^2 \frac{dz}{|z|^{N+\beta}} & = &
 \gamma^{-1}\int_{\mathcal{C}_{\eta,1} (p)\cap\{z_1\geq 0\}} \gamma^{\beta} |p\cdot z|^2 \frac{dz}{|z|^{N+\beta}} \\
 \ &\geq & \gamma^{\beta-1}  |p|^2(1-\eta)^2\int_{\mathcal{C}_{\eta,1}(p) \cap\{z_1\geq 0\}}|z|^2 \frac{dz}{|z|^{N+\beta}} \\
 \ &  =  & C(\eta)\gamma^{\beta-1} |p|^2 \,
\end{eqnarray*}
where $C(\eta) = (1-\eta)^2\int_{\mathcal{C}_{\eta,1}(p) \cap\{z_1\geq 0\}}|z|^2 dz/|z|^{N+\beta}$ is a positive constant. 

This further implies nondegeneracy condition (N). Indeed, there exist $R_0>0$ small enough and $0\leq\eta <1$ such that for any $0 < R < R_0$ and for all  $R/2<|p|<R$ 
\begin{eqnarray*}
-\tilde C_\mu  + c\gamma\int_{\mathcal{C}_{\eta,\gamma}(p)\cap\{z_1\geq 0\}} |p\cdot z|^2 \frac{dz}{|z|^{N+\beta}} 
	 \geq   -\tilde C_\mu + \tilde C(\eta)\gamma^{\beta-1} |p|^2 \rightarrow \infty \hbox{ as } \gamma\rightarrow \infty 
\end{eqnarray*}
as long as $\beta>1$. The rest of assumptions follow immediately.

Similar results hold for the following PIDE arising in the context of growing interfaces \cite{Woyczy:01:Levy}:
\begin{equation}\label{ex_growint}
 u_t + \frac12|Du|^2 - \mathcal{I}[x,t,u] = 0, \hbox{ in } \R^N\times(0,T)
\end{equation}
with $\mathcal{I}$ is a general nonlocal operator of form (\ref{NL_op}). 
}
\end{example}

\begin{remark}
For integro-differential equations of the type
\begin{equation}
 u_t  + b(x,t) |Du|^{m} - \mathcal I[x,t,u]=0 \hbox{ in } \R^N\times(0,T)
\end{equation}
with $b$ a continuous function and $\mu$ as in Example \ref{ex_FLhalfspace}. Strong Maximum Principle holds 
for $m\geq 1$, and for $m< 1$ if $b(\cdot)\geq0$. 
\end{remark}

\medskip\subsection{Strong Maximum Principle coming from Local Diffusion Terms}
Theorem \ref{th:SMaxP_nondeg} applies to integro-differential equations uniformly elliptic with respect to the diffusion term and linear in the nonlocal operator. 

\begin{example}
Quasilinear parabolic integro-differential equations of the form
\begin{equation}\label{ex_DwN}
 u_t - \hbox{tr}(A(x,t)D^2u) - \mathcal{I}[x,t,u] = 0 \hbox{ in } \R^N\times(0,T)
\end{equation}
with $A(x,t)$ such that
$$ a_0(x,t) I\leq A(x,t)\leq a_1(x,t) I, \space\ a_1(x,t)\geq a_0(x,t)>0$$ 
satisfy Strong Maximum Principle. 

{\rm We check the nondegeneracy and scaling conditions for this equation.
\begin{eqnarray*} 
(N)
&& -\hbox{trace}(A(x,t)(I-\gamma p\otimes p)) -\tilde C_\mu + c\gamma\int_{\mathcal{C}_\gamma} |p\cdot z|^2 \mu_x(dz) = \\
&& -\hbox{trace}(A(x,t))+\gamma\hbox{trace}(A(x,t)p\otimes p))-\tilde C_\mu + c\gamma\int_{\mathcal{C}_\gamma} |p\cdot z|^2 \mu_x(dz) \geq  \\
&& \underbrace{ - a_1(x,t) N + a_0(x,t)\gamma |p|^2 -\tilde C_\mu}_{\gg 0, \hbox{ for $\gamma$ large}} + \underbrace{c\gamma\int_{\mathcal{C}_\gamma} |p\cdot z|^2 \mu_x(dz)}_{\geq 0}.\\
(N')
&& \lambda - \hbox{trace}(A(x,t)) -\tilde C_\mu \geq \lambda  - a_1(x,t) N -\tilde C_\mu >0.
\end{eqnarray*} 
The scaling properties are immediate since the nonlinearity  is 1-homogeneous.}
\end{example}

\begin{remark}
More generally, one can consider equations of the form
\begin{equation}\label{ex_DwNG}
u_t + F(x,t,Du,D^2u) -\mathcal I[x,t,u] = 0
\end{equation}
for which the corresponding differential operator $F$ satisfies the nondegeneracy and scaling assumptions. 
The nonlocal term is driven by the second order derivatives and thus Strong Maximum Principle holds.
\end{remark}

\medskip\subsection{Strong Maximum Principle for Mixed Differential-Nonlocal terms}
We consider mixed integro-differential equations, i.e. equations for which local diffusions occur only in certain directions and nonlocal diffusions on the orthogonal ones, and show they satisfy Strong Maximum Principle. This is quite interesting, as the equations might be degenerate in both local or nonlocal terms, but the overall behavior is driven by their interaction (the two diffusions cannot cancel simultaneously). 

\begin{figure}[ht]
\centering
\includegraphics[width = 0.5\linewidth]{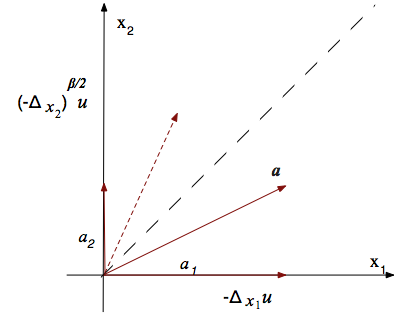}
\caption{\small Local diffusions occur only in $x_1$-directions and fractional diffusions in $x_2$-directions.}
\end{figure}

\begin{example}
Consider the following equation where local and nonlocal diffusions are mixed up 
\begin{equation}
 u_t - \mathcal{I}_{x_1}[u] -\Delta_{x_2} u = 0 \hbox{ in } \R^N\times (0,T)
\end{equation}
for $x=(x_1,x_2)\in\R^d\times\R^{N-d}$. 
The diffusion term gives the ellipticity in the direction of $x_2$, while the nonlocal term gives it in the direction of $x_1$ 
$$
\mathcal{I}_{x_1}[u] = \int_{\R^d} (u(x_1+z_1,x_2)-u(x)-D_{x_1}u(x)\cdot z_1 1_{B}(z_1))\mu_{x_1}(dz_1)
$$
where $\mu_{x_1}$ is a L\'evy measure satisfying $(M)$ with $\tilde C^1_\mu$.
The payoff for the Strong Maximum Principle to hold is assumption $(M^c)$, with $\beta>1$; 
then Theorem \ref{th:SMaxP_nondeg} applies.\smallskip

{\rm Indeed the nondegeneracy conditions $(N)$ and $(N')$ hold, because  when $\gamma$ is large enough and $\beta>1$ the following holds
\begin{eqnarray*}
(N)
&& - I_{N - d} + \gamma p_2 \otimes p_2 -\tilde C^1_\mu + c \gamma \int_{\mathcal C^1_{\eta,\gamma}(p_1)} |p_1\cdot z_1|^2 \mu_{x_1}(dz_1) \geq \\
&& - (N-d)+ \gamma |p_1|^2 - \tilde C^1_\mu + c\gamma (1-\eta)^2 |p_1|^2 \int_{\mathcal C^1_{\eta,\gamma}(p_1)}|z_1|^2 \mu_{x_1}(dz_1) \geq \\
&& - (N-d+\tilde C^1_\mu) + \gamma |p_1|^2 + \tilde C^1(\eta) \gamma^{\beta -1}|p_1|^2 \geq  - c_0 + c_1 \gamma^{\beta -1}\big(|p_1|^2+|p_2|^2\big) \\
\end{eqnarray*}
where $\tilde C^1(\eta)$, $c_0$ and $c_1$ are positive constants and 
$$\mathcal C^1_{\eta,\gamma}(p_1) = \{z_1\in \R^d; \space (1-\eta)|p_1||z_1|\leq|p_1\cdot z_1|\leq 1/\gamma\}.$$
As far as the scaling assumptions are concerned it is sufficient to see that the nonlinearity is 1-homogeneous.}
\end{example}

\begin{remark}
In general, linear integro-differential equations of the form
 \begin{equation}
 u_t - a(x)\mathcal{I}_{x_1}[u] - c(x) \Delta_{x_2} u = 0 \hbox{ in } \R^N \times (0,T)
\end{equation}
or 
\begin{equation}
 u_t -a(x)\mathcal{I}_{x_1}[u] - c(x)\mathcal{I}_{x_2}[u] = 0  \hbox{ in } \R^N \times (0,T)
\end{equation}
satisfy Strong Maximum Principle if the corresponding L\'evy measure(s) 
verify $(M)$ and $(M^c)$, with $\beta>1$ and if $a,c \geq\zeta>0$ in $\R^N$.\smallskip

{\rm Indeed, $F$ is 1-homogeneous and (N) holds:
\begin{eqnarray*}
 c(x)\big(- I_{N - d} + \gamma p_2 \otimes p_2\big) + 
 a(x)\big(-\tilde C^1_\mu + c \gamma \int_{\mathcal  C_{\eta,\gamma}(p_1)} |p_1\cdot z_1|^2 \mu_{x_1}(dz_1)\big) \geq \\
  \geq  - c_0(a(x) + c(x)) + c_1 \gamma^{\beta -1}\big(a(x)|p_1|^2 + c(x)|p_2|^2\big) 
\end{eqnarray*}
respectively
\begin{eqnarray*}
&& a(x)\big(-\tilde C^1_\mu + c \gamma \int_{\mathcal C_{\eta,\gamma}(p_1)}  |p_1\cdot z_1|^2 \mu_{x_1}(dz_1)\big) + \\
&& c(x)\big(-\tilde C^2_\mu + c \gamma \int_{\mathcal C_{\eta,\gamma}(p_2)}  |p_2\cdot z_2|^2 \mu_{x_2}(dz_2)\big) \geq \\
&& \hspace{1cm} \geq  - c_0(a(x) + c(x)) + c_1 \gamma^{\beta -1}\big(a(x)|p_1|^2 + c(x)|p_2|^2\big) .
\end{eqnarray*}
where $\mathcal C_{\eta,\gamma}(p_i) = \{z_i; \space |p_i\cdot z_i|\leq 1/\gamma\}$, for $i=1,2$.}
\end{remark}

\bigskip\section{Strong Comparison Principle}\label{sec:StrongCompPrinc}

Let $\Omega\subset\R^N$ be a bounded, \emph{connected} domain. In this section, we use Strong Maximum Principle to prove a Strong Comparison Result of viscosity sub and supersolution for integro-differential equations of the form (\ref{ePIDEs})
\begin{equation}\label{comp_eq}
 u_t + F(x,t,Du,D^2u,\mathcal{J}[x,t,u]) =0, \hbox{ in } \Omega\times (0,T)
\end{equation}
with the Dirichlet boundary condition
\begin{equation}\label{comp_bd}
 u = \varphi \hbox{ on }  \Omega^c\times [0,T]
\end{equation}
where $\varphi$ is a continuous function.  

Let $\mu$ be a L\'evy measure satisfying $(M_{LI})$.
Assume that the function $j$ appearing in the definition of $\mathcal J$ has the following property: there exists $C_0>0$ such that for all $x,y\in\Omega$ and $|z|\leq\delta$
\begin{eqnarray*}
 |j(x,z)|\leq C_0 |z| \\
 |j(x,z) - j(y,z)|\leq C_0 |z| |x-y|.
\end{eqnarray*}

We will need some additional assumptions on the equation, that we state in the following. Suppose the nonlinearity $F$ is Lipschitz continuous with respect to the variables $p$, $X$ and $l$ and for each $0<R<\infty$ there exist a function $\omega_R(r)\rightarrow 0 $, as $r\rightarrow 0$, $c_R$ a positive constant and $0\leq\lambda_R<\Lambda_R$ such that 

\begin{eqnarray*}(H)
&& F(y,s,q,Y,l_2)-F(x,t,p,X,l_1) \leq \\
&& \hspace{1cm} \omega_R(|(x,t)-(y,s)|) + c_R|p-q| + \textcolor{black}{\mathcal M_R^+}(X-Y) + c_R(l_1 - l_2),
\end{eqnarray*}
for all $x,y\in\Omega$, $t,s\in[0,T]$, $X,Y\in\mathbb S^N(\Omega)$ satisfying for some $\varepsilon>0$

\begin{equation*}
\begin{bmatrix} 
X &  0 \\
0 & -Y 
\end{bmatrix} 
\leq \frac{1}{\varepsilon}
\begin{bmatrix} 
 I & -I \\
-I &  I 
\end{bmatrix}
+
\begin{bmatrix} 
 Z & 0\\
 0 & 0 
\end{bmatrix}
, \hbox{ with } Z\in \mathbb S^N (\Omega)
\end{equation*}
and $|p|,|q|\leq R$ and $l_1,l_2\in\R$, where $\textcolor{black}{\mathcal M_R^+}$ is Pucci's \textcolor{black}{maximal} operator:
$$\textcolor{black}{\mathcal M_R^+}(X) = \Lambda_R\sum_{ \lambda_j>0} \lambda_j + \lambda_R \sum_{ \lambda_j<0} \lambda_j$$ 
with $\lambda_j$ being  the eigenvalues of $X$.  

\begin{theorem}[Strong Comparison Principle]
Assume the L\'evy measure $\mu$ satisfies assumption $(M_c)$ with $\beta>1$. Let $u\in USC(\R^N\times [0,T])$ be a viscosity subsolution and $v\in LSC(\R^N\times [0,T])$  a viscosity supersolution of (\ref{ePIDEs}), with the Dirichlet boundary condition (\ref{comp_bd}). Suppose one of the following conditions holds:
 \begin{itemize*}
  \item[(a)]  $F$ satisfies $(H)$ with $w_R$ and $c_R$ independent of $R$ or
  \item[(b)]  $u(\cdot,t), v(\cdot,t)\in Lip(\Omega)$, $\forall t\in[0,T)$ and $F$ satisfies $(H)$.
 \end{itemize*}
If $ u-v$ attains a maximum at $ P_0=(x_0,t_0) \in \Omega \times (0,T)$, then $u-v$ is constant in $C(P_0)$.
\end{theorem}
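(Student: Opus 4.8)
The plan is to reduce the statement to the Strong Maximum Principle for L\'evy--It\^o operators of Section~\ref{sec:StrongMaxPrinc_LI}, applied to the difference $w:=u-v$: I will show that $w$ is a viscosity subsolution, in $\Omega\times(0,T)$, of a suitable \emph{linearized} integro-differential equation, and then invoke the horizontal propagation of maxima for that equation at $P_0$. Observe first that $w\in USC(\R^N\times[0,T])$, since $u$ and $-v$ are both $USC$; and that $w$ attains its global maximum over $\R^N\times[0,T]$ at $P_0$, which is consistent with the weak comparison principle available under $(H)$ (it gives $w\le 0$ on $\R^N\times[0,T]$ and $w\equiv0$ on $\Omega^c\times[0,T]$).

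For the linearization, fix a radius $R$ large enough that all test gradients produced below stay in $\overline{B(0,R)}$ --- in case (b) because $u(\cdot,t),v(\cdot,t)$ are Lipschitz, and in case (a) the choice is immaterial since $\omega_R,c_R$ do not depend on $R$. The claim is that $w$ is a viscosity subsolution in $\Omega\times(0,T)$ of $w_t-c_R|Dw|-\mathcal M_R^+(D^2w)-c_R\,\mathcal J[x,t,w]=0$. Given $\phi\in C^2(\R^N\times[0,T])$ with $w-\phi$ attaining a global maximum at $(\hat x,\hat t)\in\Omega\times(0,T)$, I would double the variables and maximise over $\R^N\times\R^N\times[0,T]^2$ the function
$$
u(x,t)-v(y,s)-\phi\Big(\tfrac{x+y}{2},\tfrac{t+s}{2}\Big)-\tfrac1{2\eps}\big(|x-y|^2+|t-s|^2\big)-\beta_\eps(x,y,t,s),
$$
where $\beta_\eps$ is a small localising term forcing the maximisers $(x_\eps,y_\eps,t_\eps,s_\eps)\to(\hat x,\hat x,\hat t,\hat t)$ with $\eps^{-1}(|x_\eps-y_\eps|^2+|t_\eps-s_\eps|^2)\to0$. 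The nonlocal version of Jensen--Ishii's Lemma (Barles--Imbert \cite{BI:08:ViscNL}) then produces a common time slope and symmetric matrices $X,Y$ obeying exactly the matrix inequality encoded in $(H)$ (with $Z$ coming from $D^2\phi$), together with the correct nonlocal terms for $u$ at $(x_\eps,t_\eps)$ and for $v$ at $(y_\eps,s_\eps)$. Subtracting the sub- and supersolution inequalities and using $(H)$ with that $R$ gives, after rearranging (all quantities evaluated at the maximisers),
$$
\phi_t-c_R|D\phi|-\mathcal M_R^+(X-Y)-c_R\big(\mathcal J_u-\mathcal J_v\big)\le \omega_R\big(|(x_\eps,t_\eps)-(y_\eps,s_\eps)|\big)+o(1).
$$
Letting $\eps\to0$: the right side tends to $0$; $\phi_t,D\phi$ get evaluated at $(\hat x,\hat t)$; the matrix inequality yields $X-Y\le D^2\phi(\hat x,\hat t)+o(1)$, hence $\mathcal M_R^+(X-Y)\le\mathcal M_R^+(D^2\phi(\hat x,\hat t))+o(1)$ by monotonicity of Pucci's operator; and $\mathcal J_u-\mathcal J_v$ converges to the nonlocal term of $w$ tested against $\phi$ at $(\hat x,\hat t)$, using that $\mathcal J$ is linear in the function argument together with $|j(x,z)-j(y,z)|\le C_0|z||x-y|$ to control the discrepancy coming from $x_\eps\ne y_\eps$. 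This is precisely the subsolution inequality for the linear equation above.

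It then remains to verify that the linearized nonlinearity $G(x,t,p,X,l):=-c_R|p|-\mathcal M_R^+(X)-c_Rl$ meets the hypotheses of the Strong Maximum Principle for L\'evy--It\^o operators. It is continuous and degenerate elliptic, so $(E)$ holds (and $(E')$ too, since $G(x,t,0,O,l)\le0$ forces $l\ge0$); it is positively $1$-homogeneous in $(p,X,l)$, so $(S)$ and $(S')$ hold with equality; and $(N')$ holds because $\lambda+G(x_0,t_0,0,I,\tilde C_\mu)=\lambda-\Lambda_RN-c_R\tilde C_\mu>0$ for $\lambda$ large. For $(N_{LI})$: the eigenvalues of $I-\gamma p\otimes p$ are $1$ (multiplicity $N-1$) and $1-\gamma|p|^2$, so $-\mathcal M_R^+(I-\gamma p\otimes p)\ge-\Lambda_R(N-1)-\lambda_R$ for $\gamma$ large, while for $z\in\mathcal C_{\eta,\gamma}(p)$ and $R/2\le|p|\le R$ one has $|p\cdot j(x,z)|^2\ge(1-\eta)^2(R/2)^2|j(x,z)|^2$, so $(M^c_{LI})$ with $\beta>1$ gives $c\gamma\int_{\mathcal C_{\eta,\gamma}(p)}|p\cdot j(x,z)|^2\mu(dz)\ge c(1-\eta)^2(R/2)^2C_\mu(\eta)\,\gamma^{\beta-1}\to+\infty$; therefore $G\big(x,t,p,I-\gamma p\otimes p,\tilde C_\mu-c\gamma\int_{\mathcal C_{\eta,\gamma}(p)}|p\cdot j(x,z)|^2\mu(dz)\big)\to+\infty$ as $\gamma\to+\infty$, uniformly in the prescribed ranges. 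Applying the horizontal propagation of maxima for L\'evy--It\^o operators (Theorem~\ref{horiz_LI}) to $w$ then shows $w$ is constant on $C(P_0)$, as claimed.

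The technical heart is the linearization step: one must invoke the nonlocal Jensen--Ishii lemma so that $X,Y$ obey precisely the inequality built into $(H)$, and then pass to the limit $\eps\to0$ in the nonlocal terms --- this is where the Lipschitz dependence of $z\mapsto j(x,z)$ on $x$ is essential, and where the dichotomy (a)/(b) enters, keeping the test gradients $p_\eps,q_\eps$ in a fixed ball so that the $R$-dependent structure constants of $(H)$ stay under control.
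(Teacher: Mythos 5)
Your proposal is correct and follows essentially the same strategy as the paper: linearize by doubling variables (with the nonlocal Jensen–Ishii lemma and hypothesis $(H)$) to show $w=u-v$ is a viscosity subsolution of the Pucci-type linearized equation $w_t - c_R|Dw| - \mathcal M_R^+(D^2w) - c_R\,\mathcal J[x,t,w]=0$, then verify $(E)$, $(S)$, $(S')$, $(N_{LI})$, $(N')$ for the linearized nonlinearity and invoke the horizontal propagation of maxima. The minor variations (symmetric placement $\phi(\tfrac{x+y}{2},\tfrac{t+s}{2})$ instead of $\phi(x,t)$, and a single parameter $\varepsilon$ rather than separate $\varepsilon,\eta$) are stylistic; the paper additionally spells out the $\delta<\delta'$ splitting of the nonlocal terms in the passage to the limit, a step you summarize but correctly identify as the technical heart.
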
 

\begin{proof}
The proof relies on finding the equation for which $w = u-v \in USC(\R^N\times [0,T])$ 
is a viscosity subsolution and applying strong maximum principle results for the latter. 
However, the conclusion is not immediate as linearizion does not go hand in hand with the 
viscosity solution theory approach and difficulties imposed by the behavior of the measure near the singularity might appear.\smallskip

1. Let $w = u-v$ and consider $\phi$ a smooth test-function such that $w-\phi$ has a strict global maximum at $(x_0,t_0)$. We penalize the test function around the maximum point, by doubling the variables, i.e. we consider the auxiliary function 
$$\Psi_{\varepsilon,\eta}(x,y,t,s) = u(x,t)-v(y,s)- \frac{|x-y|^2}{\varepsilon^2} - \frac{(t-s)^2}{\eta^2} - \phi(x,t).$$
Then there exist a sequence of global maximum points $(x_\varepsilon,y_\varepsilon,t_\eta,s_\eta)$ of function $\Psi_{\varepsilon,\eta}$ with the properties
\begin{eqnarray*}
&& (x_\varepsilon,t_\eta),(y_\varepsilon,s_\eta)\rightarrow (x_0,t_0) \hbox{ as }\eta,\varepsilon\rightarrow 0\\
&& \frac{|x_\varepsilon-y_\varepsilon|^2}{\varepsilon^2}\rightarrow \varepsilon \hbox{ as } \varepsilon\rightarrow 0\\
&& \frac{(t_\eta-s_\eta)^2}{\eta^2}\rightarrow 0 \hbox{ as } \eta\rightarrow 0
\end{eqnarray*}
and the test-function $\varphi$ being continuous
\begin{equation}\label{conv_max}
\lim_{\eta,\varepsilon \rightarrow 0}(u(x_\varepsilon,t_\eta) -v(y_\varepsilon,s_\eta))=u(x_0,t_0) - v(x_0,t_0).
\end{equation}
In addition, there exist $X_\varepsilon,Y_\varepsilon\in\mathbb S^N$ such that 
\begin{eqnarray*}
 (a_\eta + \phi_t(x_\varepsilon,t_\eta), p_\varepsilon + D\phi(x_\varepsilon,t_\eta), X_\varepsilon + D^2\phi(x_\varepsilon,t_\eta)) 
 \in \overline{\mathcal D}^{2,+} u(x_\varepsilon,t_\eta)\\
 (a_\eta,p_\varepsilon,Y_\varepsilon) \in \overline{\mathcal D}^{2,-}v(y_\varepsilon,s_\eta)
\end{eqnarray*}
\begin{equation*}
\begin{bmatrix} 
X_\varepsilon + D\phi(x_\varepsilon,t_\eta)&  0 \\
0 & -Y_\varepsilon 
\end{bmatrix} 
\leq \frac{4}{\varepsilon^2}
\begin{bmatrix} 
 I & -I \\
-I &  I 
\end{bmatrix}
+
\begin{bmatrix} 
D\phi(x_\varepsilon,t_\eta)&  0 \\
0 & 0
\end{bmatrix}
\end{equation*}
and $p_\varepsilon$, $ a_\eta$ are defined by
\begin{equation*}
p_\varepsilon:= 2\frac{x_\varepsilon-y_\varepsilon}{\varepsilon^2} \hbox{ and }
a_\eta: = 2\frac{t_\eta-s_\eta}{\eta^2}.
\end{equation*}
Consider the test function 
$$\phi^1_{\varepsilon,\eta}(x,t) = v(y_\varepsilon,s_\eta) + \frac{|x-y_\varepsilon|^2}{\varepsilon^2} + \frac{(t-s_\eta)^2}{\eta^2} + \phi(x,t).$$
Then $u-\phi^1_{\varepsilon,\eta}$ has a global maximum at $(x_\varepsilon,t_\eta)$. But $u$ is a subsolution of (\ref{ePIDEs}) and thus for $\delta>0$ the following holds
\begin{eqnarray*}
&& \phi_t(x_\varepsilon,t_\eta) + a_\eta
      + F\big(x_\varepsilon,t_\eta, D\phi(x_\varepsilon,t_\eta) + p_\varepsilon, D^2\phi(x_\varepsilon,t_\eta) + X_\varepsilon, ...\\
&& \hspace{2cm}  ..., 
		       \mathcal J^1_\delta[x_\varepsilon,t_\eta,\phi + \frac{|x - y_\varepsilon|^2}{\varepsilon^2}] 
		       + \mathcal J^2_\delta[x_\varepsilon,t_\eta, D\phi(x_\varepsilon,t_\eta)+ p_\varepsilon,u]\big)\leq 0.
\end{eqnarray*}
Similarly, consider the test function 
$$\phi^2_{\varepsilon,\eta}(y,s) = u(x_\varepsilon,t_\eta) - \frac{|x_\varepsilon-y|^2}{\varepsilon^2} - \frac{(t_\eta-s)^2}{\eta^2} - \phi(x_\varepsilon,t_\eta).$$
Then $v-\phi^2_{\varepsilon,\eta}$ has a global minimum at $(y_\varepsilon,s_\eta)$. But $v$ is a supersolution of (\ref{ePIDEs}) and thus:
\begin{eqnarray*}
&& a_\eta + F\big (y_\varepsilon,s_\eta, p_\varepsilon, 
		       Y_\varepsilon, \mathcal J^1_\delta[y_\varepsilon,s_\eta,\frac{|x_\varepsilon-y|^2}{\varepsilon^2}]
		       +\mathcal J^2_\delta [y_\varepsilon,s_\eta, p_\varepsilon,v]\big)\geq 0.
\end{eqnarray*}
Subtracting the two inequalities and taking into account $(H)$ we get that for all $\delta >0$ 
\begin{eqnarray*}
 \phi_t(x_\varepsilon,t_\eta) 
 & - & \omega(|(x_\varepsilon,t_\eta) - (y_\varepsilon,s_\eta)|)  
	- c|D\phi(x_\varepsilon,t_\eta)| - \textcolor{black}{\mathcal M^+}(D^2\phi(x_\varepsilon,t_\eta) + X_\varepsilon - Y_\varepsilon) \\ 
 & - & c(\mathcal J^1_\delta[x_\varepsilon,t_\eta,\phi + \frac{|x - y_\varepsilon|^2}{\varepsilon^2}] 
		       + \mathcal J^2_\delta[x_\varepsilon,t_\eta,D\phi(x_\varepsilon,t_\eta)+ p_\varepsilon,u] )\\
 & - & c(\mathcal J^1_\delta[y_\varepsilon,s_\eta,-\frac{|x_\varepsilon-y|^2}{\varepsilon^2}]
		       -\mathcal J^2_\delta [y_\varepsilon,s_\eta, p_\varepsilon,v]) \leq0.
\end{eqnarray*}
Taking into account the matrix inequality and the sublinearity of Pucci's operator, we deduce that 
$$
\textcolor{black}{\mathcal M^+}(D^2\phi(x_\varepsilon,t_\eta) + X_\varepsilon - Y_\varepsilon) \leq \textcolor{black}{\mathcal M^+}(D^2\phi(x_\varepsilon,t_\eta) ).
$$
On the other hand, we seek to estimate the integral terms. For this purpose denote 
\begin{eqnarray*}
&& l_u(z):= u(x_\varepsilon+j(x_\varepsilon,z),t_\eta) - u(x_\varepsilon,t_\eta) - (p_\varepsilon+D\phi(x_\varepsilon,t_\eta))\cdot j(x_\varepsilon,z) \\
&& l_v(z):= v(y_\varepsilon+j(y_\varepsilon,z),s_\eta) - v(y_\varepsilon,s_\eta) - p_\varepsilon \cdot j(y_\varepsilon,z) \\
&& l_\phi(z):= \phi(x_\varepsilon+j(x_\varepsilon,z),t_\eta) - \phi(x_\varepsilon,t_\eta) - D\phi(x_\varepsilon,t_\eta) \cdot j(x_\varepsilon,z).
\end{eqnarray*}
Fix $\delta' \gg \delta$ and split the integrals into:
\begin{eqnarray*}
\mathcal J^2_\delta[x_\varepsilon,t_\eta,p_\varepsilon+D\phi(x_\varepsilon,t_\eta), u] 
&=& \mathcal J^2_{\delta'}[x_\varepsilon,t_\eta,p_\varepsilon+D\phi(x_\varepsilon,t_\eta),u] +
\int_{\delta<|z|<\delta'} l_u(z)\mu(dz)\\
\mathcal J^2_\delta[y_\varepsilon,s_\eta,p_\varepsilon ,v] 
&=& \mathcal J^2_{\delta'}[y_\varepsilon,s_\eta,p_\varepsilon ,v] + \int_{\delta<|z|<\delta'}l_v(z)\mu(dz).
\end{eqnarray*}
Since $(x_\varepsilon,y_\varepsilon,t_\eta,s_\eta)$ is a maximum of $\Psi_{\varepsilon,\eta}$ we have
\begin{eqnarray*}
u(x_\varepsilon+j(x_\varepsilon,z),t_\eta)-v(y_\varepsilon+ j(y_\varepsilon,z),s_\eta)
- \frac{|x_\varepsilon+j(x_\varepsilon,z)-y_\varepsilon- j(y_\varepsilon,z)|^2}{\varepsilon^2} -\\
- \phi(x_\varepsilon+j(x_\varepsilon,z),t_\eta) \leq u(x_\varepsilon,t_\eta)-v(y_\varepsilon,s_\eta)
- \frac{|x_\varepsilon-y_\varepsilon|^2}{\varepsilon^2} - \phi(x_\varepsilon,t_\eta)
\end{eqnarray*}
from where we get
\begin{eqnarray*}
l_u(z) - l_v(z) & \leq & l_\phi(z)+\frac{|j(x_\varepsilon,z)-j(y_\varepsilon,z)|^2}{\varepsilon^2} \\
\ & \leq  & l_\phi(z)+C_0^2 \frac{|x_\varepsilon - y_\varepsilon|^2}{\varepsilon^2}|z|^2 .
\end{eqnarray*}
This leads us to 
$$
\int_{\delta<|z|<\delta'} l_u(z)\mu(dz) - \int_{\delta<|z|<\delta'} l_v(z)\mu(dz) 
\leq \int_{\delta<|z|<\delta'} l_\phi(z)\mu(dz) + O(\frac{|x_\varepsilon - y_\varepsilon|^2}{\varepsilon^2}).
$$
Letting first $\delta$ go to zero, we get 
\begin{eqnarray*}
&& \limsup_{\delta\rightarrow 0}\big(\mathcal J^2_\delta   [x_\varepsilon,t_\eta,p_\varepsilon+D\phi(x_\varepsilon,t_\eta),u] 
 - \mathcal J^2_\delta[y_\varepsilon,s_\eta,p_\varepsilon, v] \big)\leq \\ 
&& \hspace{2cm}\leq\mathcal J^2_{\delta'}[x_\varepsilon,t_\eta,p_\varepsilon+D\phi(x_\varepsilon,t_\eta),u] 
- \mathcal J^2_{\delta'}[y_\varepsilon,s_\eta,p_\varepsilon,v] \\
& & \hspace{2cm}
 + \mathcal J^1_{\delta'} [x_\varepsilon,t_\eta,\phi] + O(\frac{|x_\varepsilon - y_\varepsilon|^2}{\varepsilon^2})
\end{eqnarray*}
whereas close to the origin
\begin{eqnarray*}
\mathcal J^1_\delta[x_\varepsilon,t_\eta,\frac{|x - y_\varepsilon|^2}{\varepsilon^2}] 
- \mathcal J^1_\delta[y_\varepsilon,s_\eta,-\frac{|x_\varepsilon-y|^2}{\varepsilon^2}] 
= \frac{2}{\varepsilon^2} \int_{|z|\leq \delta} |j(x_\varepsilon,z)|^2\mu(dz)\rightarrow 0 \\
\mathcal J^1_\delta[x_\varepsilon,t_\eta,\phi] \leq  \int_{|z|\leq \delta} \big(\sup_{|\theta|<1} D^2\phi(x_\varepsilon + \theta j(x_\varepsilon,z),t_\eta)j(x_\varepsilon,z)\cdot j(x_\varepsilon,z)\big) \mu(dz)\rightarrow 0.
\end{eqnarray*}
Furthermore, employing (\ref{conv_max}) and the regularity of the test function $\phi$, as well as the upper semicontinuity of $u-v$ and the continuity of the jump function $j$, we have
\begin{eqnarray*}
&& \limsup_{\eta,\varepsilon\rightarrow0} \big(\mathcal J^2_{\delta'}[x_\varepsilon,t_\eta,p_\varepsilon+D\phi(x_\varepsilon,t_\eta),u] 
 - \mathcal J^2_{\delta'}[y_\varepsilon,s_\eta,p_\varepsilon,v] \big) \\
&&  \hspace{1cm}\leq 
   \int_{|z|\geq \delta'} \limsup_{\eta,\varepsilon\rightarrow0} \big((u(x_\varepsilon+j(x_\varepsilon, z),t_\eta) - v(y_\epsilon+j(y_\varepsilon,z),s_\eta))\\
&& \hspace{3cm}
- (u(x_\varepsilon,t_\eta) - v(y_\varepsilon,s_\eta))\\
&& \hspace{3cm}   
   -( D\phi(x_\varepsilon,t_\eta)\cdot j(x_\varepsilon,z) + p_\varepsilon \cdot (j(x_\varepsilon,z)-j(y_\varepsilon,z))) 1_B(z)\big)\mu(dz) \\
&& \hspace{1cm}   
   \leq \int_{|z|\geq \delta'}\big(\limsup_{\eta,\varepsilon\rightarrow0} (u(x_\varepsilon+j(x_\varepsilon, z),t_\eta) - v(y_\varepsilon+j(y_\varepsilon,z),s_\eta))\\
&& \hspace{3cm}     
   -\lim_{\eta,\varepsilon\rightarrow0}(u(x_\varepsilon,t_\eta) - v(y_\varepsilon,s_\eta))\\
&& \hspace{3cm}        
   -\lim_{\eta,\varepsilon\rightarrow0} D\phi(x_\varepsilon,t_\eta) \cdot j(x_\varepsilon,z) 1_B(z)\big)\mu(dz) \\
&&  \hspace{1cm}\leq 
   \int_{|z|\geq \delta'} \big((u(x_0+j(x_0,z),t_\eta) - v(x_0+j(x_0,z),t_0)) \\
&& \hspace{3cm} 
    - (u(x_0,t_0) - v(x_0,t_0))\\
&& \hspace{3cm} 
   - D\phi(x_0,t_0)\cdot j(x_0,z) \big)\mu(dz) = \mathcal J^2_{\delta'}[x_0,t_0,D\varphi(x_0,t_0), w].
\end{eqnarray*}
Passing to the limits in the viscosity inequality we get, for all $\delta'>0$ that 
\begin{eqnarray*}
&&\phi_t(x_0,t_0) - c|D\phi(x_0,t_0)| - \textcolor{black}{\mathcal M^+}(D^2\phi(x_0,t_0)) -\\
&&\hspace{1cm}c (\mathcal J^1_{\delta'} [x_0,t_0,\phi]+ \mathcal J^2_{\delta'}[x_0,t_0,D\varphi(x_0,t_0), w])\leq0. 
\end{eqnarray*}
Hence, $w$ is a viscosity subsolution of the equation
$$
w_t - c|Dw| - \textcolor{black}{\mathcal M^+}(D^2w) -c \mathcal J[x,t,w] = 0 \hbox { in } \Omega\times (0,T).
$$
In case the sub and super-solutions are Lipschitz we take $R^* = \max\{||Du||_\infty,||Dv||_\infty\}$ 
and denote by $c = c_{R^*}$ and $w = w_{R^*}$. \smallskip
 
2. The equation satisfies the strong maximum principle since the nonlinearity is positively 1-homogeneous and the nondegeneracy conditions $(N)$ and $(N')$ are satisfied.
\begin{eqnarray*} (N)
 && -c |p| - \textcolor{black}{\mathcal M^+}(I-\gamma p \otimes p) -c\tilde C_\mu + c\gamma\int_{\mathcal{C}_{\eta,\gamma}} | p\cdot j(x,z)|^2 \mu(dz) \geq \\
 && -c |p| - \textcolor{black}{\mathcal M^+(I)+\gamma \mathcal M^-(p \otimes p)} -c\tilde C_\mu + c\gamma\int_{\mathcal{C}_{\eta,\gamma}} | p\cdot j(x,z)|^2 \mu(dz) \geq \\
 && -c |p| - \textcolor{black}{\Lambda N + \lambda \gamma |p|^2} - c\tilde C_\mu + C(\eta) \gamma^{\beta-1}|p|^2 >0, \hbox{ for } \gamma \hbox{ large }.\\
\end{eqnarray*}
Therefore, SMaxP applies and we conclude that if $u- v$ attains a maximum inside the domain $\Omega\times (0,T)$ at some point $(x_0,t_0)$ then $u-v$ is constant in $\Omega\times[0,t_0]$. 
\end{proof} 
\medskip

\begin{remark}
 If Pucci's operator $\textcolor{black}{\mathcal M^+}$ appearing in hypothesis $(H)$ is nondegenerate, 
i.e. $\lambda_R >0$, then one can consider any L\'evy measure $\mu$, not necessarily satisfying $(M_c)$.  
\end{remark}

\begin{example}
 The linear PIDE
 $$
 u_t - a(x)\Delta u - \mathcal I[x,t,u] = f(x) \hbox{ in } \Omega
 $$
 with $a(x)\geq0$, satisfies Strong Comparison, as (H) holds for the corresponding nonlinearity.  
 \end{example}
 
 \begin{example}
  On the other hand, for the equation 
 $$
 u_t + |Du|^m -\mathcal I[u] = f(x) \hbox{ in } \Omega
 $$
with $m\geq 2$ condition $(H)$ holds if the sub and super-solutions are Lipschitz continuous in space.
\smallskip  
 
 \emph{Indeed, for $u$ subsolution and $v$ supersolution
 \begin{eqnarray*}
&&(u-v)_t + |Du|^m - |Dv|^m - \mathcal I[u-v]   \\
&&\hspace{1cm}\geq(u-v)_t + m |Dv|^{m-2} (Du-Dv) - \mathcal I[u-v]\\
&& \hspace{1cm} \geq (u-v)_t -c D(u-v) - \mathcal I[u-v].
 \end{eqnarray*}}
\end{example}

\bigskip\section{Appendix}

We present in the following some useful properties of the nonlocal terms. For a given function $v$ defined on $\R^N\times[0,T]$, consider the integral operators 
$$
\mathcal{I}[x,t,v]=\int_{\mathcal D} (v(x+z,t)-v(x,t) - Dv(x,t)\cdot z 1_B(z))\mu_x(dz),
$$
and 
$$
\mathcal{J}[x,t,v]=\int_{\mathcal D} (v(x+j(x,z),t)-v(x,t) - Dv(x,t)\cdot j(x,z)1_B(z))\mu(dz),
$$
where the integral is taken over a domain $\mathcal D\subseteq \R^N$.

\begin{lemma}\label{lemma:exp_ineq1}
Any smooth function $$v(x,t)=e^{\varphi(x,t)}$$ satisfies the integral inequality
\begin{equation*}
 \mathcal{I}[x,t,v]\geq v\cdot \mathcal{I}[x,t,\varphi], \forall (x,t)\in\R^N\times[0,T]
\end{equation*}
\end{lemma}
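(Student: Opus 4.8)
The plan is to obtain the inequality first pointwise in the jump variable $z$, using nothing more than the convexity of the exponential, and then to integrate against $\mu_x$. The elementary fact we rely on is that for all real numbers $a,b$ one has $e^{a}-e^{b}\geq e^{b}(a-b)$, i.e. the graph of $t\mapsto e^{t}$ lies above its tangent line at $b$ (equivalently $e^{a-b}\geq 1+(a-b)$).

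First I would apply this with $a=\varphi(x+z,t)$ and $b=\varphi(x,t)$ to get, for every $z$,
$$
v(x+z,t)-v(x,t)=e^{\varphi(x+z,t)}-e^{\varphi(x,t)}\;\geq\; v(x,t)\,\bigl(\varphi(x+z,t)-\varphi(x,t)\bigr).
$$
Next I would note that $v=e^{\varphi}$ gives $Dv(x,t)=v(x,t)\,D\varphi(x,t)$, hence $Dv(x,t)\cdot z\,1_B(z)=v(x,t)\,D\varphi(x,t)\cdot z\,1_B(z)$. Subtracting this identity from the previous inequality shows that the integrand defining $\mathcal{I}[x,t,v]$ dominates $v(x,t)$ times the integrand defining $\mathcal{I}[x,t,\varphi]$:
$$
v(x+z,t)-v(x,t)-Dv(x,t)\cdot z\,1_B(z)\;\geq\; v(x,t)\Bigl(\varphi(x+z,t)-\varphi(x,t)-D\varphi(x,t)\cdot z\,1_B(z)\Bigr)
$$
for all $z\in\mathcal{D}$.

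Finally I would integrate this pointwise inequality against $\mu_x$. Since $v(x,t)=e^{\varphi(x,t)}>0$ is a positive constant with respect to $z$, monotonicity of the integral yields $\mathcal{I}[x,t,v]\geq v(x,t)\,\mathcal{I}[x,t,\varphi]$, which is the claim. The only point requiring a word of care — and the closest thing to an obstacle — is the well-definedness of the two integrals near the singularity of $\mu_x$; this is handled exactly as elsewhere in the paper, by keeping in mind the splitting $\mathcal{I}=\mathcal{I}^{1}_{\delta}+\mathcal{I}^{2}_{\delta}$, since the smoothness of $v$ and $\varphi$ provides the second-order Taylor bound $O(|z|^{2})$ on both integrands near the origin (integrable against a Lévy measure), and the pointwise inequality above passes to each piece separately.
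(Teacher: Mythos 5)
Your proof is correct and uses essentially the same argument as the paper: both rely on the convexity inequality $e^{y}\geq 1+y$ (you write it as the tangent-line inequality $e^{a}-e^{b}\geq e^{b}(a-b)$), combined with the identity $Dv=vD\varphi$, and then integrate against $\mu_x$. The only cosmetic difference is that you make the pointwise integrand comparison explicit before integrating, whereas the paper factors out $e^{\varphi(x,t)}$ inside the integral and applies the inequality directly.
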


\begin{proof}
The inequality is immediate from $e^y - 1\geq y$, $\forall y\in\R$. More precisely
\begin{eqnarray*}
\mathcal{I}[x,t,v]&  =  & \int_{\mathcal D} \big(e^{\varphi(x+z,t)}-e^{\varphi(x,t)} -e^{\varphi(x,t)}D\varphi(x,t)\cdot z 1_B(z)\big)\mu_x(dz)\\
       \ &  =  & e^{\varphi(x,t)}\int_{\mathcal D} \big(e^{\varphi(x+z,t) - \varphi(x,t)}-1 -D\varphi(x,t)\cdot z 1_B(z)\big)\mu_x(dz)\\
       \ & \geq & e^{\varphi(x,t)}\int_{\mathcal D} \big(\varphi(x+z,t) - \varphi(x,t)-D\varphi(x,t)\cdot z 1_B(z)\big) \mu_x(dz).
\end{eqnarray*}
\end{proof}

\noindent We straighten the convex inequality to the following:
\begin{lemma}\label{lemma:exp_ineq2}
Let $v$ be a smooth function of the form  $$v(x,t)=e^{\varphi(x,t)}.$$ Then for any $\delta\geq 0$ there exists a constant $c=\frac12 e^{-\delta}$ such that $v$ satisfies
\begin{equation*}
 \mathcal{I}[x,t,v]\geq e^{\varphi(x,t)}\cdot [\mathcal{I}[x,t,\varphi]+ c\int_{\mathcal D}(\varphi(x+z,t)-\varphi(x,t))^2\mu_x (dz)], 
\end{equation*}
for all $(x,t)\in\R^N\times[0,T]$, where the integral is taken over the domain
$\mathcal D = \{\varphi(x+z)-\varphi(x)\geq -\delta\}.$
\end{lemma}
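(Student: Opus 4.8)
The plan is to reproduce the argument of Lemma \ref{lemma:exp_ineq1}, but with the crude convex estimate $e^y-1\geq y$ replaced by a sharper one that keeps a quadratic remainder, valid on the half-line $y\geq-\delta$. The whole proof reduces to one scalar inequality plus the same factorization of $e^{\varphi(x,t)}$ out of the nonlocal term.

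First I would establish the elementary inequality: for every $\delta\geq0$ and every real $y\geq-\delta$,
$$
e^{y}-1\geq y+\tfrac12 e^{-\delta}y^{2}.
$$
This follows by considering $g(y)=e^{y}-1-y-\tfrac12 e^{-\delta}y^{2}$, which satisfies $g(0)=g'(0)=0$ and $g''(y)=e^{y}-e^{-\delta}\geq0$ precisely for $y\geq-\delta$; hence $g'$ is nondecreasing on $[-\delta,\infty)$ with a zero at $0$, so $g$ attains its minimum on $[-\delta,\infty)$ at $y=0$ and is therefore nonnegative there. (For $\delta=0$ this is the familiar bound $e^{y}\geq1+y+\tfrac12 y^{2}$ for $y\geq0$.)

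Next, exactly as in Lemma \ref{lemma:exp_ineq1}, I would factor $e^{\varphi(x,t)}$ out of the integral: since $Dv(x,t)=e^{\varphi(x,t)}D\varphi(x,t)$,
$$
\mathcal{I}[x,t,v]=e^{\varphi(x,t)}\int_{\mathcal D}\big(e^{\varphi(x+z,t)-\varphi(x,t)}-1-D\varphi(x,t)\cdot z\,1_B(z)\big)\mu_x(dz).
$$
On the domain $\mathcal D=\{z:\varphi(x+z,t)-\varphi(x,t)\geq-\delta\}$ the quantity $y=\varphi(x+z,t)-\varphi(x,t)$ satisfies $y\geq-\delta$, so the scalar inequality above applies pointwise and yields
$$
e^{\varphi(x+z,t)-\varphi(x,t)}-1-D\varphi(x,t)\cdot z\,1_B(z)\geq\big(\varphi(x+z,t)-\varphi(x,t)-D\varphi(x,t)\cdot z\,1_B(z)\big)+\tfrac12 e^{-\delta}\big(\varphi(x+z,t)-\varphi(x,t)\big)^{2}.
$$
Integrating this against $\mu_x$ over $\mathcal D$ and recalling the definitions of $\mathcal{I}[x,t,\varphi]$ gives the claim with $c=\tfrac12 e^{-\delta}$.

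I do not expect a genuine obstacle: the only non-bookkeeping ingredient is the scalar estimate, and the remaining manipulations are identical to those in Lemma \ref{lemma:exp_ineq1}; as there, integrability of all terms is taken for granted for the smooth functions under consideration. The one point worth flagging in the write-up is that restricting the domain of integration to $\mathcal D$ is exactly what legitimizes the quadratic gain — on the complementary set $\{\varphi(x+z,t)-\varphi(x,t)<-\delta\}$ the inequality $e^{y}-1\geq y+\tfrac12 e^{-\delta}y^{2}$ need not hold, so the bound is genuinely local to $\mathcal D$.
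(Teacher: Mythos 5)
Your proof is correct and follows essentially the same route as the paper: factor out $e^{\varphi(x,t)}$ and apply pointwise the scalar inequality $e^{y}-1\geq y+\tfrac12 e^{-\delta}y^{2}$ for $y\geq-\delta$ with $y=\varphi(x+z,t)-\varphi(x,t)$. The only difference is that you supply a short verification of that scalar inequality (via $g''(y)=e^{y}-e^{-\delta}\geq0$ on $[-\delta,\infty)$), which the paper simply asserts.
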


\begin{proof} The proof is direct application of the exponential inequality
$$ e^y - 1\geq y+ c y^2, \forall y\geq -\delta .$$
We now insert the previous inequality  with $y=\varphi(x+z,t) - \varphi(x,t)$ in the nonlocal term and obtain 
\begin{eqnarray*}
\mathcal{I}[x,t,e^{\varphi}]&  =  & e^{\varphi(x,t)}\int_{\mathcal D} \big(e^{\varphi(x+z,t) - \varphi(x,t)}-1 -D\varphi(x,t)\cdot z  1_B(z)\big)\mu_x(dz)\\
                    \ & \geq & e^{\varphi(x,t)} [\int_{\mathcal D} \big(\varphi(x+z,t) - \varphi(x,t)-D\varphi(x,t)\cdot z  1_B(z)\big)\mu_x(dz)\\
 	              \ & \    & \hspace{0.7cm}  + c\int_{\mathcal D} \big((\varphi(x+z,t) - \varphi(x,t)\big)^2\mu_x(dz)].
\end{eqnarray*}
\end{proof}
    
Similar results hold for L\'evy-It\^o operators.
\begin{lemma}\label{lemma:exp_ineq1_LI}
The function $v(x,t)=e^{\varphi(x,t)}$, satisfies the integral inequality
\begin{equation*}
 \mathcal{J}[x,t,v]\geq v\cdot \mathcal{J}[x,t,\varphi], \forall (x,t)\in\R^N\times[0,T].
\end{equation*}
\end{lemma}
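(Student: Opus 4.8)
The plan is to mirror exactly the argument used for the general nonlocal operator in Lemma~\ref{lemma:exp_ineq1}, replacing the increment $z$ by the jump vector $j(x,z)$ throughout and integrating against $\mu$ rather than $\mu_x$. The only analytic ingredient is the elementary convexity inequality $e^y-1\geq y$, valid for all $y\in\R$.

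First I would expand $\mathcal{J}[x,t,v]$ by substituting $v(x,t)=e^{\varphi(x,t)}$ and $Dv(x,t)=e^{\varphi(x,t)}D\varphi(x,t)$, so that
\begin{eqnarray*}
\mathcal{J}[x,t,v] &=& \int_{\mathcal D}\big(e^{\varphi(x+j(x,z),t)}-e^{\varphi(x,t)}-e^{\varphi(x,t)}D\varphi(x,t)\cdot j(x,z)1_B(z)\big)\mu(dz)\\
&=& e^{\varphi(x,t)}\int_{\mathcal D}\big(e^{\varphi(x+j(x,z),t)-\varphi(x,t)}-1-D\varphi(x,t)\cdot j(x,z)1_B(z)\big)\mu(dz).
\end{eqnarray*}
The factorization of $e^{\varphi(x,t)}$ is legitimate because it is a finite positive constant in the variable of integration, and the indicator $1_B(z)$ multiplying the correction term is untouched by the substitution.

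Next I would apply $e^y-1\geq y$ with $y=\varphi(x+j(x,z),t)-\varphi(x,t)$ inside the integral; since $e^{\varphi(x,t)}>0$ the inequality is preserved after multiplying through, giving
$$
\mathcal{J}[x,t,v]\geq e^{\varphi(x,t)}\int_{\mathcal D}\big(\varphi(x+j(x,z),t)-\varphi(x,t)-D\varphi(x,t)\cdot j(x,z)1_B(z)\big)\mu(dz)=v(x,t)\,\mathcal{J}[x,t,\varphi].
$$
One should note that the integrability of both sides is ensured by the L\'evy measure condition $(M_{LI})$ together with $|j(x,z)|\leq C_0|z|$: near the origin a second-order Taylor expansion of $\varphi$ shows the integrand on the right is $O(|z|^2)$, and the exponential integrand on the left is likewise controlled, while away from the origin $\mu$ is finite; this justifies splitting the integral and passing the inequality under the integral sign.

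There is no real obstacle here: the statement is the verbatim Lévy--Itô counterpart of Lemma~\ref{lemma:exp_ineq1}, and the substitution $z\mapsto j(x,z)$ affects neither the pointwise convexity estimate nor the factorization of $e^{\varphi(x,t)}$. The only point requiring a word of care is that the gradient correction term $D\varphi(x,t)\cdot j(x,z)1_B(z)$ appears identically on both sides and thus cancels out of the inequality automatically, so no assumption on the sign or magnitude of $j$ beyond the standing hypothesis $|j(x,z)|\leq C_0|z|$ is needed.
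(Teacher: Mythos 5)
Your proof is correct and follows exactly the approach the paper takes: the paper proves Lemma~\ref{lemma:exp_ineq1} via $e^y-1\geq y$ applied to $y=\varphi(x+z,t)-\varphi(x,t)$ after factoring out $e^{\varphi(x,t)}$, and states Lemma~\ref{lemma:exp_ineq1_LI} without proof, the argument being identical with $z$ replaced by $j(x,z)$ and $\mu_x$ by $\mu$. Your substitution is precisely that translation, so there is nothing to add.
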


\begin{lemma}\label{lemma:exp_ineq2_LI}
For any $\delta\geq 0$ there exists a constant $c=\frac12 e^{-\delta}$ such that $v=e^{\varphi}$ satisfies
\begin{equation*}
 \mathcal{J}[x,t,v]\geq e^{\varphi(x,t)}\cdot [\mathcal{J}[x,t,\varphi]+ c\int_{\mathcal D}(\varphi(x+j(x,z),t)-\varphi(x,t))^2\mu(dz)], 
\end{equation*}
for all $(x,t)\in\R^N\times[0,T],$
where the integral is taken over 
$\mathcal D = \{\varphi(x+j(x,z))-\varphi(x)\geq -\delta\}. $
\end{lemma}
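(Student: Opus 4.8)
The plan is to mirror the proof of Lemma~\ref{lemma:exp_ineq2} almost verbatim, replacing the increment $z$ by the jump $j(x,z)$ and the family $\{\mu_x\}$ by the fixed L\'evy measure $\mu$; since everything takes place pointwise under the integral sign, no new idea is required beyond the elementary exponential inequality already used there.

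First I would record the sharpened convexity estimate
$$ e^y - 1 \geq y + c\, y^2, \qquad \forall\, y \geq -\delta, $$
with $c = \tfrac12 e^{-\delta}$. This is seen by putting $g(y) = e^y - 1 - y - c\, y^2$ and observing that $g(0) = g'(0) = 0$ while $g''(y) = e^y - 2c = e^y - e^{-\delta} \geq 0$ for $y \geq -\delta$, so $g$ is nonnegative on $[-\delta, +\infty)$.

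Next I would expand $\mathcal{J}[x,t,v]$ for $v = e^{\varphi}$ and factor out $e^{\varphi(x,t)}$, exactly as in Lemma~\ref{lemma:exp_ineq1_LI}, to get
$$ \mathcal{J}[x,t,e^{\varphi}] = e^{\varphi(x,t)} \int_{\mathcal D} \big( e^{\varphi(x+j(x,z),t) - \varphi(x,t)} - 1 - D\varphi(x,t)\cdot j(x,z)\, 1_B(z) \big)\, \mu(dz). $$
On $\mathcal D = \{\varphi(x+j(x,z)) - \varphi(x) \geq -\delta\}$ the quantity $y := \varphi(x+j(x,z),t) - \varphi(x,t)$ satisfies $y \geq -\delta$, so the sharpened inequality applies under the integral. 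Substituting it and splitting off the linear term, which reconstitutes $\mathcal{J}[x,t,\varphi]$ computed over $\mathcal D$, from the quadratic term gives
$$ \mathcal{J}[x,t,e^{\varphi}] \geq e^{\varphi(x,t)} \Big[ \mathcal{J}[x,t,\varphi] + c \int_{\mathcal D} \big( \varphi(x+j(x,z),t) - \varphi(x,t) \big)^2\, \mu(dz) \Big], $$
which is precisely the asserted inequality.

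I do not expect any genuine difficulty. The only two points to watch are that the restriction to $\mathcal D$ is exactly what legitimizes the exponential inequality, which fails once $y < -\delta$; and the well-posedness of all the integrals involved, which is guaranteed by the bound $|j(x,z)| \leq C_0 |z|$ together with assumption $(M_{LI})$, since near the origin the integrand behaves like $|j(x,z)|^2 = O(|z|^2)$ while the contribution of $|z| \geq 1$ is controlled by $\int_{\R^N \setminus B} \mu(dz)$ and the local growth of $\varphi$.
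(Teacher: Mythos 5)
Your proof is correct and follows exactly the route the paper intends: the sharpened convexity bound $e^y-1\geq y+\tfrac12 e^{-\delta}y^2$ for $y\geq-\delta$ applied pointwise to $y=\varphi(x+j(x,z),t)-\varphi(x,t)$ after factoring out $e^{\varphi(x,t)}$, mirroring the proof of Lemma~\ref{lemma:exp_ineq2} with $z$ replaced by $j(x,z)$ and $\mu_x$ by $\mu$. The paper leaves this Lévy--Itô case without a written proof precisely because it is the same argument; your verification of the exponential inequality via $g''(y)=e^y-e^{-\delta}\geq 0$ on $[-\delta,\infty)$ together with $g(0)=g'(0)=0$ is a clean way to justify the constant $c$.
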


\paragraph*{\textbf{Acknowledgments}}

The author would like to express her warmest thanks to Professor Guy Barles, whose expert guidance on the topics of this work has proved invaluable and whose careful suggestions helped improving the presentation. Many thanks are addressed to Emmanuel Chasseigne and Cyril Imbert for useful discussions and for their interest in this work. This research is partially financed by the MISS project of Centre National
d'Etudes Spatiales, the Office of Naval research under grant N00014-97-1-0839 and by the European Research Council, advanced grant ``Twelve labours''.

\bibliographystyle{plain}

\end{document}